\documentclass[onefignum,onetabnum]{siamart220329}

\usepackage{aliascnt}

\usepackage{graphicx}
\usepackage{color}
\usepackage{latexsym}

\usepackage{booktabs}
\usepackage{todonotes}
\usepackage{amsfonts}
\usepackage{algpseudocode}
\usepackage{algorithm}
\usepackage{caption}
\usepackage{multirow}
\usepackage{float}
\usepackage{hyperref}
\usepackage[normalem]{ulem}
\usepackage{microtype}  
\usepackage{bm}

\usepackage{enumitem}          

\allowdisplaybreaks[3]          
\setlist{noitemsep,topsep=2pt,parsep=1pt,partopsep=0pt}
\definecolor{darkgreen}{rgb}{0,0.5,0}

\headers{Derivative-Free Bilevel Optimization.}{E. Cesaroni, G. Liuzzi, S. Lucidi}

\newtheorem{ass}{Assumption}
\newtheorem{remark}{Remark}
\newtheorem{prop}{Proposition}

\newtheorem{Teorema}{Theorem}
\renewcommand{\Re}{\mathbb{R}}

\title{Derivative-Free Bilevel Optimization with Inexact Lower-Level Solutions\thanks{Submitted to the editors \today.}}

\author{
Edoardo Cesaroni\thanks{Dipartimento di Ingegneria Informatica, Automatica e Gestionale ``A. Ruberti'', ``Sapienza'' Universit\`a di Roma, Via Ariosto 25, 00185 Rome (Italy) (\email{cesaroni@diag.uniroma1.it}, \email{liuzzi@diag.uniroma1.it}, \email{lucidi@diag.uniroma1.it}).} \and
Giampaolo Liuzzi\footnotemark[2] \thanks{Istituto di Analisi dei Sistemi ed Informatica ``A. Ruberti'', Consiglio Nazionale delle Ricerche, via dei Taurini 19, 00185 Rome (Italy)} \and   
Stefano Lucidi\footnotemark[2]
}

\begin{document}
\maketitle

\begin{abstract}
In this work, we propose derivative-free framework for bilevel optimization. We consider both the upper and lower-level problems with bound constraints on the variables, as well as general nonlinear constraints, assuming that first-order information (in the upper-level) is not available or it is impractical to obtain. The lower-level problem is solved with an accuracy that is progressively refined throughout the optimization process.
We first analyze the case in which the upper-level problem is subject only to bound constraints, establishing convergence to Clarke-Jahn stationary points when the refinement process is allowed to reach its maximum precision. When a limitation is imposed on this refinement process, we prove convergence to approximate stationary points using an extended notion of Goldstein stationarity.
Finally, we extend the proposed framework to handle more complex constraints via an exact penalty function approach, proving convergence to stationary points under suitable assumptions. A comprehensive numerical study on 160 problems from the BOLIB collection shows that the adaptive accuracy strategy consistently yields better results than fixed-precision solves, with its benefits becoming more pronounced as the required lower-level accuracy becomes more stringent.
\end{abstract}

\begin{keywords}
Bilevel Optimization, Derivative-Free Methods, Exact Penalty
\end{keywords}

\begin{MSCcodes}
90C26, 90C30, 90C56
\end{MSCcodes}

\section{Introduction}\label{intro}

In this paper we consider bilevel optimization problems of the form:
\begin{equation}\label{prob:upperlevel}
\begin{aligned}
\text{``$\min_{x}$''} \quad & F(x,y) \\[4pt]
\text{s.t.} \quad 
& H(x,y) \le 0, \\[2pt]
& x \in X := \{\, l_i \le x_i \le u_i,\ i=1,\ldots,n_x \,\}, \\[2pt]
& y \in \mathcal{R}(x),
\end{aligned}
\end{equation}
where
\begin{equation}\label{prob:lowerlevel}
\begin{array}{rl}
	{\cal R}(x) = \displaystyle\arg\min_{z} & f(x,z) \\[6pt]
	\text{s.t.} & z \in {\cal Z} \subseteq \mathbb{R}^{n_y}.
\end{array}
\end{equation}
with $F:\Re^{n_x + n_y}\to\Re$, $H:\Re^{n_x + n_y}\to \Re^m$ (upper-level objective and constraint functions) and $f:\Re^{n_x + n_y}\to\Re$ (lower-level objective) and \( l_i, u_i \in \mathbb{R},
\ i = 1, \ldots, n_x \).
\par\smallskip
The set ${\cal R}(x)$, called the \textit{rational reaction set}, plays a central role in the analysis and solution of bilevel problems. In Problem \eqref{prob:upperlevel}, commonly referred to as the upper-level problem, the optimization is performed only with respect to $x$, the variable controlled by the upper-level player (the leader). When the problem is well-posed, the rational reaction set reduces to a singleton, i.e., ${\cal R}(x) = \{y(x)\}$. The quotation marks around ``$\min$'' emphasize the inherent ambiguity arising when the lower-level problem admits multiple optimal solutions: in such ill-posed cases, the leader cannot uniquely predict the follower's response.
This formulation captures a hierarchical decision-making process between two players. The leader announces a decision $x$, and the follower reacts by selecting a value $y$ that minimizes their own objective $f(x,\cdot)$. For a comprehensive treatment of bilevel formulations, we refer the reader to \cite{Zemkoho2016} and the references therein.
\par\smallskip

Over the past years, many methods have been proposed in the literature for the solution of bilevel problems. Solution methods are both derivative-based (i.e., they take advantage of the information provided by the derivatives of the objective function) and derivative-free (i.e., they do not make use of first order derivatives of the objective function). In the derivative-free context, we are particularly interested in the case where the functions defining the upper-level problem are of the black-box-type. 

Genetic approaches include \cite{hejazi2002linear} for linear bilevel programming via Karush-Kuhn-Tucker (KKT) reformulation, and \cite{wang2005evolutionary} with tailored crossover and constraint-handling operators.
In \cite{colson2004trust}, a trust-region method is proposed for bilevel problems with unknown derivatives and upper-level constraints that do not involve lower-level variables. The lower-level objective is approximated quadratically, while the constraints and upper-level objective are approximated linearly.
Another trust-region algorithm for unconstrained bilevel problems is proposed in \cite{conn2012bilevel}, where the trust-region procedure is applied to both the upper- and lower-level problems.

In \cite{mersha2011direct}, a direct-search method for bilevel problems without upper-level constraints and with a convex lower-level problem is proposed. The method relies on a single-level reformulation based on the optimal lower-level reaction, and its convergence is analyzed in a derivative-free setting.
In \cite{diouane2023inexact}, a derivative-free bilevel framework with a fixed-accuracy approximation of the lower-level solution is proposed, for which the authors establish convergence to \((\delta,\epsilon)\)-Goldstein stationary points \cite{Lin2022,diouane2023inexact}.
\par
The method proposed in this work takes inspiration from the works in \cite{mersha2011direct,diouane2023inexact}. In particular, we start with the same type of single level reformulation in order to design our derivative-free method. However, some significant differences and novelties can be highlighted.
First, we develop derivative-free frameworks for bilevel optimization that handle both bound-constrained and generally constrained upper-level problems. 
Second, a key innovation of our approach is the treatment of inexact lower-level solutions: rather than requiring exact solutions at each upper-level iteration, we allow the lower-level problem to be solved with a tolerance $\zeta$ that can be progressively adapted as the optimization proceeds. This adaptive accuracy strategy significantly reduces computational cost in early iterations while ensuring convergence.
\par\smallskip
The paper has two main parts. The first addresses bilevel problems with only bound constraints at the upper level, introducing our derivative-free method and its initial convergence results. The second extends the framework to general nonlinear upper-level inequality constraints. More specifically, the paper is structured as follows.\\
Section~\ref{sec:boundconstr} presents a linesearch-based derivative-free approach (DFN-LLA) for bound-constrained bilevel problems with lower-level adaptive accuracy and analyzes its convergence under different lower-level accuracy settings. Section~\ref{sec:genconstr} incorporates general nonlinear constraints via an exact penalty approach and extends the convergence analysis. Numerical results are reported in Section~\ref{sec:numerical}, while Section~\ref{sec:conclusions} summarizes the main findings and outlines possible directions for future research.

\subsection{Contributions}
The principal contributions of this work are summarized as follows:
\begin{itemize}
\item We introduce a linesearch-based, derivative-free optimization algorithm for bilevel problems and establish its global convergence to Clarke–Jahn stationary points in the regime where the tolerance associated with the lower-level problem is driven to zero.
\item We investigate the setting in which the lower-level solution accuracy is bound\-ed away from zero and prove convergence to approximate stationary points. These points are characterized via a Goldstein-type stationarity condition.
\item We generalize the proposed frameworks to accommodate general nonlinear constraints at the upper-level by means of an exact penalty function approach, and we prove convergence to Clarke–KKT stationary points (and their approximate counterparts) under appropriate regularity assumptions.
\item We conduct an extensive numerical study on a comprehensive bilevel test set, showing that dynamically adapting the lower-level accuracy yields better performance compared to fixed-precision solves, particularly in regimes requiring stringent lower-level tolerances.
\item We extend the performance and data profile methodology to the bilevel optimization context, enabling a systematic benchmarking of bilevel solvers.
\end{itemize}

\subsection{Assumptions}

We now state the assumptions supporting our analysis, concerning selection mappings for lower-level solutions and regularity of the objectives.
Since ${\cal R}(x)$ may not be a singleton for given values of the upper-level variables $x_i$, $i=1,\dots,n_x$ and in practice only approximate lower-level solutions with tolerance $\zeta > 0$ are available, we assume the existence of two mappings: one selecting an exact solution $y(x) \in {\cal R}(x)$, and one providing an approximate solution $\tilde{y}(x, \zeta)$ within distance $\zeta$ from $y(x)$.

\begin{ass}\label{ass:mapping}
For any $x\in X$ and $\zeta > 0$, there exist two mappings   $y:\Re^{n_x}\to\Re^{n_y}$ and $\tilde y:\Re^{n_x}\times\Re\to\Re^{n_y}$ such that:
\begin{enumerate}
    \item[(a)] $y(x)\in{\cal R}(x)$;
    \item[(b)] $\| \tilde y(x,\zeta)-y(x)\|\leq \zeta$.
\end{enumerate}
\end{ass}

Then, for any $x\in X$ and $\zeta>0$, we denote 

\begin{equation}\label{def:functions}
   \begin{split}
    \bar F(x) & = F(x,y(x)),\\
    \bar H_i(x) & = H_i(x,y(x)),\ i=1,\dots,m\\
    P(x,\zeta) & = F(x,\tilde y(x,\zeta)).
    \end{split} 
\end{equation}

\begin{remark}
Concerning Assumption \ref{ass:mapping}, we note that point (a) is surely satisfied when ${\cal R}(x)$ is a singleton. However, our assumption allows for more general situations in which ${\cal R}(x)$ is not a singleton but it is possible to uniquely identify an element $y(x)\in {\cal R}(x)$ for any given $x$. For instance, we recall two main approaches (see e.g., \cite{Zemkoho2016}):
\begin{itemize}
\item[] Optimistic: in this situation, the $y\in {\cal R}(x)$ is selected which is the best one from the upper-level point of view.
\item[] Pessimistic: this approach tries to bound the possible damage that the worst choice of $y\in {\cal R}(x)$ can cause to the upper-level objective function.    
\end{itemize}
As for point (b), it can be satisfied by assuming that a deterministic algorithm exists which, given an error level $\zeta$ and a solution $y(x)\in{\cal R}(x)$ of the lower-level problem, computes an approximate solution $\tilde y(x,\zeta)$ to the lower-level problem.

Note also that point (a) of Assumption~\ref{ass:mapping} is more general than \cite[Assumption 2.1]{diouane2023inexact}, whereas point (b) is the same as \cite[Assumption 2.2]{diouane2023inexact}. 
\end{remark}

Which approach is suitable depends on the assumed cooperation level between decision makers; when ${\cal R}(x)$ is a singleton both coincide. We now state assumptions on $F$ and $\bar F$.

\begin{ass}\label{ass:Lip}\ \par
    \begin{itemize}
        \item[(i)] The upper-level function $F(x,y)$ is Lipschitz continuous w.r.t. $y$ with constant $L_F$.
        \item[(ii)] The function $\bar F(x) = F(x,y(x))$ is Lipschitz continuous with constant $L_{\bar F}$.
        \item[(iii)] The function $\bar F$ is bounded from below, i.e. $F_{low} > -\infty$ exists such that for all $x\in X$
  \[
  \bar F(x) \geq F_{low}.
  \]
    \end{itemize}
\end{ass}

\begin{remark}\label{Remark}
It is worth noting that points (i) and (ii) of Assumption \ref{ass:Lip} implicitly give some regularity assumption on the mapping $y(x)$.   
Furthermore, for all $x\in X$ and $\zeta >0$, we can write

\[
P(x,\zeta) \geq \bar F(x) - |P(x,\zeta)-\bar F(x)| 
\geq F_{low} - L_F\| \tilde y(x,\zeta)-y(x)\| \geq F_{low} - L_F\zeta,
\]
hence also the function P is bounded from below.

Furthermore, we note that Assumption~\ref{ass:Lip} is standard in the recent literature, see e.g. \cite[Assumptions 2.3--2.5]{diouane2023inexact}.
\end{remark}

We highlight that Assumptions \ref{ass:mapping} and \ref{ass:Lip} hold true throughout the whole paper.

\subsection{Notations and definitions}

Given $v \in \mathbb{R}^{n_x}$, subscripts denote either components ($v_k$) or sequence membership ($\{v_k\}_{k\in\mathbb{N}}$); when ambiguous, $(v)_i$ denotes the $i$-th component. For $a, b \in \mathbb{R}^{n_x}$, $\max\{a, b\}$ and $\min\{a, b\}$ are component-wise.
We denote by $S(0,1) = \{ x \in \mathbb{R}^{n_x} : \|x\| = 1 \}$ the unit sphere, by $[x]_{[l,u]} = \max\{l, \min\{u, x\}\}$ the projection onto $X = \{x \in \mathbb{R}^{n_x} : l_i \leq x_i \leq u_i\}$, and by $B_\delta(\bar{x}) = \{ x \in \mathbb{R}^{n_x} : \|\bar{x} - x\| \leq \delta \}$ the closed ball of radius $\delta$ centered at $\bar{x}$.

Since we are working in a (possibly) non-smooth context, we employ notions of stationarity based on Clarke's nonsmooth analysis. We recall the definitions of Clarke's generalized directional derivative and the Clarke-Jahn directional derivative, which account for the presence of  constraints.

\begin{definition}[Clarke and Clarke-Jahn generalized directional derivative]

Given a function $\bar F:\Re^{n_x}\to\Re$ Lipschitz continuous near a point $\bar x\in\Re^{n_x}$ and a direction $d\in\Re^{n_x}$, the Clarke generalized directional derivative is
\[
\bar F^{Cl}(\bar x,d) = \limsup_{\substack{x\to \bar x,\, t\downarrow 0}} \frac{\bar F(x+td)-\bar F(x)}{t}.
\]

The subdifferential (or generalized gradient) of $\bar F$ at $\bar x$ is the following set
\[
\partial \bar F(\bar x) = \{\xi \in \Re^{n_x}:\ \bar F^{Cl}(\bar x,d)\geq \xi^\top d,\ \forall\ d\in\Re^{n_x}\}.
\]
Given $X$, the Clarke-Jahn directional derivative is defined as
\[
\bar F^{\circ}(\bar x,d) = \limsup_{\substack{x\to \bar x,\, t\downarrow 0 \\ x\in X,\, x+td\in X}} \frac{\bar F(x+td)-\bar F(x)}{t}.
\]
\end{definition}

\begin{definition}[Dense sequence]
Let $K$ be an infinite subset of indices (possibly $K = \{0,1,\ldots\}$). 
The subsequence of normalized directions $\{d_k\}_{k \in K}$ is said to be 
\emph{dense in the unit sphere} $S(0,1)$ if, for any $\bar d \in S(0,1)$ and 
for any $\varepsilon > 0$, there exists an index $k \in K$ such that $\| d_k - \bar d \| \le \varepsilon $.
\end{definition}

\section{Bound-Constrained Upper-Level Problem}\label{sec:boundconstr}

In this section, we consider problem~\eqref{prob:upperlevel}, where the upper-level feasible set is defined solely by bound constraints. This setting serves as the starting point for our methodology, and the convergence results established here will form the foundation for the general framework presented in Section~\ref{sec:genconstr}.

Under Assumption \ref{ass:mapping}(a) the bilevel problem reduces to the following single-level reformulation:
\begin{equation}\label{prob:onlyboxconstraint}
  \min_{x\in X}\; \bar F(x).
\end{equation}

Before introducing our algorithmic framework, we recall the necessary stationarity notions for bound-constrained optimization in a derivative-free setting.

\begin{definition}[Cone of Feasible Directions]
Given \( X \) defined in~\eqref{prob:upperlevel}, the cone of feasible directions at \( x \) w.r.t.\ \( X \) is
\[
D(x) = \bigl\{ d \in \mathbb{R}^{n_x} :\, d_i \geq 0\ \text{if}\ x_i\!=\!l_i,\;
 d_i \leq 0\ \text{if}\ x_i\!=\!u_i,\;
 d_i \!\in\! \mathbb{R}\ \text{otherwise} \bigr\}.
\]
\end{definition}

\begin{definition}[Clarke and Clarke-Jahn stationarity]\label{def:puntostazionario}

Given problem~\eqref{prob:onlyboxconstraint}, a point $x^*\in X$ is Clarke-stationary when

\[
\bar F^{Cl}(x^*,d) = \max\{\xi^\top d:\ \xi\in\partial \bar F(x^*)\} \geq 0,\quad\forall\  d\in D(x^*).
\]

A point $x^*\in X$ is Clarke-Jahn-stationary when
\[
\bar F^{\circ}(x^*,d) \geq 0,\quad\forall\  d\in D(x^*).
\]

\end{definition}

Following the notion of Goldstein stationarity, reported for instance in \cite{Lin2022}, we define Goldstein stationarity for problem~\eqref{prob:onlyboxconstraint}.

\begin{definition}[\((\delta, \epsilon)\)-Goldstein stationarity]\label{def:Goldsteinstazionario}
Given $\delta, \epsilon$ strictly positive scalars, a point \( \bar{x} \in X \) is said to be a \((\delta, \epsilon)\)-Goldstein stationary point for problem~\eqref{prob:onlyboxconstraint} if there exist some \( \tilde{x} \in B_\delta(\bar{x}) \cap X \) and some \( \xi \in \partial \bar{F}(\tilde{x}) \) such that  
\[
\xi^\top d \geq -\epsilon, \quad \forall d \in D(\tilde{x}) \cap S(0,1).
\]

\end{definition}

\subsection{A Linesearch Derivative-free Method for Bilevel Programming}

Since exact evaluations of $\bar F(x) = F(x,y(x))$ would require solving the lower-level problem to optimality at each upper-level iteration, we instead work with the perturbed function $P(x,\zeta) = F(x,\tilde y(x,\zeta))$ defined in \eqref{def:functions}. This leads us to consider the following perturbed problem.
\begin{equation}\label{perturbed_prob:onlyboxconstraint}
  \min_{x\in X}\; P(x,\zeta).
\end{equation}

We now describe our Derivative-Free algorithm for Nonsmooth bilevel optimization problems with Lower-Level Adaptive accuracy (DFN-LLA). It solves the bilevel problem~\eqref{prob:onlyboxconstraint} through the perturbed reformulation~\eqref{perturbed_prob:onlyboxconstraint}. The distinguishing feature of our approach is the adaptive management of the lower-level accuracy through the mapping $\tilde y(x,\zeta)$.
The lower-level tolerance $\zeta$ starts at a relaxed value $\zeta_0$ and progressively decreases to $\bar\zeta\geq 0$ as the upper-level step sizes shrink, avoiding expensive high-accuracy lower-level solves when the upper-level iterate is far from optimality.\par

At each iteration of the algorithm, a search direction $d_k$ and (possibly) its opposite $-d_k$ are examined by
means of the {\it Projected Extrapolation} procedure. The Projected Extrapolation procedure computes a step $\alpha_k\geq 0$ and a direction $\tilde d_k$ (which is either $d_k$ or $-d_k$) given the current iterate $x_k$, the initial stepsize $\tilde\alpha_k$, the direction $d_k$, and the current precision level $\zeta_k$ for the lower-level problem. Specifically:
\begin{itemize}
    \item[(i)] if $\alpha_k > 0$, then point $\tilde x_k = [x_k+\alpha_k\tilde d_k]_{[l,u]}$ achieves sufficient decrease with respect to $x_k$ and the iteration is deemed successful; in this situation, the algorithm sets $\tilde\alpha_{k+1}=\alpha_k$, $\zeta_{k+1}=\zeta_k$ and the new point $x_{k+1}$ such that $P(x_{k+1},\zeta_k) \leq P(\tilde x_k,\zeta_k)$. This flexibility allows for additional local search or refinement strategies to be incorporated without affecting the convergence analysis, as long as non-increase of the objective is maintained (note that a feasible choice is $x_{k+1}=\tilde x_k$);

    \item[(ii)] if $\alpha_k = 0$, then sufficient decrease cannot be attained in either direction $\pm d_k$ with the stepsize $\tilde\alpha_k$ and the iteration is deemed unsuccessful; when this occurs, the algorithm shrinks the stepsize for the next iteration by setting $\tilde \alpha_{k+1} = \max\{\alpha_{\min},\theta\tilde\alpha_k\}$ with a saturation on $\alpha_{\min}$, defines the new iterate $x_{k+1}=x_k$ and checks the condition to trigger the update of the lower-level precision parameter $\zeta_k$. In particular, if $\tilde\alpha_{k+1}^3 < \zeta_k$ then $\zeta_{k+1}$ is reduced according to the rule $\zeta_{k+1} = \max\{\bar\zeta,\min\{\theta\zeta_k,\tilde\alpha_{k+1}^3\}\}$
    otherwise, $\zeta_{k+1}=\zeta_k$.
\end{itemize}
 
The updating rule for the precision parameter, especially at unsuccessful iterations, connects the stepsize with the precision parameter. 
This coupling between step size and accuracy ensures that we do not prematurely tighten the lower-level tolerance while the upper-level search is still making progress with larger steps. On the other hand, it requires using small tolerances if the objective function values are compared at very close points (small stepsize).
\par
As concerns the Projected Extrapolation procedure, this is where the main computations are carried out. The procedure accomplishes a twofold task. On the one hand, it determines whether $d_k$ (or $-d_k$) is a good descent direction, i.e., one along which sufficient decrease can be attained with respect to the initial point. On the other hand, if either $d_k$ or $-d_k$ is a good descent direction, it tries to enlarge the step as much as possible as long as sufficient decrease is guaranteed.
\par\smallskip
Algorithm DFN-LLA and the Projected Extrapolation procedure are reported in the boxes below.

\begin{algorithm}[!htbp]
\caption{DFN--LLA}
\small
\begin{algorithmic}[1]
\State \textbf{Input:}
$\theta \in (0,1)$,
$\sigma \in (0,1)$,
$\bar\zeta \ge 0$,
$x_0 \in X$,
$\{d_k\}$ with $d_k \in \mathbb{R}^{n_x}$ and $\|d_k\| = 1$ for all $k$.

\State $\alpha_{\min} \gets (\sigma \bar\zeta)^{1/3}$, 
Choose $\zeta_0 \ge \bar\zeta$, 
Choose $\tilde\alpha_0 \ge \zeta_0^{1/3}$

\For{$k = 0,1,2,\dots$}

    \State $(\alpha_k,\tilde d_k) \gets 
    \Call{ProjectedExtrapolation}{\tilde\alpha_k,x_k,d_k,\zeta_k}$

    \If{$\alpha_k = 0$} \Comment{unsuccessful iteration}

        \State $\tilde\alpha_{k+1} \gets 
        \max\{\alpha_{\min}, \theta \tilde\alpha_k\}$,
        $\tilde x_k \gets x_k$,
        $x_{k+1} \gets \tilde x_k$

        \If{$\tilde\alpha_{k+1}^3 < \zeta_k$}
            \State $\zeta_{k+1} \gets
            \max\{\bar\zeta,
            \min\{\theta \zeta_k,
            \tilde\alpha_{k+1}^3\}\}$
        \Else
            \State $\zeta_{k+1} \gets \zeta_k$
        \EndIf

    \Else \Comment{successful iteration}

        \State $\tilde\alpha_{k+1} \gets \alpha_k$,
        $\tilde x_k \gets [x_k + \alpha_k \tilde d_k]_{[l,u]}$,
        $\zeta_{k+1} \gets \zeta_k$

        \State Find $x_{k+1} \in X$ such that
        $P(x_{k+1},\zeta_k) \le P(\tilde x_k,\zeta_k)$.

    \EndIf

\EndFor

\State \textbf{Output:}
$\{x_k\}$, $\{\alpha_k\}$, $\{\tilde\alpha_k\}$, $\{\zeta_k\}$.

\end{algorithmic}
\end{algorithm}
\begin{algorithm}[!htbp]
\caption{ProjectedExtrapolation$(\tilde\alpha,x,d,\zeta)$}
\small
\begin{algorithmic}[1]
\State \textbf{Data:} $\gamma > 0$, $\delta \in (0,1)$
\State $\alpha \gets \tilde\alpha$

\If{$P([x+\alpha d]_{[l,u]},\zeta)
    \le P(x,\zeta) - \gamma \alpha^2$}
    \State $p^+ \gets d$
\ElsIf{$P([x-\alpha d]_{[l,u]},\zeta)
    \le P(x,\zeta) - \gamma \alpha^2$}
    \State $p^+ \gets -d$
\Else
    \State $\alpha \gets 0$
    \State \Return $\alpha, d$
\EndIf
\While{true} \Comment{extrapolation phase}
    \State $\beta \gets \alpha/\delta$
    \If{$P([x+\beta p^+]_{[l,u]},\zeta)
        > P(x,\zeta) - \gamma \beta^2$}
        \State \Return $\alpha, p^+$
    \EndIf
    \State $\alpha \gets \beta$
\EndWhile

\end{algorithmic}
\end{algorithm}

Note that, in the initialization of the algorithm, when $\bar\zeta = 0$ it is reasonable to select $\zeta_0 > 0$. Otherwise, we are actually requiring to solve the lower-level problem exactly at every iteration and this in turn makes the analysis with variable precision carried out below somewhat useless.

We remark that the lack of a stopping condition allows us to study the asymptotic convergence properties
of DFN-LLA. 

The following proposition guarantees that the algorithm DFN-LLA is well-defined meaning particularly that the Projected Extrapolation cannot infinitely cycle.

\begin{prop}\label{prop:nocycle_projectedLS}
The {\it Projected Extrapolation} cannot cycle indefinitely between Step 11 and Step 17.
\end{prop}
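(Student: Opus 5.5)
Argue by contradiction: suppose the extrapolation loop between Step 11 and Step 17 never returns. Then at every pass the test at Step 13 fails, so for the fixed data $x$, $\zeta$, $p^+$ we obtain an infinite sequence of steps $\beta_j = \tilde\alpha/\delta^{j}$, $j=1,2,\dots$, with $\beta_j\to+\infty$ because $\delta\in(0,1)$ and $\tilde\alpha>0$. (The procedure is entered with $\tilde\alpha>0$: the initialization takes $\tilde\alpha_0\ge\zeta_0^{1/3}$, and the updates keep $\tilde\alpha_k>0$ when $\zeta_0>0$. If $\tilde\alpha=0$, the loop keeps $\alpha=0$ and this degenerate case is ruled out by the choices in the initialization.) Failure of the return test means
\[
P([x+\beta_j p^+]_{[l,u]},\zeta) \le P(x,\zeta) - \gamma \beta_j^2 \qquad \text{for all } j.
\]
Letting $j\to\infty$, the right-hand side tends to $-\infty$, since $P(x,\zeta)$ is a fixed finite number and $\gamma>0$.

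The contradiction then comes from the lower bound on $P$ in Remark~\ref{Remark}. That remark gives $P(z,\zeta)\ge F_{low}-L_F\zeta$ for all $z\in X$, and it relies on Assumption~\ref{ass:Lip}(i),(iii) together with Assumption~\ref{ass:mapping}(b). The points $[x+\beta_j p^+]_{[l,u]}$ are projections onto $X$, so they belong to $X$. Hence the left-hand side is bounded below by $F_{low}-L_F\zeta$ uniformly in $j$, while the right-hand side diverges to $-\infty$. This is impossible, so the loop terminates after finitely many steps.

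An alternative uses compactness: $X$ is a bounded box, so for large $\beta$ the projected points stabilize at a fixed vertex-like point. One could also argue via continuity, but $P(\cdot,\zeta)$ need not be continuous, because $\tilde y$ is arbitrary. The lower-boundedness argument above avoids this issue, so it is the route to take.

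The only point needing care is the parameter bookkeeping. One must ensure that $\zeta$ and $x$ stay fixed inside the procedure, which is immediate, and that $\tilde\alpha>0$ so the $\beta_j$ actually diverge; this follows from the update rules. There is no real obstacle beyond this.
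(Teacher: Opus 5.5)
Your proof is correct and follows the paper's argument. If the loop never exits, then $P([x+\beta_j p^+]_{[l,u]},\zeta)\le P(x,\zeta)-\gamma\beta_j^2$ along the unbounded sequence $\beta_j=\tilde\alpha/\delta^j$, which contradicts the lower bound on $P(\cdot,\zeta)$ over $X$ that follows from Assumption~\ref{ass:Lip}(iii) via Remark~\ref{Remark}. You are slightly more careful than the paper in making explicit that $\beta_j\to\infty$ requires $\tilde\alpha>0$ and that the projected points lie in $X$.
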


\textbf{Proof.} Let us consider the {\it Projected Extrapolation}. We proceed by contradiction, assuming that an infinite monotonically increasing sequence of positive numbers \(\{\beta_j\}\) exists such that  
\[
P([x + \beta_j p^+]_{[l,u]}, \zeta) \leq P(x, \zeta) - \gamma \beta_j^2.
\]
The above relation contradicts Assumption \ref{ass:Lip}(iii). $\hfill\Box$
\par\smallskip
Before delving further into the convergence analysis of the algorithm, we need to study the properties of the sequence of precision parameters $\{\zeta_k\}$. In particular, in the following proposition, we show that either the precision parameter stays fixed from a certain iteration on, or a special condition between iterations $k$ and $k-1$ is respected.
 
\begin{prop}\label{prop:boundzeta_complete} 
Let $\{\zeta_k\}$ and $\{\tilde\alpha_k\}$ be the sequences produced by algorithm DFN-LLA. 
\begin{itemize}
    \item[(i)] If ${\bar k}$ exists such that $\zeta_{\bar k} = \bar\zeta$, then $\zeta_k = \bar\zeta$, for all $k \geq \bar k$;
    \item[(ii)] otherwise, i.e.,\ $\zeta_{k} > \bar \zeta$ for all $k \geq 1$, then $\zeta_{k} \leq \min\{\zeta_{k-1},\tilde\alpha_{k}^3\}$.

\end{itemize}

\end{prop}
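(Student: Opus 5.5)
The plan is to first record two structural facts about $\{\zeta_k\}$ that follow directly from the update rules of DFN-LLA: $\zeta_k$ is nonincreasing and $\zeta_k \ge \bar\zeta$ for all $k$. Both are proved by induction on $k$. At $k=0$ we have $\zeta_0\ge\bar\zeta$ by the choice made at initialization. For the inductive step there are two kinds of iteration.
\begin{itemize}
\item At a successful iteration, and at an unsuccessful iteration where the test $\tilde\alpha_{k+1}^3<\zeta_k$ fails, we simply have $\zeta_{k+1}=\zeta_k$.
\item Otherwise $\zeta_{k+1}=\max\{\bar\zeta,\min\{\theta\zeta_k,\tilde\alpha_{k+1}^3\}\}$. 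This value is $\ge\bar\zeta$, and it is $\le\max\{\bar\zeta,\theta\zeta_k\}\le\zeta_k$ because $\zeta_k\ge\bar\zeta$.
\end{itemize}

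Part (i) then follows at once. Suppose $\zeta_k=\bar\zeta$ for some $k\ge\bar k$. If $\zeta$ is not updated, nothing changes. If it is updated, $\min\{\theta\bar\zeta,\tilde\alpha_{k+1}^3\}\le\theta\bar\zeta\le\bar\zeta$, so $\zeta_{k+1}=\max\{\bar\zeta,\cdot\}=\bar\zeta$. Induction gives $\zeta_k=\bar\zeta$ for all $k\ge\bar k$.

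For part (ii), the inequality $\zeta_k\le\zeta_{k-1}$ is already the monotonicity shown above. The remaining content is $\zeta_k\le\tilde\alpha_k^3$. I would prove the slightly stronger claim that $\zeta_k\le\tilde\alpha_k^3$ for all $k\ge 0$, by induction, using the hypothesis $\zeta_k>\bar\zeta$ for $k\ge1$.

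The base case is the initialization choice $\tilde\alpha_0\ge\zeta_0^{1/3}$. For the step, assume $\zeta_k\le\tilde\alpha_k^3$ and split into three cases.
\begin{itemize}
\item \emph{Successful iteration.} We have $\zeta_{k+1}=\zeta_k$ and $\tilde\alpha_{k+1}=\alpha_k$. The Projected Extrapolation starts from $\alpha=\tilde\alpha_k$ and only ever replaces $\alpha$ by $\beta=\alpha/\delta>\alpha$, so $\alpha_k\ge\tilde\alpha_k$. Hence $\zeta_{k+1}\le\tilde\alpha_k^3\le\tilde\alpha_{k+1}^3$.
\item \emph{Unsuccessful iteration, test $\tilde\alpha_{k+1}^3<\zeta_k$ fails.} Then $\zeta_{k+1}=\zeta_k\le\tilde\alpha_{k+1}^3$ directly.
\item \emph{Unsuccessful iteration, test passes.} Then $\zeta_{k+1}=\max\{\bar\zeta,\min\{\theta\zeta_k,\tilde\alpha_{k+1}^3\}\}$. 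Since $\zeta_{k+1}>\bar\zeta$ by hypothesis, the max is attained by its second argument, so $\zeta_{k+1}=\min\{\theta\zeta_k,\tilde\alpha_{k+1}^3\}\le\tilde\alpha_{k+1}^3$.
\end{itemize}

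The only delicate points are two. First, in case (ii) the inductive hypothesis must be anchored at $k=0$, where $\zeta_0>\bar\zeta$ is not assumed; the initialization $\tilde\alpha_0^3\ge\zeta_0$ supplies this regardless. Second, one must observe that the extrapolation can only enlarge the step, so that $\tilde\alpha$ never decreases at successful iterations. I expect this monotonicity of $\tilde\alpha$ at successful iterations to be the step most worth stating explicitly; everything else is a direct reading of the update rules.
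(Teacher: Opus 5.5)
Your proof is correct and follows essentially the same route as the paper. Both use an induction anchored at the initialization $\tilde\alpha_0^3\ge\zeta_0$, the same three-way case split on the update rule, and the observation that $\alpha_k\ge\tilde\alpha_k$ at successful iterations. Proving monotonicity and $\zeta_k\ge\bar\zeta$ once up front, and then inducting only on $\zeta_k\le\tilde\alpha_k^3$, is a slightly cleaner arrangement of the same argument; the paper only sketches the lower bound $\zeta_k\ge\bar\zeta$ in part (i).
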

\textbf{Proof.} Recall that, by the instructions of the algorithm, $\zeta_{k+1}$ is defined according to the following rule:
\[
\zeta_{k+1} = \begin{cases}
\max\{\bar\zeta,\min\{\theta\zeta_k,\tilde\alpha_{k+1}^3\}\}
& \ \text{if}\ \alpha_k=0,\ \tilde\alpha_{k+1}^3< \zeta_k \\
\zeta_k & \ \text{if}\ \alpha_k=0,\ \tilde\alpha_{k+1}^3 \geq \zeta_k \\
\zeta_k & \ \text{if}\ \alpha_k>0. 
\end{cases}
\]
Now, we separately consider two cases (i) and (ii). 

Case (i) follows from the fact that $\zeta_{k+1} \leq \max\{\bar\zeta,\zeta_k\}$; hence, the sequence $\{\zeta_k\}$ is monotone non-increasing.

Now, we consider case (ii). We proceed by induction on the iteration index. From the initialization of the algorithm, we have that 
$\tilde\alpha_0 \geq \zeta_0^{1/3}$ and, by assumption, $\zeta_1>\bar\zeta$. Then, we have
\[
\zeta_1 = \begin{cases}
\min\{\theta\zeta_0,\tilde\alpha_1^3\}
\leq \min\{\zeta_0,\tilde\alpha_1^3\}& \ \text{if}\ \alpha_0=0,\ \tilde\alpha_1^3< \zeta_0 \\
\zeta_0 \leq \min\{\zeta_0,\tilde\alpha_1^3\}& \ \text{if}\ \alpha_0=0,\ \tilde\alpha_{1}^3 \geq \zeta_0 \\
\zeta_0 \leq \min\{\zeta_0,\tilde\alpha_1^3\}& \ \text{if}\ \alpha_0>0,   
\end{cases}
\]
where the last case follows considering that if $\alpha_0 > 0$, we have that $\tilde\alpha_1 = \alpha_0\geq\tilde\alpha_0$. So that $\tilde\alpha_1^3 \geq \tilde\alpha_0^3 \geq \zeta_0$. Then
\[
\zeta_1 = \zeta_0 = \min\{\zeta_0,\tilde\alpha_1^3\},\ \text{when}\ \alpha_0>0.
\]
Hence, when $k=0$, it is proved that
\[
\zeta_1 \leq \min\{\zeta_0,\tilde\alpha_1^3\}.
\]

Now, assuming that $\zeta_k\leq  \min\{\zeta_{k-1},\tilde\alpha_k^3\}$, we show that $\zeta_{k+1} \leq  \min\{\zeta_k,\tilde\alpha_{k+1}^3\}$ for all $k \geq1$ such that $\zeta_{k+1} > \bar \zeta$. 
\[
\zeta_{k+1} = \begin{cases}
\min\{\theta\zeta_k,\tilde\alpha_{k+1}^3\}
\leq \min\{\zeta_k,\tilde\alpha_{k+1}^3\}& \ \text{if}\ \alpha_k=0,\ \tilde\alpha_{k+1}^3< \zeta_k \\
\zeta_k \leq \min\{\zeta_k,\tilde\alpha_{k+1}^3\}& \ \text{if}\ \alpha_k=0,\ \tilde\alpha_{k+1}^3 \geq \zeta_k \\
\zeta_k \leq \min\{\zeta_k,\tilde\alpha_{k+1}^3\}& \ \text{if}\ \alpha_k>0,   
\end{cases}
\]
where, the last case follows by considering that $\tilde\alpha_{k+1} = \alpha_k \geq \tilde\alpha_{k}$ and $\tilde\alpha_k^3\geq\zeta_k$ by the inductive hypothesis. This concludes the proof.$\hfill\Box$

\subsection{Convergence analysis for the exact case (\(\bar\zeta = 0\))}

We first consider the case $\bar{\zeta} = 0$, where the algorithm eventually solves the lower-level exactly. The following proposition shows that the adaptive process drives $\zeta_k$ to zero.

\begin{prop}\label{prop:error_to_zero}
Let $\{\zeta_k\}$ be the sequence generated by the DFN-LLA algorithm when $\zeta_0 > \bar\zeta = 0$. Then,
\[
\lim_{k\rightarrow \infty} \zeta_k = 0.
\]
\end{prop}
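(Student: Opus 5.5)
With $\bar\zeta=0$ we have $\alpha_{\min}=(\sigma\bar\zeta)^{1/3}=0$. Hence at unsuccessful iterations $\tilde\alpha_{k+1}=\theta\tilde\alpha_k$, and $\zeta_{k+1}=\min\{\theta\zeta_k,\tilde\alpha_{k+1}^3\}$ whenever $\tilde\alpha_{k+1}^3<\zeta_k$; otherwise $\zeta_{k+1}=\zeta_k$. In every case $\zeta_{k+1}\le\zeta_k$, so $\{\zeta_k\}$ is monotone non-increasing and bounded below by $0$. It therefore converges to some $\hat\zeta\ge 0$. I will argue by contradiction, assuming $\hat\zeta>0$.

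\textbf{Step 1: only finitely many reductions.} Each reduction multiplies $\zeta_k$ by at most $\theta$. Under $\hat\zeta>0$ this can happen only finitely many times, so there is $k_1$ with $\zeta_k=\hat\zeta$ for all $k\ge k_1$. Then, for every unsuccessful iteration $k\ge k_1$, the reduction test must fail, i.e. $\tilde\alpha_{k+1}^3\ge\hat\zeta$. By Proposition~\ref{prop:boundzeta_complete}(ii) (since $\zeta_k>0=\bar\zeta$ for all $k$), we also have $\tilde\alpha_k^3\ge\zeta_k=\hat\zeta$ for all $k\ge1$. Thus the stepsizes stay bounded away from zero: $\tilde\alpha_k\ge\hat\zeta^{1/3}>0$ for all $k$.

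\textbf{Step 2: the tested stepsizes must go to zero.} For $k\ge k_1$ the precision is fixed, so the algorithm is a standard derivative-free linesearch on the fixed function $P(\cdot,\hat\zeta)$. At successful iterations,
\[
P(x_{k+1},\hat\zeta)\le P(\tilde x_k,\hat\zeta)\le P(x_k,\hat\zeta)-\gamma\alpha_k^2 ,
\]
and at unsuccessful ones $x_{k+1}=x_k$. So $\{P(x_k,\hat\zeta)\}$ is non-increasing. By Remark~\ref{Remark} it is bounded below by $F_{low}-L_F\hat\zeta$, hence $\sum_{k\ge k_1}\gamma\alpha_k^2<\infty$ and $\alpha_k\to0$ along successful iterations.

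Since $\tilde\alpha_{k+1}=\alpha_k$ after a success and $\tilde\alpha_{k+1}=\theta\tilde\alpha_k$ after a failure, I separate two cases. If there are infinitely many successful iterations, then $\tilde\alpha_{k+1}=\alpha_k\to0$ along them. This contradicts $\tilde\alpha_k\ge\hat\zeta^{1/3}$. If instead all iterations are eventually unsuccessful, then $\tilde\alpha_k=\theta^{k-k_2}\tilde\alpha_{k_2}\to0$, again contradicting Step~1. Either way $\hat\zeta>0$ is impossible, so $\zeta_k\to0$.

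The main obstacle is the case with infinitely many successes: one must ensure that the stepsize lower bound $\tilde\alpha_k^3\ge\zeta_k$ applies at those iterations too. This is exactly what Proposition~\ref{prop:boundzeta_complete}(ii) provides, combined with summability of $\alpha_k^2$ once $\zeta$ is frozen. That summability uses boundedness below of $P(\cdot,\hat\zeta)$, which holds because the sufficient decrease is measured for a fixed $\zeta$ after $k_1$.
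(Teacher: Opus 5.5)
Your proof is correct and follows essentially the same route as the paper. Both arguments use monotonicity of $\{\zeta_k\}$, finitely many reductions when $\hat\zeta>0$, the lower bound $\tilde\alpha_k^3\ge\zeta_k\ge\hat\zeta$ from Proposition~\ref{prop:boundzeta_complete}, and a split into finitely versus infinitely many successes, where the latter contradicts boundedness below of $P(\cdot,\hat\zeta)$. The paper skips your summability step by using $\alpha_k=\tilde\alpha_{k+1}\ge\hat\zeta^{1/3}$ directly to get a decrease of $\gamma\hat\zeta^{2/3}$ per success, and in Step 1 you should write $\tilde\alpha_k^3\ge\zeta_k\ge\hat\zeta$, since $\zeta_k=\hat\zeta$ holds only for $k\ge k_1$.
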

\textbf{Proof.} By the instructions of the Algorithm and considering its initialization, the sequence of positive numbers $\{\zeta_k\}$ is monotone non-increasing. Then $\lim_{k\to\infty}\zeta_k = \hat\zeta\geq 0$. 
Assume by contradiction that $\hat\zeta>0$. Then since $\{\zeta_k\}$ is monotone non-increasing than 
\begin{equation}\label{diseq:absurd2}
    \zeta_k \geq \hat\zeta \ \text{for all}\ k \in \{0,1,2,\dots\}
\end{equation}
By Proposition~\ref{prop:boundzeta_complete}, we have that 
\begin{equation}\label{absurd1}
 \hat\zeta \leq \tilde\alpha_{k+1}^3,\qquad k \in \{0,1,2,\dots\}.
\end{equation}

Now, let us split the iteration sequence $\{0,1,2,\dots\}$, in two subsets $\cal S$ (i.e., successful iteration) and $\cal U$ (i.e., unsuccessful iteration) where:

\begin{itemize}
    \item[] \( \mathcal{S} = \{k \in \mathbb{N}_0 : ~\tilde{\alpha}_{k+1} = \alpha_k > 0, ~\zeta_{k+1} = \zeta_k \} \).
    \item[] \( \mathcal{U} = \mathcal{U}_1 \cup \mathcal{U}_2 \), where:
    \begin{itemize}
        \item[] $\mathcal{U}_1 = \{ k \in \mathbb{N}_0 : ~\tilde{\alpha}_{k+1} = \theta \tilde{\alpha}_{k},~\alpha_k = 0, ~\zeta_{k+1} = \zeta_k \}$
        \item[] $\mathcal{U}_2 = \{ k \in \mathbb{N}_0 : ~\tilde{\alpha}_{k+1} = \theta \tilde{\alpha}_{k},~\alpha_k = 0, ~\zeta_{k+1} = \min\{\theta\zeta_k,\tilde\alpha_{k+1}^3\} \leq \theta\zeta_k \}$.
    \end{itemize}
\end{itemize}

If $\mathcal{U}_2$ is infinite then for $k\in \mathcal{U}_2$, and sufficiently large, $\zeta_{k+1} < \hat\zeta$, contradicting \eqref{diseq:absurd2}. Then, ${\cal U}_2$ must be finite, hence, for $k$ sufficiently large, either $k\in\cal S$ or $k\in{\cal U}_1$. Thus, an index $\hat k$ exists such that for all $k\geq\hat k$, $\zeta_{k+1} = \zeta_k = \zeta_{\hat k} = \hat \zeta$.

If $\cal S$ is composed of a finite number of elements, this means that $k\in{\cal U}_1$ for all $k$ sufficiently large. In this case, for $k\in {\cal U}_1$, and sufficiently large, $\tilde\alpha_{k+1} < \hat\zeta^{1/3}$, contradicting (\ref{absurd1}).

Conversely, let us suppose that $\cal S$ is composed of an infinite number of elements. In this case, when $k \in \cal S$ and $k\geq\hat k$, we have
\begin{eqnarray}\label{last_1_1}
&&P(x_{k+1}, \hat \zeta)\leq P([x_k+\alpha_kd_k]_{[l,u]}, \hat \zeta)\leq \\ \nonumber
&&P(x_k,\hat \zeta)-\gamma(\alpha_k)^2\leq P(x_k,\hat \zeta)-\gamma \hat\zeta^{2/3},
\end{eqnarray}
where the last inequality is obtained by recalling \eqref{absurd1} and that $\alpha_k=\tilde\alpha_{k+1}$ when $k\in \cal S$. When $k\in \mathcal{U}_1$ and $k\geq\bar k$, by the instructions of the algorithm, we have that
\begin{equation}\label{last_1_2}
P(x_{k+1}, \hat \zeta)  = P(x_k,\hat \zeta).
\end{equation}
Hence, by \eqref{last_1_1} and \eqref{last_1_2}, the sequence $\{P(x_k,\hat \zeta)\}$ is monotonically non-increasing. Furthermore, by (\ref{last_1_1}), on the (infinite) subsequence corresponding to $\cal S$ the objective function $P$ decreases of a constant non-zero quantity. This would imply that the function $P(x,\hat \zeta) = F(x,\tilde y(x,\hat \zeta))$ is unbounded from below contradicting Assumption \ref{ass:Lip}(iii) and Remark \ref{Remark} thus concluding the proof.
$\hfill\Box$
\par\smallskip
We next show that the step sizes also vanish.

\begin{prop}\label{prop:alphastozero}
Let $\{\alpha_k\}$ and $\{\tilde\alpha_k\}$ be the sequences of actual and tentative step sizes generated by the DFN-LLA algorithm when $\zeta_0 > \bar\zeta = 0$. Then,
\[
\lim_{k \to \infty} \max\{\alpha_k, \tilde\alpha_k\} = 0.
\]
\end{prop}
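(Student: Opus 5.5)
The plan is to prove first that $\alpha_k\to 0$ along the successful iterations, using a sufficient-decrease/telescoping argument on a suitable merit sequence. Afterwards I transfer the result to $\tilde\alpha_k$ through the updating rules of DFN-LLA. Note that, since $\bar\zeta=0$, the algorithm sets $\alpha_{\min}=(\sigma\bar\zeta)^{1/3}=0$. Hence on unsuccessful iterations $\tilde\alpha_{k+1}=\theta\tilde\alpha_k$ and $\alpha_k=0$. I reuse the partition $\mathcal S$, $\mathcal U_1$, $\mathcal U_2$ from the proof of Proposition~\ref{prop:error_to_zero}.

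\textbf{Step 1 (merit sequence with varying precision).} The difficulty, and what I expect to be the main obstacle, is that the sufficient decrease conditions compare values of $P(\cdot,\zeta_k)$ with changing $\zeta_k$. I therefore study $\Phi_k=P(x_k,\zeta_k)$ and treat each type of iteration separately.
\begin{itemize}
\item For $k\in\mathcal S$: $\zeta_{k+1}=\zeta_k$, so by the Projected Extrapolation test and step 13 of the algorithm, $\Phi_{k+1}\le \Phi_k-\gamma\alpha_k^2$.
\item For $k\in\mathcal U_1$: $\Phi_{k+1}=\Phi_k$.
\item For $k\in\mathcal U_2$: $x_{k+1}=x_k$, and by Assumption~\ref{ass:Lip}(i) together with Assumption~\ref{ass:mapping}(b),
\[
\Phi_{k+1}\le \bar F(x_k)+L_F\zeta_{k+1}\le \Phi_k+L_F(\zeta_k+\zeta_{k+1})\le \Phi_k+2L_F\zeta_k .
\]
\end{itemize}

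\textbf{Step 2 (summability of the perturbations).} The sequence $\{\zeta_k\}$ is nonincreasing, and at each $k\in\mathcal U_2$ it satisfies $\zeta_{k+1}\le\theta\zeta_k$. Listing the indices of $\mathcal U_2$ as $k_1<k_2<\dots$, we get $\zeta_{k_{j+1}}\le\zeta_{k_j+1}\le\theta\zeta_{k_j}$. Hence
\[
\sum_{k\in\mathcal U_2}\zeta_k\le \frac{\zeta_0}{1-\theta}.
\]
Summing the relations of Step 1 from $0$ to $N$ then gives
\[
\gamma\sum_{k\in\mathcal S,\,k\le N}\alpha_k^2\le \Phi_0-\Phi_{N+1}+\frac{2L_F\zeta_0}{1-\theta}.
\]
By Remark~\ref{Remark}, $\Phi_{N+1}\ge F_{low}-L_F\zeta_{N+1}\ge F_{low}-L_F\zeta_0$, so the right-hand side is bounded uniformly in $N$. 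Therefore $\sum_{k\in\mathcal S}\alpha_k^2<\infty$, so $\alpha_k\to0$ on $\mathcal S$. Since $\alpha_k=0$ off $\mathcal S$, we obtain $\alpha_k\to 0$ along the whole sequence.

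\textbf{Step 3 (tentative steps).} If $\mathcal S$ is finite, then eventually every iteration is unsuccessful. In that case $\tilde\alpha_{k+1}=\theta\tilde\alpha_k$ for all large $k$, and $\tilde\alpha_k\to0$ geometrically. If $\mathcal S$ is infinite, fix $k$ larger than the first successful index and let $m_k<k$ be the last successful index before $k$. Then
\[
\tilde\alpha_k=\theta^{\,k-m_k-1}\alpha_{m_k}\le \alpha_{m_k}.
\]
Since $m_k\to\infty$ and $\alpha_{m_k}\to0$ by Step 2, we get $\tilde\alpha_k\to0$.

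Combining Steps 2 and 3 yields $\max\{\alpha_k,\tilde\alpha_k\}\to0$, which is the claim of Proposition~\ref{prop:alphastozero}. The only delicate point is the bound on the $\mathcal U_2$ perturbations in Step 2. If the geometric bound there failed, the fallback would be to work directly with $\bar F(x_k)$ as the merit function, but that introduces an error $2L_F\zeta_k$ at every successful iteration and would require summability of $\{\zeta_k\}$ over $\mathcal S$, which is not available in general.
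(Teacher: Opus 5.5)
Your proof is correct, but Steps 1--2 take a genuinely different route from the paper. Step 3 is essentially the paper's argument for $\tilde\alpha_k$.

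\textbf{The paper's route.} The paper uses the true objective $\bar F(x_k)$ as merit function. On each $k\in\mathcal S$ it obtains $\bar F(x_{k+1})\le \bar F(x_k)+2L_F\zeta_k-\gamma\alpha_k^2$. It then absorbs the error $2L_F\zeta_k$ into the sufficient-decrease term using two facts: the coupling $\zeta_k\le\min\{\zeta_{k-1},\tilde\alpha_k^3\}\le\min\{\zeta_{k-1},\alpha_k^3\}$ from Proposition~\ref{prop:boundzeta_complete}, and $\zeta_k\to0$ from Proposition~\ref{prop:error_to_zero}. A case analysis shows $2L_F\min\{\zeta_{k-1},\alpha_k^3\}\le\gamma\alpha_k^2$ for large $k$. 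Hence $\{\bar F(x_k)\}$ is eventually nonincreasing and convergent, and squeezing the differences gives $\alpha_k\to0$ on $\mathcal S$.

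\textbf{Your route.} You work with $P(x_k,\zeta_k)$ instead. It decreases by at least $\gamma\alpha_k^2$ on $\mathcal S$ and is perturbed only on $\mathcal U_2$. There the contraction $\zeta_{k+1}\le\theta\zeta_k$ makes the perturbations geometrically summable.

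\textbf{Comparison.} Your argument is more self-contained. It needs neither Proposition~\ref{prop:error_to_zero} nor the cubic coupling $\zeta_k\le\tilde\alpha_k^3$. It gives the quantitative conclusion $\sum_{k\in\mathcal S}\alpha_k^2<\infty$. It would also work for any update rule that shrinks $\zeta$ by a fixed factor whenever it changes. The paper's argument instead gives eventual monotonicity of the true upper-level objective $\bar F$ along the iterates, a statement about the bilevel problem itself rather than the perturbed surrogate. It also exercises the step-size/accuracy coupling that Theorem~\ref{prop:mainconv_perturbed} needs anyway.

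\textbf{Your closing remark.} It is too pessimistic. The $\bar F$-based approach does not require summability of $\{\zeta_k\}$ over $\mathcal S$. Because $\zeta_k\le\alpha_k^3$ on $\mathcal S$ and $\zeta_{k-1}\to0$, each error term $2L_F\zeta_k$ is eventually dominated by $\gamma\alpha_k^2$ individually.
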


\textbf{Proof.} First of all, note that without loss of generality $L_F$  of Assumption \ref{ass:Lip}(i) can be considered to be greater than 1. Then, since $\gamma\in(0,1)$, it always holds that $\gamma < 2L_F$. 

Now, recall, by Proposition \ref{prop:error_to_zero}, that $\{\zeta_k\}$ is such that 
\[
\lim_{k\to\infty}\zeta_k = 0.
\]
Let us define the following two set of iteration indices
\begin{itemize}
    \item[]${\cal S}=\{k \in \mathbb{N}_0:\alpha_k > 0\}$;
    \item[]${\cal U}=\{k  \in \mathbb{N}_0:\alpha_k = 0\}$.
\end{itemize}
When $k\in {\cal S}$, we have
\[
P(x_{k+1},\zeta_k) \leq P([x_k+\alpha_k\tilde d_k]_{[l,u]},\zeta_k) \leq P(x_k,\zeta_k) -\gamma(\alpha_k)^2.
\]
Moreover, by Assumption \ref{ass:Lip}(i) and Assumption \ref{ass:mapping}, we can write
\begin{align*}
P(x_{k+1},\zeta_k) &\geq F(x_{k+1},y(x_{k+1})) - |P(x_{k+1},\zeta_k) - F(x_{k+1},y(x_{k+1}))| \\
&\geq F(x_{k+1},y(x_{k+1})) - L_F\|\tilde y(x_{k+1},\zeta_k)-y(x_{k+1})\|\\
&\geq F(x_{k+1},y(x_{k+1})) - L_F \zeta_k
\end{align*}
and
\begin{align*}
P(x_{k},\zeta_k) &= F(x_k,\tilde y(x_k,\zeta_k)) \leq F(x_k,y(x_k)) + |P(x_k,\zeta_k) - F(x_k,y(x_k))| \\
&\leq F(x_k,y(x_k)) + L_F \zeta_k.
\end{align*}
Thus, by Proposition \ref{prop:boundzeta_complete} and recalling that $\alpha_k\geq\tilde\alpha_k$ when $k\in {\cal S}$, we can write
\begin{align*}
F(x_{k+1},y(x_{k+1})) &\leq F(x_k,y(x_k)) + 2L_F \zeta_k -\gamma\alpha_k^2\\
&\leq F(x_k,y(x_k)) + 2L_F \min\{\zeta_{k-1},\tilde\alpha_k^3\} -\gamma\alpha_k^2 \\
&\leq F(x_k,y(x_k)) + 2L_F \min\{\zeta_{k-1}, \alpha_k^3\} -\gamma\alpha_k^2.
\end{align*}

Now, as $k \to \infty$, if the set $\mathcal{S}$ is finite, then there exists an index $\hat{k}$ such that $k \in \mathcal{U}$ for all $k \geq \hat{k}$. Hence, we conclude that
\[
\lim_{k \to \infty} \alpha_k = 0.
\]

If instead the set $\mathcal{S}$ is infinite, since $\zeta_k \to 0$, we can consider sufficiently large indices $k \in \mathcal{S}$ such that
\[
  \zeta_{k-1} < \min\left\{\left(\frac{\gamma}{2L_F}\right)^3,1\right\} = \left(\frac{\gamma}{2L_F}\right)^3 < 1.
\]
Then, we consider the following cases.
\begin{enumerate}
    \item if $\min\{\zeta_{k-1},\alpha_k^3\}=\alpha_k^3<\zeta_{k-1}$. Then, $\alpha_k< \gamma/(2L_F)$ that is 
    $2L_F\alpha_k^3 < \gamma\alpha_k^2$ so that
    \[
    F(x_{k+1},y(x_{k+1})) < F(x_{k},y(x_{k}));
    \]
    \item if  $\min\{\zeta_{k-1},\alpha_k^3\}=\zeta_{k-1}$ we consider the following two subcases:
    \begin{itemize}
    \item[(i)] $\alpha_k \leq \gamma/(2L_F)\leq 1$. In this case $2L_F \min\{\zeta_{k-1},\alpha_k^3\} \leq 2L_F\alpha_k^3 \leq \gamma\alpha_k^2$, so that
        \[
    F(x_{k+1},y(x_{k+1})) \leq F(x_{k},y(x_{k}));
    \]
    \item[(ii)] $\gamma/(2L_F)< \alpha_k$. In this case we have $2L_F \min\{\zeta_{k-1},\alpha_k^3\} \leq 2L_F\zeta_{k-1} < \gamma^3/(2L_F)^2 < \gamma\alpha_k^2$, hence
        \[
    F(x_{k+1},y(x_{k+1}))< F(x_{k},y(x_{k})).
    \]
    \end{itemize}
\end{enumerate}

When $k\in {\cal U}$, i.e. the iterations of failure, we have $F(x_{k+1},y(x_{k+1})) = F(x_k,y(x_k))$. Thus, the sequence $\{F(x_k,y(x_k))\}$ is eventually a monotonically non-increasing sequence. By Assumption \ref{ass:Lip}(iii), we have that
\[
\lim_{k\to\infty}F(x_k,y(x_k)) = \bar F\geq F_{low}.
\]

When $k\in {\cal S}$, recalling that
\[
F(x_{k+1},y(x_{k+1}))  - F(x_k,y(x_k))  \leq  2L_F \min\{\zeta_{k-1}, \alpha_k^3\}  -\gamma(\alpha_k)^2 \leq 0
\]
and that 
\[
\lim_{k\to\infty,k\in {\cal S}} F(x_{k+1},y(x_{k+1}))  - F(x_k,y(x_k)) = 0
\]
we have
\begin{equation}\label{eq:lim1}
\lim_{k\to\infty,k\in {\cal S}}2L_F \min\{\zeta_{k-1}, \alpha_k^3\}  -\gamma(\alpha_k)^2 =0.
\end{equation}

Furthermore, since $\zeta_{k}> 0$, $\alpha_k\geq 0$ and $\zeta_{k-1}\to 0$, we also have that
\begin{equation}\label{eq:lim2}
\lim_{k\to\infty,k\in {\cal S}}2L_F \min\{\zeta_{k-1}, \alpha_k^3\} = 0.
\end{equation}
Then, by \eqref{eq:lim1} and \eqref{eq:lim2}, we have that $\lim_{k\to\infty,k\in {\cal S}}\alpha_k = 0$.
On the other hand, since $\alpha_k = 0$ when $k\in {\cal U}$, we trivially have that $\lim_{k\to\infty,k\in {\cal U}}\alpha_k = 0$,
so that we can conclude that
\begin{equation}\label{eq:alphaktozero}
\lim_{k\to\infty}\alpha_k = 0.
\end{equation}

Now, let us consider the sequence of tentative step sizes $\{\tilde\alpha_k\}$ and recall that, by the instructions of the algorithm,

\begin{itemize}
\item[(i)] when $k\in {\cal S}$, i.e. $\alpha_k > 0$, $\tilde\alpha_{k+1} = \alpha_k$;
\item[(ii)] when $k\in {\cal U}$, i.e. $\alpha_k=0$, $\tilde\alpha_{k+1} = \theta\tilde\alpha_k$.
\end{itemize}
Then, by \eqref{eq:alphaktozero}, we can write
\begin{equation}\label{eq:tildealphatozero1}
\lim_{k\to\infty,k\in {\cal S}}\tilde\alpha_{k+1} = 0.
\end{equation}
On the other hand, for every index $k\in {\cal U}$, let $m_k$ denote the biggest index such that $m_k\in {\cal S}$ and $m_k < k$. Then, we can write
\[
\tilde\alpha_{k+1} = \theta\tilde\alpha_k = \theta^{k-m_k}\alpha_{m_k}. 
\]
Now, when $k\to\infty$ and $k\in {\cal U}$, it results that either $m_k\to\infty$ (when ${\cal S}$ is infinite) or $k-m_k\to\infty$ (when ${\cal S}$ is finite). Thus, by the above relation, we can write
\begin{equation}\label{eq:tildealphatozero2}
\lim_{k\to\infty,k\in {\cal U}}\tilde\alpha_{k+1} = 0.
\end{equation}
Then, considering \eqref{eq:tildealphatozero1} and \eqref{eq:tildealphatozero2}, we can write
\[
\lim_{k\to\infty}\tilde\alpha_k = 0
\]
and the proof is concluded.
$\hfill\Box$
\par\smallskip
We now prove the main convergence result, after recalling two technical results. The first, from \cite[Lemma 2.6]{fasano2014linesearch}, concerns projected steps along feasible directions as the step size vanishes.

\begin{lemma}\label{lem:technicallemma1}
Let $\{x_k\}$, $\{d_k\}$ and $\{\eta_k\}$ be sequences of points, directions and scalars, respectively, with $\eta_k > 0$ for all $k$. Assume that for some infinite index set $K$, it results
\[
\lim_{k\to\infty,\,k\in K} x_k = \bar x, \qquad
\lim_{k\to\infty,\,k\in K} d_k = \bar d, \qquad
\lim_{k\to\infty,\,k\in K} \eta_k = 0 .
\]
with $\bar x\in X$, and $\bar d\in D(\bar x)$, $\bar d\neq 0$. Then
\begin{itemize}
    \item[(i)] for all $k\in K$ sufficiently large
    \[
        [x_k+\eta_k d_k]_{[l,u]} \neq x_k;
    \]
    \item[(ii)] the following limit holds:
    \[
       \lim_{k\to\infty, k\in K} v_k = \bar d
    \]
    where
    \[
    v_k = \frac{[x_k+\eta_k d_k]_{[l,u]} - x_k }{\eta_k}.
    \]
\end{itemize}
\end{lemma}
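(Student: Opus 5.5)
The argument works componentwise, because the projection onto the box $X$ acts coordinate by coordinate. Write $w_k = [x_k+\eta_k d_k]_{[l,u]}$, so that $(v_k)_i = \bigl((w_k)_i - (x_k)_i\bigr)/\eta_k$. Since $x_k\in X$, each $(w_k)_i$ lies between $(x_k)_i$ and $(x_k)_i+\eta_k (d_k)_i$. Hence
\[
|(v_k)_i| \le |(d_k)_i| \quad\text{and}\quad \operatorname{sign}\bigl((v_k)_i\bigr)\in\{0,\operatorname{sign}((d_k)_i)\}.
\]
It therefore suffices to show $(v_k)_i\to \bar d_i$ for each $i$ separately. Part (i) then follows from (ii): since $\bar d\neq 0$, the limit forces $v_k\neq 0$ for $k\in K$ large, that is, $w_k\neq x_k$.

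I split the indices according to the position of $\bar x_i$ and the sign of $\bar d_i$.

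\emph{Case $\bar d_i=0$.} The bound $|(v_k)_i|\le|(d_k)_i|\to 0$ gives the claim immediately.

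\emph{Case $l_i<\bar x_i<u_i$.} For $k\in K$ large, $(x_k)_i$ stays in a fixed compact subinterval of $(l_i,u_i)$. Because $\eta_k (d_k)_i\to 0$, the point $(x_k)_i+\eta_k(d_k)_i$ lies in $[l_i,u_i]$ eventually. Then the projection is inactive, $(v_k)_i=(d_k)_i$, and the limit is $\bar d_i$.

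\emph{Case $\bar x_i=l_i$, $\bar d_i\neq 0$.} Since $\bar d\in D(\bar x)$, we must have $\bar d_i>0$ (note $l_i<u_i$ is implicit; if $l_i=u_i$, feasibility forces $\bar d_i=0$). For $k$ large, $(d_k)_i>\bar d_i/2>0$. The lower bound is then irrelevant, because the step moves upward from $(x_k)_i\ge l_i$. The only possible truncation is by $u_i$, and since $(x_k)_i\to l_i<u_i$ while $\eta_k(d_k)_i\to0$, the upper bound is not reached eventually. So again $(v_k)_i=(d_k)_i\to\bar d_i$.

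\emph{Case $\bar x_i=u_i$, $\bar d_i\neq 0$.} This is symmetric to the previous case, with $\bar d_i<0$.

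The key step is the boundary case. There one must use the feasibility $\bar d\in D(\bar x)$ to ensure that the active bound is never the one hit, while the vanishing $\eta_k$ ensures the opposite bound is never reached. The remaining cases are routine.
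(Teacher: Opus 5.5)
Your proof is correct and is essentially the standard componentwise argument behind the result the paper imports without proof from \cite[Lemma 2.6]{fasano2014linesearch}: the projection acts coordinatewise, feasibility of $\bar d$ rules out truncation at the active bound, $\eta_k\to 0$ rules it out at the other bound, and $|(v_k)_i|\le|(d_k)_i|$ handles $\bar d_i=0$; deriving (i) from (ii) is a tidy shortcut. Your use of $x_k\in X$ is genuinely needed and only implicit in the statement as printed (in one dimension, $x_k=l-1/k$, $d_k=1$, $\eta_k=1/k^2$ gives $v_k=k$), but it holds for all iterates of DFN-LLA, so nothing is lost.
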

The next result (\cite[Proposition 2.3]{Lin2009}) states that feasible directions at $\bar x$ are also feasible for iterates sufficiently close to~$\bar x$.

\begin{prop}\label{lem:technicallemma2}
Given problem \eqref{prob:onlyboxconstraint}, let \( \{x_k\} \subset X \) for all \( k \) and \( \{x_k\} \to \bar{x} \) as \( k \to \infty \).  
Then, for \( k \) sufficiently large,  
\[
D(\bar{x}) \subseteq D(x_k).
\]
\end{prop}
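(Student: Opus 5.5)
We need to prove Proposition~\ref{lem:technicallemma2}: if $\{x_k\}\subset X$ converges to $\bar x$, then $D(\bar x)\subseteq D(x_k)$ for all $k$ sufficiently large. The approach is to compare the active index sets of $x_k$ and $\bar x$. Since $D(x)$ is defined only through the sets of active lower and upper bounds, it suffices to show that every bound active at $x_k$ is already active at $\bar x$ (with the same side) once $k$ is large. Then every constraint imposed on $d$ by membership in $D(x_k)$ is also imposed by membership in $D(\bar x)$.

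The key step uses strict inequalities and convergence. Define $I_l(x)=\{i: x_i=l_i\}$ and $I_u(x)=\{i: x_i=u_i\}$. For each index $i\notin I_l(\bar x)$ we have $\bar x_i>l_i$. Set
\[
\rho=\min\Bigl\{\min_{i\notin I_l(\bar x)}(\bar x_i-l_i),\ \min_{i\notin I_u(\bar x)}(u_i-\bar x_i)\Bigr\}>0,
\]
which is a minimum over finitely many positive numbers, with the convention that $\min\emptyset=+\infty$. Since $x_k\to\bar x$, there is $\hat k$ such that $\|x_k-\bar x\|_\infty<\rho$ for all $k\ge\hat k$. For such $k$ and any $i\notin I_l(\bar x)$ we get $(x_k)_i>\bar x_i-\rho\ge l_i$, so $i\notin I_l(x_k)$. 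Hence $I_l(x_k)\subseteq I_l(\bar x)$, and analogously $I_u(x_k)\subseteq I_u(\bar x)$.

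Finally, take $d\in D(\bar x)$ and $k\ge\hat k$. If $(x_k)_i=l_i$, then $i\in I_l(\bar x)$, so $d_i\ge0$. If $(x_k)_i=u_i$, then $i\in I_u(\bar x)$, so $d_i\le 0$. Every other component of $d$ is unrestricted in $D(x_k)$. Thus $d\in D(x_k)$.

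There is no real obstacle here. The only care needed is in the degenerate case $l_i=u_i$, where an index lies in both active sets; the argument handles it componentwise without change.
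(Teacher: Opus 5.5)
Your proof is correct. The paper gives no argument of its own and simply cites \cite[Proposition 2.3]{Lin2009}; your reasoning is the standard active-set argument for that fact: a uniform margin $\rho$ on the inactive bounds gives $I_l(x_k)\subseteq I_l(\bar x)$ and $I_u(x_k)\subseteq I_u(\bar x)$ for large $k$. The only step you leave implicit is that $\bar x\in X$ because $X$ is closed, which is what guarantees $\bar x_i>l_i$ for $i\notin I_l(\bar x)$ and $u_i-\bar x_i>0$ for $i\notin I_u(\bar x)$, hence $\rho>0$.
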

Now, we are ready to state the main convergence result of this subsection.
\begin{Teorema}\label{prop:mainconv_perturbed}
Let $\{x_k\}$ be the sequence of iterates generated by the DFN-LLA algorithm when $\zeta_0 > \bar\zeta = 0$. Let $\bar{x}$ be any limit point of $\{x_k\}$, and let $K$ be an infinite set of indices such that
\[
\lim_{k \to \infty,\, k \in K} x_k = \bar{x}.
\]
If the subsequence $\{d_k\}_K$ of directions is dense in the unit sphere, then $\bar{x}$ is Clarke-Jahn stationary for problem~\eqref{prob:onlyboxconstraint}.
\end{Teorema}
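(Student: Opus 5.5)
We argue by contradiction, following the standard linesearch scheme of \cite{fasano2014linesearch}. Suppose $\bar x$ is not Clarke-Jahn stationary. Then there is $\bar d\in D(\bar x)$, which we may normalize so that $\|\bar d\|=1$, with $\bar F^{\circ}(\bar x,\bar d)<0$. Since $\bar F$ is Lipschitz (Assumption~\ref{ass:Lip}(ii)), $\bar F^\circ(\bar x,\cdot)$ is Lipschitz in the direction argument. Hence there are $\rho>0$ and $\varepsilon>0$ such that
\[
\frac{\bar F(x+td)-\bar F(x)}{t}\le-\rho
\]
for all $x\in X\cap B_\varepsilon(\bar x)$, all $t\in(0,\varepsilon]$ with $x+td\in X$, and all unit $d$ with $\|d-\bar d\|\le\varepsilon$. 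To apply this along the algorithm we need a subsequence $K_1\subseteq K$ with $d_k\to\bar d$ on $K_1$. Density of $\{d_k\}_K$ gives, for each $j$, an index $k_j\in K$ with $\|d_{k_j}-\bar d\|\le 1/j$. Since density still holds after deleting finitely many indices, we can take $k_j$ increasing. Finally, Proposition~\ref{lem:technicallemma2} gives $\bar d\in D(x_k)$ for $k$ large.

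Next we extract a step and a failure condition along $K_1$. For $k\in K_1$, define $\eta_k=\tilde\alpha_k$ if $\alpha_k=0$, and $\eta_k=\alpha_k/\delta$ if $\alpha_k>0$. In both cases the test along some direction failed. If $\alpha_k=0$, both $\pm d_k$ failed, in particular $+d_k$, giving
\[
P([x_k+\tilde\alpha_k d_k]_{[l,u]},\zeta_k)>P(x_k,\zeta_k)-\gamma\tilde\alpha_k^2 .
\]
If $\alpha_k>0$ and $\tilde d_k=d_k$, extrapolation stopped at $\beta=\alpha_k/\delta$ along $d_k$. If instead $\tilde d_k=-d_k$, the first test along $+d_k$ failed at $\tilde\alpha_k$, and we set $\eta_k=\tilde\alpha_k$. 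So in all cases $P([x_k+\eta_k d_k]_{[l,u]},\zeta_k)>P(x_k,\zeta_k)-\gamma\eta_k^2$, and by Proposition~\ref{prop:alphastozero}, $\eta_k\to0$.

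Now we pass from $P$ to $\bar F$. By Assumption~\ref{ass:mapping}(b) and Assumption~\ref{ass:Lip}(i), $|P(x,\zeta_k)-\bar F(x)|\le L_F\zeta_k$, so
\[
\bar F([x_k+\eta_k d_k]_{[l,u]})-\bar F(x_k)>-\gamma\eta_k^2-2L_F\zeta_k .
\]
The main obstacle is to show $\zeta_k=o(\eta_k)$. Proposition~\ref{prop:boundzeta_complete}(ii) gives $\zeta_k\le\tilde\alpha_k^3$; case (i) cannot occur unless $\zeta_k=0$, which is trivial. In every case above $\eta_k\ge\tilde\alpha_k$, because $\alpha_k\ge\tilde\alpha_k$ when successful. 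Hence $\zeta_k\le\eta_k^3$. Dividing by $\eta_k$ then gives
\[
\frac{\bar F(x_k+\eta_k v_k)-\bar F(x_k)}{\eta_k}>-\gamma\eta_k-2L_F\eta_k^2,
\]
with $v_k$ as in Lemma~\ref{lem:technicallemma1}.

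To conclude, Lemma~\ref{lem:technicallemma1} gives $v_k\to\bar d$, and $x_k+\eta_k v_k\in X$ by construction. The Lipschitz continuity of $\bar F$ lets us replace $v_k$ by its normalized version, or absorb $\|v_k-\bar d\|$ into an $o(1)$ error. Taking $\limsup$ over $K_1$, the left-hand side is at most $\bar F^\circ(\bar x,\bar d)+o(1)\le-\rho+o(1)$, while the right-hand side tends to $0$. This is a contradiction. The delicate points are making sure the failed test is along $+d_k$ and not only $-d_k$, and keeping the cubic coupling $\zeta_k\le\tilde\alpha_k^3\le\eta_k^3$ in place.
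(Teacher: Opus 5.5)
Your proof is correct and follows essentially the same route as the paper. It uses the same choice of $\eta_k$ from failed tests along $+d_k$ (you are more explicit than the paper about the case $\tilde d_k=-d_k$), the same passage from $P$ to $\bar F$ at cost $2L_F\zeta_k$, the same coupling $\zeta_k\le\tilde\alpha_k^3\le\eta_k^3$, and Lemma~\ref{lem:technicallemma1} plus Lipschitz continuity of $\bar F$ to swap $v_k$ for $\bar d$. The only difference is cosmetic: you first turn $\bar F^{\circ}(\bar x,\bar d)<0$ into a uniform bound on difference quotients near $(\bar x,\bar d)$, whereas the paper bounds $\bar F^{\circ}(\bar x,\bar d)$ from below directly by a $\limsup$ along the subsequence. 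That uniform bound really follows from Lipschitz continuity of $\bar F$ together with $x+t\bar d\in X$ for $x\in X$ near $\bar x$ and small $t$, not from Lipschitz continuity of $\bar F^{\circ}(\bar x,\cdot)$ as you state.
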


\textbf{Proof.} We proceed by contradiction and assume $\bar x$ is not Clarke-Jahn stationary, i.e.\ a $\bar d\in D(\bar x)\cap S(0,1)$ exists such that $\bar F^\circ(\bar x;\bar d) < 0$.

By the instructions of the algorithm, we have that either
\begin{equation}\label{eq:fall1}
P([x_k+(\alpha_k/\delta)d_k]_{[l,u]}, \zeta_k) > P(x_k,\zeta_k) - \gamma (\alpha_k/\delta)^2,
\end{equation}
or
\begin{equation}\label{eq:fall2}
P([x_k+\tilde\alpha_kd_k]_{[l,u]}, \zeta_k) > P(x_k,\zeta_k) - \gamma (\tilde\alpha_k)^2.
\end{equation}
For every index $k\in K$, let us denote

\[\eta_k =
\begin{cases}
\displaystyle \frac{\alpha_k}{\delta} & \text{if \eqref{eq:fall1} holds}\\[0.4em]
\tilde\alpha_k & \text{if \eqref{eq:fall2} holds}
\end{cases}
\, \,  \text{ and} \, \,
v_k = \frac{[x_k+\eta_k d_k]_{[l,u]}-x_k}{\eta_k}.
\]

By Proposition \ref{prop:alphastozero}, we have that $\lim_{k\to\infty,k\in K}\eta_k = 0$. Furthermore, by the density assumption on $\{d_k\}_K$, an infinite subset $\bar K\subseteq K$ exists such that
\[
\lim_{k\to\infty,\,k\in \bar K} x_k = \bar x, \qquad
\lim_{k\to\infty,\,k\in \bar K} d_k = \bar d, \qquad
\lim_{k\to\infty,\,k\in \bar K} \eta_k = 0 .
\]
By considering Proposition \ref{lem:technicallemma2}, we have that, for sufficiently large $k$ and  $k\in K$,
$$x_k+\eta_k \bar d\in X.$$
Then, the definition of Clarke-Jahn generalized derivative, Assumption~\ref{ass:mapping} and points (i), (ii) of Assumption \ref{ass:Lip} imply:
\par\smallskip
{\footnotesize
\begin{align*}
\bar F^{\circ}(\bar x,\bar d) 
&= \limsup_{\substack{x_k\to \bar x,\, t\downarrow 0 \\ x_k\in X,\, x_k+t\bar d\in X}}
   \frac{\bar F(x_k+t\bar d)-\bar F(x_k)}{t} \\
&\geq \limsup_{k\to\infty,\,k\in\bar K}
   \frac{\bar F(x_k+\eta_k\bar d)-\bar F(x_k)}{\eta_k} \\
&= \limsup_{k\to\infty,\,k\in\bar K}
   \frac{\bar F(x_k+\eta_k\bar d) - \bar F(x_k+\eta_k v_k)
   + \bar F(x_k+\eta_k v_k)-\bar F(x_k)}{\eta_k} \\
&\geq \limsup_{k\to\infty,\,k\in\bar K}
   \frac{\bar F(x_k+\eta_k v_k)-\bar F(x_k)}{\eta_k}
   - L_{\bar F}\|v_k-\bar d\| \\
&= \limsup_{k\to\infty,\,k\in\bar K}
   \frac{\bar F(x_k+\eta_k v_k) - P(x_k+\eta_k v_k,\zeta_k)
   + P(x_k+\eta_k v_k,\zeta_k)-\bar F(x_k)}{\eta_k}
   - L_{\bar F}\|v_k-\bar d\| \\
&\geq \limsup_{k\to\infty,\,k\in\bar K}
   \frac{P(x_k+\eta_k v_k,\zeta_k)-\bar F(x_k)}{\eta_k}
   - L_F\frac{\zeta_k}{\eta_k} - L_{\bar F}\|v_k-\bar d\| \\
&= \limsup_{k\to\infty,\,k\in\bar K}
   \frac{P(x_k+\eta_k v_k,\zeta_k)-\bar F(x_k)
   + P(x_k,\zeta_k)-P(x_k,\zeta_k)}{\eta_k}
   - L_F\frac{\zeta_k}{\eta_k} - L_{\bar F}\|v_k-\bar d\| \\
&\geq \limsup_{k\to\infty,\,k\in\bar K}
   \frac{P(x_k+\eta_k v_k,\zeta_k)-P(x_k,\zeta_k)}{\eta_k}
   - 2L_F\frac{\zeta_k}{\eta_k} - L_{\bar F}\|v_k-\bar d\|.
\end{align*}
}
\par\smallskip
Concerning $\zeta_k/\eta_k$, by the definition of $\eta_k$ and the fact that $\zeta_k \leq\tilde\alpha_k^3$ (by Proposition~\ref{prop:boundzeta_complete}), we have that
\begin{itemize}
\item[] either $\eta_k = \tilde\alpha_k$ so that 
\begin{equation}\label{boundeta_1}
\frac{\zeta_k}{\eta_k} \leq \tilde\alpha_k^2\leq \eta_k^2;
\end{equation}
\item[] or $\eta_k = \alpha_k/\delta \geq \tilde\alpha_k/\delta$ so that
\begin{equation}\label{boundeta2}
\frac{\zeta_k}{\eta_k} \leq \delta\tilde\alpha_k^2\leq \delta^3\eta_k^2 \leq \eta_k^2.
\end{equation}
\end{itemize}

Then, from point (i) of Lemma \ref{lem:technicallemma1}, we have \( v_k \neq 0 \), for \( k \in \bar{K} \) and sufficiently large,  
so that relations \eqref{eq:fall1} and \eqref{eq:fall2} can be equivalently expressed as  
\[
P(x_k + \eta_k v_k,\zeta_k) > P(x_k, \zeta_k) - \gamma \eta_k^2.
\]
Hence

\[
\frac{  P(x_k+\eta_kv_k,\zeta_k)-P(x_k,\zeta_k)}{\eta_k} > -\gamma\eta_k.
\]

Finally, using the previous relation and recalling~\eqref{boundeta_1} and~\eqref{boundeta2}, we get that the following inequality
\[
  \bar F^{\circ}(\bar x,\bar d)\ge\limsup_{k\to\infty,k\in\bar K}\frac{  P(x_k+\eta_kv_k,\zeta_k)-P(x_k,\zeta_k)}{\eta_k} - 2L_F\frac{\zeta_k}{\eta_k} - L_{\bar F}\|v_k-\bar d\| ,
\]
which implies
\[
  \bar F^{\circ}(\bar x,\bar d)\ge \limsup_{k\to\infty,k\in\bar K}-\gamma \eta_k - 2L_F{\eta_k^2} - L_{\bar F}\|v_k-\bar d\|.
\]
Since $v_k\to\bar d$ and $\eta_k\to 0$ when $k\to\infty$, $k\in\bar K$ by point (ii) of Lemma \ref{lem:technicallemma1} we get that 
\[
  \bar F^{\circ}(\bar x,\bar d)\ge  0
\]
which is in contrast with $\bar F^\circ(\bar x,\bar d) < 0$ and concludes the proof.
$\hfill\Box$

\subsection{Convergence analysis for the inexact case ($\bar\zeta > 0$)}
We now consider $\bar{\zeta}>0$. Since the lower-level tolerance cannot vanish, the perturbed objective $P(x,\zeta)$ does not converge to $\bar F(x)$. Consequently, we cannot expect the algorithm to converge to exact Clarke-Jahn stationary points of problem~\eqref{prob:onlyboxconstraint}. Instead, we establish convergence to approximate stationary points characterized by the $(\delta, \epsilon)$-Goldstein stationarity condition which we have recalled in Definition~\ref{def:Goldsteinstazionario}. 
To facilitate the analysis, we partition the iteration indices into successful and unsuccessful sets, further subdividing the unsuccessful iterations based on whether the lower-level tolerance is updated. 

\begin{itemize}
\item[] 
$\mathcal{S} = \{k \in \mathbb{N}_0 : \tilde{\alpha}_{k+1} = \alpha_k > 0,\;
\zeta_{k+1} = \zeta_k \}$,
\item[]
$\mathcal{U} = \mathcal{U}_1 \cup \mathcal{U}_2$,
\end{itemize}
where
\[
\begin{aligned}
\mathcal{U}_1 &= \{ k \in \mathbb{N}_0 :
\tilde{\alpha}_{k+1} = \max\{\alpha_{\min},\theta\tilde\alpha_k\},\;
\alpha_k = 0,\;
\zeta_{k+1} = \zeta_k \}, \\
\mathcal{U}_2 &= \{ k \in \mathbb{N}_0 :
\tilde{\alpha}_{k+1} = \max\{\alpha_{\min},\theta\tilde\alpha_k\},\;
\alpha_k = 0,\;
\zeta_{k+1} = \max\{\bar\zeta,\min\{\theta\zeta_k,\tilde\alpha_{k+1}^3\}\} \}.
\end{aligned}
\]
Note that, when $k\in{\cal U}_1$, i.e. when the iteration is unsuccessful but we do not update the precision parameter, we have $\zeta_k\leq \tilde\alpha_{k+1}^3$.

The following proposition shows that the iteration sequence eventually stabilizes with the step size parameter reaching its minimum value $\alpha_{\min}$.

\begin{prop}\label{prop:Sfinite}
Let $\{x_k\}$ and $\{\tilde\alpha_k\}$ be the sequences of iterates and tentative step size parameters generated by Algorithm DFN-LLA when $\bar\zeta > 0$. Then the set $\cal S$ of successful iteration indices is finite, i.e., the sequence of iterates $\{x_k\}$ is eventually constant.  Furthermore, for $k$ sufficiently large, we have $\tilde\alpha_{k+1} = \tilde\alpha_k = \alpha_{\min}>0$.    
\end{prop}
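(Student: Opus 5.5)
The plan has three steps. First show that $\zeta_k$ is eventually constant. Then show that successful iterations are finite, using the fact that every successful step is at least $\alpha_{\min}$. Finally show that $\tilde\alpha_k$ saturates at $\alpha_{\min}$.

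\emph{Step 1: $\zeta_k$ is eventually constant.} By the update rule, $\zeta_{k+1}\le\max\{\bar\zeta,\zeta_k\}$ and $\zeta_0\ge\bar\zeta$, so $\{\zeta_k\}$ is monotone non-increasing and bounded below by $\bar\zeta>0$. Every index in $\mathcal U_2$ at which $\zeta_{k+1}\ne\zeta_k$ produces one of two outcomes. Either $\zeta_{k+1}=\bar\zeta$, after which $\zeta$ is frozen by Proposition~\ref{prop:boundzeta_complete}(i). Or $\zeta_{k+1}\le\theta\zeta_k$, and this can happen only finitely many times before $\theta^j\zeta_0<\bar\zeta$. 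Hence there is $\hat k$ with $\zeta_k=\hat\zeta\ge\bar\zeta$ for all $k\ge\hat k$.

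\emph{Step 2: $\mathcal S$ is finite.} The key observation is that $\tilde\alpha_k\ge\alpha_{\min}$ for every $k$. This holds at $k=0$ because $\tilde\alpha_0\ge\zeta_0^{1/3}\ge\bar\zeta^{1/3}>(\sigma\bar\zeta)^{1/3}$. It propagates because unsuccessful iterations take $\max\{\alpha_{\min},\theta\tilde\alpha_k\}$, and successful ones set $\tilde\alpha_{k+1}=\alpha_k\ge\tilde\alpha_k$ since extrapolation only enlarges the step. Therefore, for $k\in\mathcal S$ with $k\ge\hat k$,
\[
P(x_{k+1},\hat\zeta)\le P(x_k,\hat\zeta)-\gamma\alpha_k^2\le P(x_k,\hat\zeta)-\gamma\alpha_{\min}^2 .
\]
On unsuccessful iterations $x_{k+1}=x_k$, so $\{P(x_k,\hat\zeta)\}_{k\ge\hat k}$ is non-increasing and drops by the fixed positive amount $\gamma\alpha_{\min}^2$ at each successful step. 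By Remark~\ref{Remark}, $P(\cdot,\hat\zeta)\ge F_{low}-L_F\hat\zeta$, so only finitely many such steps are possible. Consequently $x_k$ is eventually constant.

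\emph{Step 3: $\tilde\alpha_k$ reaches $\alpha_{\min}$.} Once all iterations are unsuccessful, $\tilde\alpha_{k+1}=\max\{\alpha_{\min},\theta\tilde\alpha_k\}$. Since $\theta<1$, the sequence $\theta\tilde\alpha_k$ falls below $\alpha_{\min}$ after finitely many steps. From then on $\tilde\alpha_{k+1}=\tilde\alpha_k=\alpha_{\min}>0$, where positivity follows from $\sigma,\bar\zeta>0$.

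The only delicate point is the lower bound $\alpha_k\ge\alpha_{\min}$ in Step 2, which is exactly what replaces the vanishing-step argument of the exact case. I would verify it by induction as indicated, checking in the ProjectedExtrapolation procedure that a returned positive $\alpha$ is never smaller than the initial trial step $\tilde\alpha$.
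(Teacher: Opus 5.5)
Your proof is correct and follows the paper's argument. In both, $\zeta_k$ freezes at some $\hat\zeta\ge\bar\zeta>0$; afterwards each successful iteration lowers $P(\cdot,\hat\zeta)$ by at least $\gamma\alpha_{\min}^2$, so $\mathcal S$ is finite by the lower bound on $P$, and the update $\max\{\alpha_{\min},\theta\tilde\alpha_k\}$ then saturates at $\alpha_{\min}$. Your counting argument for why $\zeta_k$ becomes constant in finitely many steps (each strict decrease either hits $\bar\zeta$ or contracts by $\theta$), and your induction showing $\tilde\alpha_k\ge\alpha_{\min}$, supply two steps that the paper only asserts.
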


\textbf{Proof.}
By the instructions of the Algorithm DFN-LLA and considering its initialization, the sequence of positive numbers $\{\zeta_k\}$ is monotone non-increasing. Then $\lim_{k\to\infty}\zeta_k = \hat\zeta\geq 0$. 

From the updating rule and the initialization of the algorithm, it must be $\hat \zeta \geq \bar \zeta = \frac{\alpha_{\min}^3}{\sigma} > 0.$ 
Hence, there exists an index \( \hat{k} \) such that $\zeta_k = \zeta_{\hat{k}} = \hat{\zeta}$ (if $\zeta_0 = \bar\zeta$, then $\hat k = 0$), and
\[
P(x_k, \zeta_k) = P(x_k, \hat{\zeta}),\text{ for all } k \geq \hat{k},
\]  
Then, by the instructions of the Projected Extrapolation method and by taking into account that $\alpha_k\geq\tilde\alpha_k\geq\alpha_{\min}$, we have at every  successful iterations $k\in\cal S,$ $ k \geq \hat{k}$ that
\[
P(x_{k+1},\hat\zeta) \leq P(x_k,\hat\zeta) -\gamma(\alpha_k)^2 \leq P(x_k,\hat\zeta) -\gamma(\alpha_{\min})^2.
\]
Hence, by Assumption \ref{ass:Lip}(iii), the number of successful iterations must be finite.  

Since $\cal S$ is finite, from a given $k$ sufficiently large on we have
\begin{equation}\label{eq:77}
\tilde\alpha_{k+1} = \max\{\alpha_{\min},\theta\tilde\alpha_k\}\geq\alpha_{\min}.    
\end{equation}
Let us suppose, by contradiction, that $\tilde\alpha_{k+1}>\alpha_{\min}$ for all $k$ sufficiently large. This, by \eqref{eq:77}, means that $\tilde\alpha_{k+1}=\theta\tilde\alpha_k$ for all $k$ sufficiently large. Then, since $\theta\in(0,1)$, $\lim_{k\to\infty}\tilde\alpha_k = 0$, which contradicts $\tilde\alpha_{k+1}>\alpha_{\min}>0$, concluding the proof.$\hfill\Box$
\par\smallskip
Proposition~\ref{prop:costantzeta} completes the asymptotic characterization by showing that the lower-level tolerance eventually reaches and remains at the target value $\bar{\zeta}$.

\begin{prop}\label{prop:costantzeta}
Let $\{\zeta_k\}$ be the sequence of error parameters generated by Algorithm DFN-LLA when $\bar\zeta > 0$. Then an index $\bar k \in \mathbb{N}_0$ exists such that $\zeta_k=\bar\zeta$ for $k\geq\bar k$.   
\end{prop}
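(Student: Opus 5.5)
We argue by contradiction, building on Proposition~\ref{prop:Sfinite}. Since $\{\zeta_k\}$ is monotone non-increasing and bounded below by $\bar\zeta$, it has a limit $\hat\zeta\geq\bar\zeta$. Every reduction in the rule of step~8 either sets $\zeta_{k+1}=\bar\zeta$ or multiplies $\zeta_k$ by a factor at most $\theta$. Hence, as in the proof of Proposition~\ref{prop:error_to_zero}, only finitely many reductions can occur unless $\zeta_k$ reaches $\bar\zeta$. Indeed, infinitely many reductions that all stay above $\bar\zeta$ would force $\zeta_k\to 0<\bar\zeta$, which is impossible. So an index $\hat k$ exists with $\zeta_k=\hat\zeta$ for all $k\geq\hat k$. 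Suppose, by contradiction, that $\hat\zeta>\bar\zeta$.

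By Proposition~\ref{prop:Sfinite}, there is an index $k_1\geq\hat k$ such that every $k\geq k_1$ is unsuccessful ($\alpha_k=0$) and $\tilde\alpha_{k+1}=\tilde\alpha_k=\alpha_{\min}$. At such an iteration the algorithm tests whether $\tilde\alpha_{k+1}^3<\zeta_k$, that is, whether $\alpha_{\min}^3=\sigma\bar\zeta<\hat\zeta$. Because $\sigma\in(0,1)$ and $\hat\zeta>\bar\zeta>0$, we have $\sigma\bar\zeta<\bar\zeta<\hat\zeta$, so the test succeeds. The update then gives
\[
\zeta_{k+1}=\max\{\bar\zeta,\min\{\theta\hat\zeta,\sigma\bar\zeta\}\}=\bar\zeta,
\]
since $\min\{\theta\hat\zeta,\sigma\bar\zeta\}\leq\sigma\bar\zeta<\bar\zeta$. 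Thus $\zeta_{k+1}=\bar\zeta<\hat\zeta$, which contradicts $\zeta_k=\hat\zeta$ for all $k\geq\hat k$. Therefore $\hat\zeta=\bar\zeta$, and $\zeta_k$ equals $\bar\zeta$ from some index on.

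Actually, the argument can be shortened by skipping the limit altogether. Take any $k\geq k_1$. If $\zeta_k>\bar\zeta$, the test $\alpha_{\min}^3=\sigma\bar\zeta<\zeta_k$ holds, so $\zeta_{k+1}=\bar\zeta$ by the same computation. Hence $\zeta_{k_1+1}=\bar\zeta$ in either case. Part~(i) of Proposition~\ref{prop:boundzeta_complete} then keeps $\zeta_k=\bar\zeta$ for every $k\geq\bar k:=k_1+1$.

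The only delicate point is checking that the saturation value $\alpha_{\min}=(\sigma\bar\zeta)^{1/3}$ makes $\alpha_{\min}^3$ strictly less than $\bar\zeta$. This is exactly where $\sigma<1$ is used, and it is what guarantees the reduction test fires as long as $\zeta_k>\bar\zeta$. Everything else follows directly from Proposition~\ref{prop:Sfinite}.
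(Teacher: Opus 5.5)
Your proof is correct. Your first version follows the same skeleton as the paper: take the limit $\hat\zeta$, assume $\hat\zeta>\bar\zeta$, show $\zeta_k$ is eventually constant, then use Proposition~\ref{prop:Sfinite} to get $\tilde\alpha_{k+1}=\alpha_{\min}$. It closes differently, though. The paper uses the invariant $\zeta_k\le\tilde\alpha_k^3$ from Proposition~\ref{prop:boundzeta_complete}(ii), which applies because the contradiction hypothesis keeps every $\zeta_k>\bar\zeta$, and obtains $\hat\zeta\le\alpha_{\min}^3<\bar\zeta$. You instead plug $\tilde\alpha_{k+1}=\alpha_{\min}$ into the update rule and observe that it returns exactly $\bar\zeta$.

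Your shortened version is a genuinely more direct argument. It needs no limit, no contradiction, no part (ii) of Proposition~\ref{prop:boundzeta_complete}, and no finiteness argument for $\mathcal U_2$. It also gives an explicit index, $\bar k=k_1+1$: the target tolerance is reached one iteration after the step size saturates at $\alpha_{\min}$. The paper's route, in exchange, mirrors the proof of Proposition~\ref{prop:error_to_zero} and reuses the coupling $\zeta_k\le\tilde\alpha_k^3$ that drives the rest of the analysis. Both arguments rest on the same fact, $\alpha_{\min}^3=\sigma\bar\zeta<\bar\zeta$, which is where $\sigma<1$ enters.
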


\textbf{Proof.}
If $\zeta_0 = \bar \zeta$ then by Proposition \ref{prop:boundzeta_complete} the proof is ended. If $\zeta_0 > \bar \zeta$ then by the instructions of the Algorithm DFN-LLA and considering its initialization, the sequence of positive numbers $\{\zeta_k\}$ is monotone non-increasing. Then $\lim_{k\to\infty}\zeta_k = \hat\zeta\geq 0$. 

From the updating rule and the initialization of the algorithm, it must be $\hat \zeta \geq \bar \zeta = \frac{\alpha_{\min}^3}{\sigma}.$ Assume by contradiction that $\hat\zeta>\bar \zeta$.
Then, since $\{\zeta_k\}$ is monotone non-increasing, we have that
\begin{equation}\label{diseq:absurd3}
    \zeta_k \geq \hat\zeta >\bar\zeta \ \text{for all}\ k \in \{0,1,2,\dots\}.
\end{equation}

This in turn implies that, by Proposition~\ref{prop:boundzeta_complete} and considering that $\hat \zeta  > \bar \zeta$,
\begin{equation}\label{absurd16}
 \hat\zeta \leq \tilde\alpha_{k+1}^3,\qquad \text{for all}\ k \in \{0,1,2,\dots\}.
\end{equation}

If $\mathcal{U}_2$ is infinite, for $k\in \mathcal{U}_2$ sufficiently large, $\zeta_{k+1} = \min\{\theta\zeta_k,\tilde\alpha_{k+1}^3\} \leq \theta\zeta_k$, so $\zeta_{k+1} < \bar\zeta$, contradicting \eqref{diseq:absurd3}. Hence $\mathcal{U}_2$ is finite, and by Proposition~\ref{prop:Sfinite}, $k\in\mathcal{U}_1$ for $k$ large enough. Thus $\hat k$ exists with $\zeta_k = \hat \zeta$ for all $k\geq\hat k$. 

In this case, by Proposition \ref{prop:Sfinite}, for $k\in {\cal U}_1$, and sufficiently large, $\tilde\alpha_{k+1} = \alpha_{\min}$. Thus, from \eqref{absurd16} we have:
\begin{equation}\label{absurd17}
\hat\zeta \leq \alpha_{\min}^{3}
\end{equation}
Now, considering that \( \alpha_{\min} = (\sigma \bar{\zeta})^{1/3} \) with \( \sigma \in (0,1) \), it follows that \( \bar{\zeta} = \alpha_{\min}^3/\sigma > \alpha_{\min}^3 \). Hence, we obtain \( \hat{\zeta} > \bar{\zeta} > \alpha_{\min}^3 \), which contradicts (\ref{absurd17}), and concludes the proof.  $\hfill\Box$
\par\smallskip
Now, we state the main convergence result of this subsection, namely that the sequence $\{x_k\}$ converges to a $(\delta,\epsilon)$-Goldstein stationary point.
 
\begin{Teorema}\label{prop:convGoldstainstaz}
Assume that $\{d_k\}$ is dense in the unit sphere. Then the sequence \(\{x_k\}\) generated by Algorithm DFN-LLA when $\bar\zeta > 0$ is eventually constant, with the unique limit point being \((\delta, \epsilon)\)-Goldstein stationary for Problem \eqref{prob:onlyboxconstraint}, with
\[
\epsilon = 4\gamma\alpha_{\min} + \frac{8L_F \bar\zeta}{\alpha_{\min}}  \quad \text{and} \quad \delta = \alpha_{\min}.
\]
\end{Teorema}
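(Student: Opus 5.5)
By Propositions~\ref{prop:Sfinite} and~\ref{prop:costantzeta}, there is an index $\hat k$ such that for all $k\ge\hat k$ we have $x_k=\bar x$, $\zeta_k=\bar\zeta$, $\tilde\alpha_k=\alpha_{\min}$, and $\alpha_k=0$. This already gives that $\{x_k\}$ is eventually constant with unique limit $\bar x\in X$. For $k\ge\hat k$ the Projected Extrapolation fails in both directions $\pm d_k$ with stepsize $\alpha_{\min}$:
\[
P([\bar x\pm\alpha_{\min}d_k]_{[l,u]},\bar\zeta)>P(\bar x,\bar\zeta)-\gamma\alpha_{\min}^2 .
\]
Since $\{d_k\}_{k\ge\hat k}$ is still dense in $S(0,1)$, for every $d\in S(0,1)$ these inequalities hold along a subsequence $d_k\to d$. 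Using $|P(x,\bar\zeta)-\bar F(x)|\le L_F\bar\zeta$ (Assumptions~\ref{ass:mapping}(b) and~\ref{ass:Lip}(i)) together with the continuity of $\bar F$ and of the projection, we pass to the limit and obtain
\[
\bar F([\bar x+\alpha_{\min}d]_{[l,u]})-\bar F(\bar x)\ge-\gamma\alpha_{\min}^2-2L_F\bar\zeta\qquad\forall d\in S(0,1).
\]

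The next step is to convert this into a subgradient statement. I would first restrict to $d\in D(\bar x)\cap S(0,1)$ and compare with the unprojected point $\bar x+\alpha_{\min}d$ when it lies in $X$. In general, I would use $w=[\bar x+\alpha_{\min}d]_{[l,u]}-\bar x$, which satisfies $\|w\|\le\alpha_{\min}$ and lies in the segment set $B_\delta(\bar x)\cap X$ with $\delta=\alpha_{\min}$. Lebourg's mean value theorem then gives a point $\tilde x_d$ on the segment $[\bar x,\bar x+w]\subset B_\delta(\bar x)\cap X$ and $\xi_d\in\partial\bar F(\tilde x_d)$ with $\xi_d^\top w\ge-\gamma\alpha_{\min}^2-2L_F\bar\zeta$. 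Dividing by $\alpha_{\min}$ yields approximately $\xi_d^\top d\ge-\gamma\alpha_{\min}-2L_F\bar\zeta/\alpha_{\min}$.

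To get a single pair $(\tilde x,\xi)$ working for all $d$, I would argue by contradiction with a convexification, as in Goldstein-type arguments. Suppose that for every $\tilde x\in B_\delta(\bar x)\cap X$ and every $\xi\in\partial\bar F(\tilde x)$ some $d\in D(\tilde x)\cap S(0,1)$ gives $\xi^\top d<-\epsilon$. One would then build a direction of uniform $\epsilon$-descent through the Goldstein set $\partial_\delta\bar F(\bar x)$. Separating $0$ from the compact convex set $\mathrm{conv}\,\partial\bar F(B_\delta(\bar x))$, restricted to feasible directions, yields a $d$ with $\xi^\top d<-\epsilon/2$ for all $\xi$ in the ball. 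Integrating along the segment from $\bar x$ in direction $d$ for length comparable to $\alpha_{\min}$ then contradicts the limiting inequality once $\epsilon/2\cdot\alpha_{\min}$ exceeds $\gamma\alpha_{\min}^2+2L_F\bar\zeta$. I expect the factors $4$ and $8$ in $\epsilon$ to come precisely from this halving, combined with the projection loss, which may halve the usable step along the direction.

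The main obstacle is this last step. The Goldstein definition requires one $\tilde x$ and one $\xi$ that work for all $d\in D(\tilde x)\cap S(0,1)$, while the descent test at $\bar x$ only controls finite differences from $\bar x$, and the feasible cone changes with $\tilde x$. I would handle the cone mismatch by noting that $D(\tilde x)\subseteq$ the directions keeping a short segment from $\tilde x$ feasible within $B_\delta(\bar x)$. If the separation argument proves too delicate, a fallback is to invoke Proposition~\ref{lem:technicallemma2}-type localization and simply take $\tilde x=\bar x$ where the projection is inactive.
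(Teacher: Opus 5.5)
There is a genuine gap. Your first paragraph is sound: eventual constancy, $\zeta_k=\bar\zeta$, $\tilde\alpha_k=\alpha_{\min}$, and the limiting inequality $\bar F([\bar x+\alpha_{\min}d]_{[l,u]})-\bar F(\bar x)\ge-\gamma\alpha_{\min}^2-2L_F\bar\zeta$ for all $d\in S(0,1)$ are essentially what the paper derives too.

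The gap is in converting this into the stationarity notion. Definition~\ref{def:Goldsteinstazionario} asks for a \emph{single} point $\tilde x\in B_\delta(\bar x)\cap X$ and a \emph{single} $\xi\in\partial\bar F(\tilde x)$ with $\xi^\top d\ge-\epsilon$ for every $d\in D(\tilde x)\cap S(0,1)$. This is stronger than the convex-hull (Goldstein subdifferential) notion. Your Lebourg step yields a pair $(\tilde x_d,\xi_d)$ that depends on $d$, and the contradiction argument does not repair this. Negating the definition only says that each individual subgradient at each point of the ball is $\epsilon$-nonstationary. That does not imply $0$ can be separated from $\mathrm{conv}\,\partial\bar F(B_\delta(\bar x))$: for instance $\bar F(x)=e^{x_1}\cos x_2$ has $\|\nabla\bar F\|=e^{x_1}>0$ everywhere, yet on a ball of radius $\pi$ its gradients point in every direction, so their convex hull contains $0$. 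Even a correct Goldstein-type argument would only give a convex combination of subgradients at different points, i.e.\ the weaker notion, and it leaves the changing cones $D(\tilde x)$ unresolved. Your fallback $\tilde x=\bar x$ is false in general. A Lipschitz function can have a strict descent kink at $\bar x$ that reverses before distance $\alpha_{\min}$; the finite-difference test at scale $\alpha_{\min}$ then passes while $\bar x$ is far from $\epsilon$-stationary. Your guess that the factors $4$ and $8$ come from a halving in separation is also off.

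The missing idea is a proximal localization. Set $c=2\gamma+4L_F\bar\zeta/\alpha_{\min}^2$ and $\tilde G(d)=\bar F(\bar x+d)+c\|d\|^2$. Your limiting inequality gives $\tilde G(d)\ge\bar F(\bar x)+\gamma\alpha_{\min}^2+2L_F\bar\zeta>\tilde G(0)$ whenever $\|d\|=\alpha_{\min}$ and $\bar x+d\in X$. Hence a global minimizer $\bar d$ of $\tilde G$ over the compact set $\{\|d\|\le\alpha_{\min},\ \bar x+d\in X\}$ satisfies $\|\bar d\|<\alpha_{\min}$, so it is a local minimizer of $\tilde G$ subject only to $\bar x+d\in X$. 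Apply Clarke's first-order necessary condition at this one point, using $\partial\tilde G(\bar d)=\partial\bar F(\bar x+\bar d)+2c\bar d$. This yields a single $\bar\xi\in\partial\bar F(\bar x+\bar d)$ with $\bar\xi^\top s\ge-2c\,\bar d^\top s\ge-2c\alpha_{\min}\|s\|$ for all $s\in D(\bar x+\bar d)$. Taking $\tilde x=\bar x+\bar d$ gives the claim, and the cone at $\tilde x$ is handled automatically by the optimality condition at the minimizer. The constants follow from $2c\alpha_{\min}=4\gamma\alpha_{\min}+8L_F\bar\zeta/\alpha_{\min}$: the factor $2$ in $c$ supplies the strict margin on the sphere, and the other factor $2$ is the gradient of $\|d\|^2$.
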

\textbf{Proof.} First, \(\{x_k\}\) is eventually constant, as shown in Proposition \ref{prop:Sfinite}. Let \(\bar{x}\) be the unique limit point. By the stepsize updating rule, every iteration must be unsuccessful with \(\tilde\alpha_k = \alpha_{\min}\) for \(k\) sufficiently large. 
Hence, we can write
\[
P(\bar x,\bar\zeta) < P([\bar x+\alpha_{\min} d_k]_{[l,u]},\bar\zeta) + \gamma\alpha_{\min}^2.
\]
By Assumption~\ref{ass:Lip}, we have
\[
\bar F(\bar x)  <  \bar F([\bar x+\alpha_{\min} d_k]_{[l,u]}) + \gamma\alpha_{\min}^2 + 2 L_F\bar \zeta 
\]
and, for every $d$ such that $\|d\| = \alpha_{\min}$, we can write
\begin{eqnarray*}
\bar F(\bar x) &<& \bar F([\bar x+\alpha_{\min} d_k]_{[l,u]}) - \bar F([\bar x+d]_{[l,u]}) + \bar F([\bar x+d]_{[l,u]}) + \gamma\alpha_{\min}^2 + 2 L_F\bar\zeta\\
&\leq& \bar F([\bar x+d]_{[l,u]}) + L_{\bar F} \left\|[\bar x+\alpha_{\min} d_k]_{[l,u]} - [\bar x+d]_{[l,u]}\right\|+ \gamma\alpha_{\min}^2 + 2 L_F\bar\zeta \\
&\leq& \bar F([\bar x+d]_{[l,u]}) + L_{\bar F} \left\|\alpha_{\min} d_k - d\right\|+ \gamma\alpha_{\min}^2 + 2 L_F\bar\zeta
\end{eqnarray*}

Then, by the density assumption on $\{d_k\}$, a subsequence $\{x_k\}_K$ exists such that:
\begin{eqnarray*}
 && \lim_{k\to\infty,k\in K} \alpha_{\min}d_k = d. 
\end{eqnarray*}
Therefore, for $k$ sufficiently large and $k\in K$, we have:
\[
L_{\bar F}\|\alpha_{\min}d_k - d\|\leq \gamma\alpha_{\min}^2 + 2 L_F\bar \zeta,
\]
so that we can thus write, for $k\in K$ sufficiently large and for every  $d$ such that $\|d\| = \alpha_{\min}$,
\begin{equation}\label{app1}
\bar F(\bar x) < \bar F([\bar x+ d]_{[l,u]}) + 2(\gamma\alpha_{\min}^2 + 2 L_F\bar \zeta).
\end{equation}
In particular, we have
\begin{eqnarray}
&&\label{app2}
\bar F(\bar x) < \bar F(\bar x + d) + 2\gamma\alpha_{\min}^2 + 4 L_F\bar \zeta
\end{eqnarray}
for every $d$ such that $\|d\|=\alpha_{\min}$ and $\bar x+d\in X$.

Now, we define the function 
$$\tilde G_{\bar x}(d) = \bar F(\bar x+d) + \left(2\gamma + 4\dfrac{L_F\bar\zeta}{\alpha_{\min}^2}\right)\|d\|^2.$$ From  \eqref{app2} we have that
\begin{equation}\label{diseq:nonmin}
    \tilde G_{\bar x}(0) < \tilde G_{\bar x}(d) 
\end{equation}

for all $d$ such that $\|d\|=\alpha_{\min}$ and $\bar x+d\in X$.
Now, consider the following problem
\begin{equation}\nonumber
 \begin{array}{ll}
  \displaystyle \min_{d}  & \tilde G_{\bar x}(d) \\
   s.t. & \|d\|\le \alpha_{\min} ,\\
   & \bar x+d\in X.
 \end{array}
\end{equation}
By compactness of the feasible region and continuity of the function $\tilde G_{\bar x}$, the problem admits a global minimum point $\bar d$. By \eqref{diseq:nonmin}, $\bar d$ is such that $\|\bar d\|< \alpha_{\min}$.  Hence, $\bar d$ is a local minimum point, hence stationary, of the problem:  
\begin{equation}\label{app3}
 \begin{array}{ll}
  \displaystyle \min_{d}  & \tilde G_{\bar x}(d) \\
   s.t. & \bar x+d\in X.\\
 \end{array}
\end{equation}
This implies the vector $\bar d$ satisfies Definition (\ref{def:puntostazionario}), namely vector  $\xi\in\partial\tilde G_{\bar x}(\bar d)$ exists such that
\[
\xi^\top s \geq 0,\quad\forall\  s\in \bar D(\bar d).
\]
where,
\[
\bar D(\bar d) = \bigl\{ s \in \mathbb{R}^{n_x} :\, s_i \geq 0\ \text{if}\ \bar x_i+\bar d_i = l_i,\;  s_i \leq 0\ \text{if}\ \bar x_i+\bar d_i= u_i,\;  s_i \in \mathbb{R}\ \text{otherwise} \bigr\},
\]
which is equivalent to $\bar D(\bar d)=D(\bar x+\bar d)$. By taking into account that
\[
\partial\tilde G_{\bar x}(\bar d) = \partial \bar F(\bar x+\bar d) + \left(4\gamma + 8 \dfrac{L_F\bar\zeta}{\alpha_{\min}^2}\right)\bar d
\]
we get that a vector  $\bar\xi\in\partial \bar F(\bar x+\bar d)$ exists such that
\[
\bar \xi^\top s \geq -\left(4\gamma + 8 \dfrac{L_F\bar\zeta}{\alpha_{\min}^2}\right)\bar d^Ts,\quad\forall\  s\in D(\bar x+\bar d).\]
Finally, recalling that $\|\bar d\|<\alpha_{\min}$, we obtain that $\bar x+\bar d \in B_{\alpha_{\min}}(\bar x) \cap X$ and  $\bar\xi\in\partial \bar F(\bar x+\bar d)$ exist such that
\[
\bar \xi^\top \frac{s}{\| s\|} \geq -\left(4\gamma \alpha_{\min} + 8 \dfrac{L_F\bar\zeta}{\alpha_{\min}}\right),\quad\forall\  \frac{s}{\| s\|}\in D(\bar x+\bar d).
\] 
This proves the theorem.
$\hfill\Box$
\par\smallskip
Theorem~\ref{prop:convGoldstainstaz} provides a quantitative estimate of the stationarity quality of the limit point when the lower-level tolerance is bounded away from zero. The parameter $\epsilon$ depends on $\bar{\zeta}$ in two distinct ways: directly through the term $L_F \bar{\zeta} / \alpha_{\min}$, and indirectly through $\alpha_{\min} = (\sigma \bar{\zeta})^{1/3} = \delta$. Indeed, one has 
\[
\epsilon = 4\gamma (\sigma \bar{\zeta})^{1/3} + \frac{8 L_F \bar{\zeta}^{2/3}}{\sigma^{1/3}}.
\]
This relation shows that decreasing $\bar{\zeta}$ by three orders of magnitude (e.g., from $10^{-6}$ to $10^{-9}$) yields an improvement of roughly one order of magnitude in the stationarity measure~$\epsilon$, reflecting the cube-root dependence on~$\bar{\zeta}$.

\section{Handling of General Inequality Upper-Level Constraints}\label{sec:genconstr}

We now extend the framework to handle general nonlinear inequality constraints at the upper-level via an exact penalty function approach.

Under Assumption \ref{ass:mapping}(a), problem \eqref{prob:upperlevel} is equivalent to the following
\begin{equation}\label{prob:generalconstraint}
  \min_{x\in X}\; \bar F(x) \quad \text{s.t.}\quad \bar H(x)\leq 0.
\end{equation}

To establish convergence results analogous to those obtained in the bound-constrained case, we require additional regularity conditions on the constraint. The following assumption extends Assumption~\ref{ass:Lip} to incorporate the constraint regularity.

\begin{ass}\label{ass:Lip_1}\ \par
    \begin{itemize}
        \item[(i)] The upper-level constraints $H_i(x,y)$ are Lipschitz continuous w.r.t. $y$ with constants $L_{H_i}$$,\ i=1,\dots,m$.
        \item[(ii)] The functions $\bar H_i(x)$ are Lipschitz continuous with constant $L_{\bar H_i},$$\,i=1,\dots,m$.
    \end{itemize}
    
Let us denote \( L = L_F + \frac{1}{\rho} \left(\max_{1\le i\le m} L_{H_i}\right) \) and  \( \bar L = L_{\bar F} + \frac{1}{\rho} \left(\max_{1\le i\le m} L_{\bar H_i}\right) \)
\end{ass}

The following definitions extend the stationarity concepts from Section~\ref{sec:boundconstr} to accommodate the presence of general inequality constraints. 

\begin{definition}[Clarke-KKT stationary]
Given problem~\eqref{prob:generalconstraint}, a feasible point \(\bar{x}\) is Clarke-KKT stationary for~\eqref{prob:generalconstraint} if multipliers \(\bar{\lambda}_1, \ldots, \bar{\lambda}_m \in \mathbb{R}\) exist, with  
\[
\bar{\lambda}_i \geq 0 \quad \text{and} \quad \bar{\lambda}_i \bar H_i(\bar{x}) = 0,\quad \bar H_i(\bar{x}) \leq 0 \quad \forall i = 1, \ldots, m,
\]  
such that for every \(d \in D(\bar{x})\),  
\[
\max\left\{d^\top \xi : \xi \in \partial \bar F(\bar{x}) + \sum_{i=1}^m \bar{\lambda}_i \partial \bar H_i(\bar{x})\right\} \geq 0.
\]
\end{definition}

\begin{definition}[\((\delta, \epsilon)\)-Goldstein-KKT stationary]\label{def:golsteinstazionario_kkt}
A feasible point \(\bar x\) is called a \((\delta, \epsilon)\)-Goldstein-KKT stationary point for problem~\eqref{prob:generalconstraint},  if, for some feasible $\tilde x \in B_\delta(\bar x)$, multipliers $\lambda_i\in\Re$, $i=1,\dots,m$, exists such that
    \[
    \lambda_i\geq 0,\ \lambda_i \bar H_i(\tilde x) = 0,\ \bar H_i(\tilde x)\leq 0,\ i=1,\dots,m,
    \]
    and $\xi\in \partial \bar F(\tilde x) + \sum_{i=1}^m\lambda_i\partial \bar H_i(\tilde x)$ such that
\[
\xi^\top d \geq -\epsilon,\quad \forall\ d\in D(\tilde x) \cap S(0,1).
\]
\end{definition}

Given problem~\eqref{prob:generalconstraint}, we introduce the following penalty function which allows us to manage the presence of the upper-level inequality constraints.
\[
Q_{\rho}(x) = \bar F(x) + \frac{1}{\rho}\max\{0,\bar H_1(x),\bar H_2(x),\dots,\bar H_m(x)\}. 
\]
Then, we can rewrite~\eqref{prob:generalconstraint} as
\begin{equation}
\label{prob:penalized_upperlevel}
  \min_{x\in X}\; Q_{\rho}(x) \quad 
\end{equation}

In \cite{fasano2014linesearch}, exactness properties of the function $Q_{\rho}(x)$ have been proved under the Extended Mangasarian-Fromovitz Constraint Qualification (EMFCQ) reported below.

\begin{ass}\label{ass:emfcq}
Given problem~\eqref{prob:generalconstraint}, for any $x\in X\setminus\ int\{x\in\Re^{n_x}:\ \bar H(x)\leq 0\}$ a direction $d\in D(x)$ exists such that
\[
(\xi^{\bar H_i})^\top d < 0
\]
for all $\xi^{\bar H_i}\in\partial \bar H_i(x)$, $i\in\{1,\dots,m:\ \bar H_i(x)\geq 0\}$.
\end{ass}
In particular, as stated in the following proposition from \cite{fasano2014linesearch}, it holds that a $\rho^*>0$ exists such that for all $\rho\in(0,\rho^*]$, every Clarke-Jahn-stationary point of problem~\eqref{prob:penalized_upperlevel} is Clarke-KKT-stationary for problem~\eqref{prob:generalconstraint}.  

\par\smallskip
We emphasize that Assumptions \ref{ass:Lip_1} and \ref{ass:emfcq} are always satisfied throughout this section.

\begin{prop}\label{prop:equivalenza}
Given problem \eqref{prob:generalconstraint} and considering problem \eqref{prob:penalized_upperlevel}, a threshold value $\varrho > 0$ exists such that for every $\rho \in (0, \varrho]$, every Clarke–Jahn stationary point $\bar{x}$ of problem \eqref{prob:penalized_upperlevel} is Clarke-KKT stationary for problem \eqref{prob:generalconstraint}.
\end{prop}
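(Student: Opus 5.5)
The plan is to prove Proposition~\ref{prop:equivalenza} in two stages, following the exact-penalty analysis of \cite{fasano2014linesearch}. First, we show that for small $\rho$ every Clarke–Jahn stationary point of \eqref{prob:penalized_upperlevel} is feasible for \eqref{prob:generalconstraint}. Second, at a feasible stationary point we read off multipliers from the subdifferential of the max-type penalty term.

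\textbf{Step 1 (feasibility).} We argue by contradiction. Suppose there are sequences $\rho_j\downarrow 0$ and points $x_j\in X$ that are Clarke–Jahn stationary for $Q_{\rho_j}$ but satisfy $\phi(x_j):=\max\{0,\bar H_1(x_j),\dots,\bar H_m(x_j)\}>0$. By compactness of $X$ we may pass to a subsequence with $x_j\to\bar x$. Since $\phi(x_j)>0$, we have $\max_i \bar H_i(x_j)>0$, and by continuity $\max_i \bar H_i(\bar x)\ge 0$, so $\bar x\notin \mathrm{int}\{x:\bar H(x)\le 0\}$. Assumption~\ref{ass:emfcq} then gives $d\in D(\bar x)$ with $\xi^\top d<0$ for all $\xi\in\partial\bar H_i(\bar x)$ and all $i$ with $\bar H_i(\bar x)\ge 0$.

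Compactness of $\partial\bar H_i(\bar x)$ and upper semicontinuity of the Clarke subdifferential give a uniform margin: $\xi^\top d\le -\eta<0$ for all $\xi\in\partial\bar H_i(x)$, with $x$ near $\bar x$ and $i$ active or nearly active. Proposition~\ref{lem:technicallemma2} gives $d\in D(x_j)$ for large $j$. Near $x_j$ the penalty term is $\max_{i\in I(x_j)}\bar H_i$ over the indices attaining the max. Hence the chain rule for pointwise maxima gives $\phi^{\circ}(x_j,d)\le \max_{i\in I(x_j)}\max_{\xi\in\partial\bar H_i(x_j)}\xi^\top d\le-\eta$.

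Stationarity then requires
\[
0\le Q_{\rho_j}^{\circ}(x_j,d)\le \bar F^{\circ}(x_j,d)+\tfrac{1}{\rho_j}\phi^{\circ}(x_j,d)\le L_{\bar F}\|d\|-\tfrac{\eta}{\rho_j},
\]
which fails for $j$ large. This contradiction yields a threshold $\varrho$ below which stationary points are feasible. The main obstacle is precisely this uniformity: making $\eta$ independent of $j$. We handle it by upper semicontinuity of $\partial\bar H_i$, together with the observation that the indices active at $x_j$ are eventually among those with $\bar H_i(\bar x)\ge 0$. We also use that sublinearity of the Clarke–Jahn derivative holds on the feasible-direction cone.

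\textbf{Step 2 (multipliers).} Let $\bar x$ be feasible and stationary. Then $\phi(\bar x)=0$, and $\partial\phi(\bar x)\subseteq \mathrm{conv}\bigl(\{0\}\cup\bigcup_{i:\bar H_i(\bar x)=0}\partial\bar H_i(\bar x)\bigr)$. Stationarity gives $\max\{\xi^\top d:\ \xi\in\partial Q_\rho(\bar x)\}\ge 0$ for all $d\in D(\bar x)$, where the Clarke–Jahn derivative is bounded above by the Clarke support function of $\partial Q_\rho$. Since $\partial Q_\rho(\bar x)\subseteq\partial\bar F(\bar x)+\frac1\rho\partial\phi(\bar x)$, every $\xi$ in it has the form $\xi_F+\sum_i\lambda_i\xi_i$. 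Here $\lambda_i=\mu_i/\rho\ge0$ is supported on the active set and $\sum\mu_i\le1$, so complementarity holds automatically.

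The remaining issue is that the multipliers a priori depend on $d$. A minimax/convexity argument (the compact convex set of admissible $(\xi_F,\mu)$ against the convex cone $D(\bar x)$) selects a single $\bar\lambda$ that works for all $d$. This is the form used in \cite{fasano2014linesearch}, and we would invoke their result at that point.
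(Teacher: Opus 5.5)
Your proposal is correct and is essentially an unpacking of \cite[Proposition 3.6]{fasano2014linesearch}, which is all the paper offers as proof: you show by contradiction, via EMFCQ and upper semicontinuity of $\partial\bar H_i$, that Clarke--Jahn stationary points of $Q_\rho$ are feasible for small $\rho$, and you then read multipliers off $\partial\max\{0,\bar H_1,\dots,\bar H_m\}(\bar x)$. Two small touch-ups: $\bar x\notin\mathrm{int}\{x:\bar H(x)\le 0\}$ holds because the infeasible points $x_j$ converge to $\bar x$ (the inequality $\max_i\bar H_i(\bar x)\ge 0$ alone does not imply it), and the last step needs no citation, since separating the compact convex set $\partial Q_\rho(\bar x)$ from the dual cone of $D(\bar x)$ yields a single $\xi^*$ with $(\xi^*)^\top d\ge 0$ for all $d\in D(\bar x)$, whose decomposition gives $d$-independent multipliers.
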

\textbf{Proof.} See \cite[Proposition 3.6]{fasano2014linesearch}. $\hfill\Box$
\par\smallskip
Proposition~\ref{prop:equivalenza} establishes the fundamental equivalence between stationarity for the penalized problem and KKT stationarity for the original constrained problem, provided the penalty parameter is sufficiently small. This result forms the theoretical basis for our algorithmic approach: instead of solving problem~\eqref{prob:generalconstraint} directly, we can solve the bound-constrained penalized problem~\eqref{prob:penalized_upperlevel} using the derivative-free algorithm developed in the previous section. However, since the lower-level problem is solved only approximately, we introduce the perturbed penalty function as
\[
R_{\rho}(x,\zeta) = F(x,\tilde y(x,\zeta)) + \frac{1}{\rho}\max\{0, H_1(x, \tilde y(x, \zeta)),\dots, H_m(x, \tilde y(x, \zeta))\}. 
\]
To solve problem~\eqref{prob:generalconstraint}, we apply Algorithm DFN-LLA with $P(x,\zeta)$ replaced by $R_\rho(x,\zeta)$.  Note that Assumption~\ref{ass:Lip}(iii) ensures that $R_\rho(x,\zeta)$ is bounded from below, while Assumption~\ref{ass:Lip_1} extends the regularity properties to both the constraints of the formulation and the penalized function $R_\rho(x,\zeta)$. 
Consequently, the convergence analysis developed in Section~\ref{sec:boundconstr} can be readily adapted to this more general setting. We can therefore state the following proposition.

\begin{prop}\label{prop:conv_exact_constrained}
Let $\{x_k\}$ be the sequence of points generated by the Algorithm DFN-LLA when $\zeta_0>\bar\zeta = 0$. Let $\bar x$ be any limit point of $\{x_k\}$ and $K$ an infinite subset of indices such that
\[
\lim_{k\to\infty,k\in K} x_k = \bar x.
\]
If the subsequence $\{d_k\}_K$ of directions is dense in the unit sphere,
then a threshold value $\varrho > 0$ exists such that for every $\rho \in (0, \varrho]$, $\bar x$ is Clarke-KKT stationary for problem \eqref{prob:generalconstraint}.
\end{prop}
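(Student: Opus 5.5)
The plan is to reduce the statement to Theorem~\ref{prop:mainconv_perturbed}, applied to the penalty function $Q_\rho$ in place of $\bar F$ and $R_\rho(\cdot,\zeta)$ in place of $P(\cdot,\zeta)$. Proposition~\ref{prop:equivalenza} then converts Clarke–Jahn stationarity of $\bar x$ for \eqref{prob:penalized_upperlevel} into Clarke-KKT stationarity for \eqref{prob:generalconstraint}. Fix $\varrho$ as the threshold given by Proposition~\ref{prop:equivalenza} and take any $\rho\in(0,\varrho]$. It then suffices to show that $\bar x$ is Clarke–Jahn stationary for $\min_{x\in X} Q_\rho(x)$.

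The first step is to check that the pair $(Q_\rho,R_\rho)$ satisfies the same two properties of $(\bar F,P)$ that the analysis of Section~\ref{sec:boundconstr} actually used.
\begin{itemize}
\item[(a)] \emph{Perturbation bound:} $|R_\rho(x,\zeta)-Q_\rho(x)|\le L\zeta$ for all $x\in X$, $\zeta>0$. This follows from Assumption~\ref{ass:mapping}(b), Assumption~\ref{ass:Lip}(i) and Assumption~\ref{ass:Lip_1}(i), together with the fact that $v\mapsto\max\{0,v_1,\dots,v_m\}$ is $1$-Lipschitz in the sup-norm. The error is then at most $L_F\zeta+\frac1\rho\max_i L_{H_i}\zeta=L\zeta$.
\item[(b)] \emph{Regularity of the limit function:} $Q_\rho$ is Lipschitz on $X$ with constant $\bar L$, by Assumption~\ref{ass:Lip}(ii) and Assumption~\ref{ass:Lip_1}(ii) via the same max argument.
\end{itemize}
Lower boundedness comes from $Q_\rho\ge\bar F\ge F_{low}$. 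Consequently $R_\rho(x,\zeta)\ge F_{low}-L\zeta$, exactly as in Remark~\ref{Remark}.

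Next I would re-run Propositions~\ref{prop:nocycle_projectedLS}, \ref{prop:boundzeta_complete}, \ref{prop:error_to_zero} and \ref{prop:alphastozero}, with $P$ replaced by $R_\rho$, $\bar F$ by $Q_\rho$, $L_F$ by $L$ and $L_{\bar F}$ by $\bar L$.
\begin{itemize}
\item Proposition~\ref{prop:boundzeta_complete} is purely about the update rules, so it carries over verbatim.
\item Propositions~\ref{prop:nocycle_projectedLS} and \ref{prop:error_to_zero} use only lower boundedness of $R_\rho(\cdot,\hat\zeta)$.
\item Proposition~\ref{prop:alphastozero} uses the sandwich $Q_\rho(x_{k+1})\le Q_\rho(x_k)+2L\zeta_k-\gamma\alpha_k^2$, which follows from (a). The case analysis then proceeds with threshold $(\gamma/(2L))^3$.
\end{itemize}
This gives $\zeta_k\to0$ and $\max\{\alpha_k,\tilde\alpha_k\}\to0$.

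Finally, I would repeat the proof of Theorem~\ref{prop:mainconv_perturbed}. Suppose there is $\bar d\in D(\bar x)\cap S(0,1)$ with $Q_\rho^\circ(\bar x;\bar d)<0$. Build $\eta_k$ and $v_k$ from the failed tests \eqref{eq:fall1}/\eqref{eq:fall2} with $R_\rho$ in place of $P$. Use Lemma~\ref{lem:technicallemma1} and Proposition~\ref{lem:technicallemma2} to get feasibility of $x_k+\eta_k\bar d$ and $v_k\to\bar d$. The chain of inequalities then yields
\[
Q_\rho^\circ(\bar x;\bar d)\ge\limsup_{k\in\bar K}\bigl(-\gamma\eta_k-2L\eta_k^2-\bar L\|v_k-\bar d\|\bigr)=0,
\]
where $\zeta_k/\eta_k\le\eta_k^2$ is obtained exactly as in \eqref{boundeta_1}--\eqref{boundeta2}. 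This contradiction shows that $\bar x$ is Clarke–Jahn stationary for \eqref{prob:penalized_upperlevel}, and Proposition~\ref{prop:equivalenza} concludes.

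The main obstacle is not the algorithmic part, which transfers mechanically, but making sure Proposition~\ref{prop:equivalenza} applies. It needs $\bar x\in X$, which holds since $X$ is closed, and EMFCQ (Assumption~\ref{ass:emfcq}), which holds throughout the section. One must also note that $\varrho$ depends only on the problem data and not on the iterates, so that it can be chosen before running the algorithm. Feasibility of $\bar x$ with respect to $\bar H\le0$ is not proved directly; it is part of the conclusion of \cite[Proposition 3.6]{fasano2014linesearch} for $\rho\le\varrho$. I would state this explicitly rather than try to prove it.
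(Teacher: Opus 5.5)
Your proposal is correct and takes the same route as the paper. The paper's proof is a single line: it invokes Theorem~\ref{prop:mainconv_perturbed}, with $R_\rho$, $Q_\rho$, $L$, $\bar L$ in place of $P$, $\bar F$, $L_F$, $L_{\bar F}$, and then Proposition~\ref{prop:equivalenza}. You spell out what that reduction leaves implicit: the bound $|R_\rho(x,\zeta)-Q_\rho(x)|\le L\zeta$, the Lipschitz constant $\bar L$ of $Q_\rho$, lower boundedness, and the observation that $\varrho$ depends only on problem data, so it can be fixed before the algorithm is run.
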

\textbf{Proof.} The proof directly follows from Theorem \ref{prop:mainconv_perturbed} and Proposition \ref{prop:equivalenza}. $\hfill\Box$
\par\smallskip
We now consider the case $\bar\zeta > 0$.

\begin{Teorema}\label{prop:conv_inexact_constrained}
Assume that \(\{d_k\}\) is dense in the unit sphere. Then a threshold value $\varrho > 0$ exists such that for every $\rho \in (0, \varrho]$, the sequence \(\{x_k\}\) generated by Algorithm DFN-LLA when $\bar\zeta>0$ is eventually constant, with the unique limit point being \((\delta, \epsilon)\)-Goldstein-KKT stationary for problem \eqref{prob:generalconstraint}, with
\[
\epsilon =   4\gamma\alpha_{\min} + 8\frac{L \bar\zeta}{\alpha_{\min}}\quad \text{and} \quad \delta = \alpha_{\min}.
\]
\end{Teorema}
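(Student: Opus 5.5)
The plan is to run the proof of Theorem~\ref{prop:convGoldstainstaz} again, with $P(x,\zeta)$ replaced by $R_\rho(x,\zeta)$ and $\bar F$ replaced by $Q_\rho$. The final step then turns Goldstein stationarity for the penalized problem \eqref{prob:penalized_upperlevel} into Goldstein-KKT stationarity for \eqref{prob:generalconstraint}, using the exactness mechanism behind Proposition~\ref{prop:equivalenza}.

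\textbf{Step 1: the basic Lipschitz estimate.} I first check that $|R_\rho(x,\zeta)-Q_\rho(x)|\le L\zeta$. The map $t\mapsto\max\{0,t_1,\dots,t_m\}$ is $1$-Lipschitz in the max-norm. Hence Assumption~\ref{ass:Lip}(i), Assumption~\ref{ass:Lip_1}(i) and Assumption~\ref{ass:mapping}(b) give
\[
|R_\rho(x,\zeta)-Q_\rho(x)|\le \Bigl(L_F+\tfrac{1}{\rho}\max_i L_{H_i}\Bigr)\zeta = L\zeta .
\]
In the same way, $Q_\rho$ is Lipschitz with constant $\bar L$, by Assumption~\ref{ass:Lip}(ii) and Assumption~\ref{ass:Lip_1}(ii). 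Also, $R_\rho\ge P$, so $R_\rho$ is bounded below by Remark~\ref{Remark}.

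\textbf{Step 2: finite termination and the Goldstein bound for $Q_\rho$.} With Step 1 in hand, Propositions~\ref{prop:nocycle_projectedLS}, \ref{prop:boundzeta_complete}, \ref{prop:Sfinite} and \ref{prop:costantzeta} carry over verbatim. So $x_k$ is eventually equal to some $\bar x$, every late iteration is unsuccessful with $\tilde\alpha_k=\alpha_{\min}$, and $\zeta_k=\bar\zeta$. Repeating the chain of inequalities leading to \eqref{app2}, with $L$ in place of $L_F$ and $\bar L$ in place of $L_{\bar F}$, gives
\[
Q_\rho(\bar x) < Q_\rho(\bar x+d)+2\gamma\alpha_{\min}^2+4L\bar\zeta
\]
for all $d$ with $\|d\|=\alpha_{\min}$ and $\bar x+d\in X$. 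Next I minimize
\[
\tilde G_{\bar x}(d)=Q_\rho(\bar x+d)+\Bigl(2\gamma+4L\bar\zeta/\alpha_{\min}^2\Bigr)\|d\|^2
\]
over $\|d\|\le\alpha_{\min}$, $\bar x+d\in X$. This yields a point $\tilde x=\bar x+\bar d$ with $\|\bar d\|<\alpha_{\min}$. Moreover, $\tilde x$ is a local minimizer, hence Clarke-Jahn stationary, of the box-constrained problem of minimizing $\psi(x)=Q_\rho(x)+c\|x-\bar x\|^2$, where $c=2\gamma+4L\bar\zeta/\alpha_{\min}^2$.

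\textbf{Step 3: from the penalized problem to KKT multipliers (main obstacle).} The key observation is that $\tilde x$ is an exact stationary point of a penalty function for the perturbed problem
\[
\min_{x\in X}\ \bar F(x)+c\|x-\bar x\|^2 \quad\text{s.t.}\quad \bar H(x)\le 0 .
\]
The constraints of this problem are the same $\bar H$, and the objective differs from $\bar F$ only by a smooth term. I will apply Proposition~\ref{prop:equivalenza}, i.e. \cite[Proposition 3.6]{fasano2014linesearch}, to this perturbed problem. This gives that $\tilde x$ is feasible and admits multipliers $\lambda_i\ge0$ with $\lambda_i\bar H_i(\tilde x)=0$, and an element
\[
\xi\in\partial\bar F(\tilde x)+\sum_i\lambda_i\partial\bar H_i(\tilde x)
\]
satisfying $(\xi+2c\bar d)^\top s\ge0$ for all $s\in D(\tilde x)$. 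Dividing by $\|s\|$ and using $\|\bar d\|<\alpha_{\min}$ then produces $\epsilon=4\gamma\alpha_{\min}+8L\bar\zeta/\alpha_{\min}$ and $\delta=\alpha_{\min}$, as claimed.

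The delicate point is that the threshold $\varrho$ must not depend on $\bar x$, since $\bar x$ is unknown in advance. I will handle this by inspecting the proof of exactness in \cite{fasano2014linesearch}. That proof rests on EMFCQ (Assumption~\ref{ass:emfcq}) and on compactness of $X$, through a contradiction argument over sequences $\rho_j\to0$ with stationary points of the penalty that are not KKT. The added term $c\|x-\bar x\|^2$ has Clarke gradient bounded by $2c\,\mathrm{diam}(X)$ uniformly in $\bar x$, so the same argument should give a uniform $\varrho$. If that fails, the fallback is to use EMFCQ directly near $\tilde x$: show that for small $\rho$ the point $\tilde x$ must be feasible, and then read off the multipliers $\lambda_i\in[0,1/\rho]$ from the structure of the Clarke subdifferential of the max term.
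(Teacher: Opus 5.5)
Your Steps 1--2 coincide with the paper's proof, and Step 3 is the same appeal to penalty exactness that the paper makes (``provided that the penalty parameter is sufficiently small''). You are right to flag this as the delicate point, but your repair fails for a reason you overlook. The proximal coefficient $c=2\gamma+4L\bar\zeta/\alpha_{\min}^2$ is not a fixed constant, because $L=L_F+\rho^{-1}\max_i L_{H_i}$. So $c$ grows like $1/\rho$, at exactly the rate of the penalty term. The perturbed problem to which you want to apply Proposition~\ref{prop:equivalenza} therefore changes with $\rho$, both through $c$ and through $\bar x$, which makes the threshold circular. This is intrinsic, since the noise $|R_\rho-Q_\rho|\le L\bar\zeta$ is itself of order $\bar\zeta/\rho$. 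Concretely, suppose $\tilde x=\bar x+\bar d$ is infeasible and $s\in D(\tilde x)$ is a unit EMFCQ direction with margin $\mu>0$, i.e.\ $\xi^\top s\le-\mu$ for all $\xi\in\partial\bar H_i(\tilde x)$ with $i$ attaining the max. Then stationarity of $\psi$ only yields
\[
0\le\psi^{Cl}(\tilde x,s)< L_{\bar F}+4\gamma\alpha_{\min}+\frac{8L_F\bar\zeta}{\alpha_{\min}}+\frac{1}{\rho}\Bigl(\frac{8\max_i L_{H_i}\bar\zeta}{\alpha_{\min}}-\mu\Bigr).
\]
This is contradictory for small $\rho$ only if $\mu>8\max_iL_{H_i}\bar\zeta/\alpha_{\min}$. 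That is a condition on the data, $\bar\zeta$ and $\sigma$, and shrinking $\rho$ cannot produce it. Your fallback runs into the same comparison. You also never show that $\bar x$ itself is feasible, which Definition~\ref{def:golsteinstazionario_kkt} requires. This can fail when $\bar\zeta>0$: with $F\equiv0$, the algorithm never leaves a slightly infeasible $x_0$ at which the perturbed constraint value happens to be nonpositive.

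In fact the statement is not provable as written. Take $n_x=n_y=m=1$, $X=[0,10]$, $F\equiv0$, $H(x,y)=\mu(5-x)+y$, and $f(x,z)=z^2$ on ${\cal Z}=\mathbb{R}$. Then $y(x)=0$ and $\bar H(x)=\mu(5-x)$, all standing assumptions hold, and EMFCQ holds with $d=1$. Let $\tilde y(x,\zeta)=\max\{-\zeta,\min\{\zeta,\mu(x-2)\}\}$, which satisfies Assumption~\ref{ass:mapping}. Choose $\mu=\bar\zeta=\zeta_0=10^{-3}$, $\tilde\alpha_0=\bar\zeta^{1/3}=0.1$, $x_0=2$ and $d_k=(-1)^k$. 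Then $R_\rho(x,\bar\zeta)=3\mu/\rho$ for all $x\in[1,3]$, so every trial point fails the sufficient-decrease test and $x_k\equiv2$ for every $\rho>0$. Yet the feasible set is $[5,10]$, so $B_{\alpha_{\min}}(2)$ contains no feasible point. An additional hypothesis is therefore needed, e.g.\ a uniform EMFCQ margin $\mu>8\max_iL_{H_i}\bar\zeta/\alpha_{\min}$, together with dropping feasibility of $\bar x$ from the definition. Under that hypothesis, your fallback is the right and simpler route, with no need to re-prove exactness. The estimate above gives feasibility of $\tilde x$ for small $\rho$. A minimax argument gives an element of $\partial\psi(\tilde x)$ that is nonnegative on $D(\tilde x)$. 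Through the Clarke subdifferential of the max term, that element decomposes into multipliers $\lambda_i\in[0,1/\rho]$ with $\lambda_i\bar H_i(\tilde x)=0$.
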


\textbf{Proof.} 
The proof follows the same structure as that of Theorem~\ref{prop:convGoldstainstaz}, with $R_\rho$ replacing~$P$, $Q_\rho$ replacing~$\bar F$, and the Lipschitz constants $L$, $\bar L$ (from Assumption~\ref{ass:Lip_1}) replacing $L_F$, $L_{\bar F}$, respectively.
By Proposition~\ref{prop:costantzeta}, $\{x_k\}$ is eventually constant with limit~$\bar{x}$, and $\tilde\alpha_k = \alpha_{\min}$ for $k$ sufficiently large. Repeating the argument leading to~\eqref{app2} with the above substitutions, we obtain
\begin{equation}\label{app2_vinc}
Q_{\rho}(\bar x) < Q_{\rho}(\bar x + d) + 2\gamma\alpha_{\min}^2 + 4 L\bar \zeta
\end{equation}
for all $d$ such that $\|d\|=\alpha_{\min}$ and $\bar x+d\in X$.
Defining
$\tilde Q_{\rho,\bar x}(d) = Q_{\rho}(\bar x+d) + 2\left(\gamma + 2\frac{L\bar\zeta}{\alpha_{\min}^2}\right)\|d\|^2$,
the same argument as in Theorem~\ref{prop:convGoldstainstaz} yields a point $\bar d$ with $\|\bar d\| < \alpha_{\min}$ that is a local minimizer of: 
\begin{equation}\label{app3_vinc}
 \begin{array}{ll}
  \displaystyle \min_{d}  & \tilde Q_{\rho,\bar x}(d) \\
   s.t. & \bar x+d\in X.\\
 \end{array}
\end{equation}
This implies the vector $\bar d$ satisfies Definition (\ref{def:puntostazionario}), namely vector  $\xi\in\partial\tilde Q_{\rho, \bar x}(\bar d)$ exists such that
\[
\xi^\top s \geq 0,\quad\forall\  s\in \bar D(\bar d),
\]
reminding that $\bar D(\bar d)=D(\bar x+\bar d).$

Recalling the exactness properties of the penalty function, provided that the penalty parameter is sufficiently small, $\bar x+\bar d$ is a Clarke-KKT stationary point of the problem
\[
\begin{array}{ll}
\displaystyle \min_{d} & \bar F(\bar x+d) + 2\left(\gamma + 2\dfrac{L\bar\zeta}{\alpha_{\min}^2}\right)\|d\|^2 \\[12pt]
\text{s.t.} & \bar H(\bar x+d) \leq 0, \,\bar x+d \in X.
\end{array}
\]
that is, multipliers \(\bar{\lambda}_1, \ldots, \bar{\lambda}_m \in \mathbb{R}\) exist, with  
\[
\bar{\lambda}_i \geq 0 \quad \text{and} \quad \bar{\lambda}_i \bar H_i(\bar{x}+ \bar d) = 0, \quad \bar H_i(\bar{x}+\bar d) \leq 0\quad \forall i = 1, \ldots, m,
\]  
such that for every \(s \in D(\bar{x}+\bar d)\),  
\[
\max\left\{s^\top \xi : \xi \in \partial \bar F(\bar{x}+\bar d) + \left(4\gamma + 8 \dfrac{L\bar\zeta}{\alpha_{\min}^2}\right)\bar d+\sum_{i=1}^m \bar{\lambda}_i \partial \bar H_i(\bar{x}+\bar d)\right\} \geq 0.
\]

we get that a vector  $\bar\xi\in\partial \bar F(\bar{x}+\bar d) +\sum_{i=1}^m \bar{\lambda}_i \partial \bar H_i(\bar{x}+\bar d)$ exists such that
\[
\bar \xi^\top s \geq -\left(4\gamma + 8 \dfrac{L\bar\zeta}{\alpha_{\min}^2}\right)\bar d^Ts,\quad\forall\  s\in D(\bar x+\bar d).\]
Finally, recalling that $\|\bar d\|<\alpha_{\min}$, we obtain that $\bar x+\bar d \in B_{\alpha_{\min}}(\bar x)$
and  $\bar\xi\in\partial \bar F(\bar{x}+\bar d) +\sum_{i=1}^m \bar{\lambda}_i \partial \bar H_i(\bar{x}+\bar d)$  exists such that
\[
\bar \xi^\top \frac{s}{\| s\|} \geq -\left(4\gamma \alpha_{\min} + 8 \dfrac{L\bar\zeta}{\alpha_{\min}}\right),\quad\forall\  \frac{s}{\| s\|}\in D(\bar x+\bar d).
\] 
This proves the theorem. $\hfill\Box$
\par\smallskip
Theorem~\ref{prop:conv_inexact_constrained} shows that Algorithm DFN-LLA converges to an approximate Goldstein-KKT stationary point, with approximation quality controlled by the lower-level limit tolerance. The bound $\epsilon$ depends on the combined Lipschitz constant $L$, accounting for both objective and constraint. As before, smaller $\bar\zeta$ yields higher-quality approximate stationary points, with $\alpha_{\min} = (\sigma\bar\zeta)^{1/3}$.

\section{Numerical Results}\label{sec:numerical}

In this section, a computational study is conducted to assess the performance of the proposed algorithm and its variants.

\subsection{Test problems collection}

The algorithms were evaluated on 160 small-scale bilevel optimization problems selected from the BOLIB MATLAB library \cite{Zhou2020}. Starting from the original 173 problems, instances with $n_x \geq 1$ were retained, using the default starting points provided in the library. All algorithms were implemented in Python 3.12.3. The distribution of test problems with respect to dimension and constraint structure is reported in Table~\ref{tab:nvars_count}. The source code is publicly available at \href{https://github.com/EdoardoCesaroni/DFN-LLA}{\texttt{github.com/EdoardoCesaroni/DFN-LLA}}.

All numerical experiments were performed on a virtual machine running Ubuntu 20.04.6 LTS. The computational environment was based on a server equipped with two NUMA sockets and an Intel Xeon Gold 6252N CPU @ 2.30GHz processor, providing a total of 16 logical cores. The system was endowed with 512 GB of RAM. The implementations relied mainly on NumPy for numerical linear algebra and computing operations. No GPU acceleration was employed in the computational runs.

\begin{table}[!htbp]
\centering
\small
\begin{tabular}{ccccccccccccc}
\toprule
$n_x + n_y$ & 2 & 3 & 4 & 5 & 6 & 8 & 9 & 10 & 11 & 12 & 20 & 21 \\
\midrule
\# problems   & 69 & 29 & 29 & 10 & 4 & 3 & 1 & 7 & 1 & 1 & 2 & 4 \\
\bottomrule
\end{tabular}
\par\vspace{4pt}
\begin{tabular}{ccc}
\toprule
\textbf{Bound-constr.} & \textbf{General constr.} & \textbf{Unconstrained} \\
\midrule
78 & 51 & 31 \\
\bottomrule
\end{tabular}
\caption{Distribution of problems by total number of variables (top) and constraint type (bottom).}
\label{tab:nvars_count}
\end{table}

\subsection{Performance evaluation methodology}
To evaluate and compare the algorithms, the procedure for constructing performance and data profiles proposed in \cite{More2009} has been adapted to the bilevel optimization setting. The main modification concerns the definition of feasibility and convergence criteria, which must simultaneously account for the upper-level objective function and the KKT residuals of the lower-level problem. For each lower-level solution $\tilde y(x,\zeta)$ associated with an upper-level point $x$, a comprehensive KKT residual is computed:
{\small
\[
\text{KKT}_{\mathrm{res}} =
\max\left\{
\left\| \nabla_y \mathcal{L}\big(x,\tilde y(x,\zeta),\lambda\big) \right\|_{\infty},
\;
\left\| \max\{-\lambda,\, g\big(x,\tilde y(x,\zeta)\big)\} \right\|_{\infty},
\;
\left\| g\big(x,\tilde y(x,\zeta)\big) \right\|_{\infty}
\right\}
\]
} where $\nabla_y \mathcal{L}$ denotes the gradient of the lower-level Lagrangian with respect to $y$, and $\lambda$ are the Lagrange multipliers associated with the constraints $g(x,y) \le 0$ (including both general and bound constraints, for notational simplicity).

A point $(x, \tilde y(x,\zeta))$ is deemed valid if both of the following conditions are satisfied:
\begin{itemize}

\item[-] The upper-level violation is sufficiently small: $\left\| H\big(x, \tilde y(x,\zeta)\big) \right\|_{\infty} \le 10^{-4}$.

\item[-] The lower-level KKT residual achieves the prescribed accuracy:
$\text{KKT}_{\mathrm{res}} \le \bar{\zeta}$.

\end{itemize}
If at least one of these conditions is violated, the corresponding objective value is set to $+\infty$ for the purpose of constructing the profiles. This ensures that only points satisfying both feasibility and lower-level optimality requirements are regarded as successful, thus enabling a fair comparison among solvers operating at different accuracy levels.

For each problem $p$, a cutoff value is defined as $c_p = F_{\min}^p + \tau (F_{\max}^p - F_{\min}^p)$,
where $F_{\min}^p$ is the best valid objective value across solvers and $\tau > 0$ controls how close to the best value a solver must reach.
Since points satisfying the lower-level accuracy condition  $\text{KKT}_{\mathrm{res}} \le \bar{\zeta}$ typically lie in a neighborhood of convergence, the values of $F_{\min}^p$ and $F_{\max}^p$ may be extremely close, so the latter is computed using the relaxed condition $\text{KKT}_{\mathrm{res}} \le 10^{2}\bar{\zeta}$. 

Both data profiles and performance profiles employ the number of lower-level objective function evaluations as the primary computational metric, as this measure better reflects the computational complexity of solving the problem compared to simply counting upper-level function evaluations. For data profiles, this count is normalized by $n_x \times n_y + 1$.

\subsection{Algorithms implementation details and parameter settings}\label{subsec:algo_impl}

The common parameters are: $\zeta_0 = 0.1$, $\alpha_{\min} = (\sigma \bar{\zeta})^{1/3}$ with $\sigma = 10^{-18}$, $\gamma = 10^{-6}$, $\theta = 0.5$, $\delta = 0.5$.
Our approach to construct the search directions $d_k$ is to employ a dense sequence of orthonormal bases. Specifically, a unit-norm direction is computed by using the Sobol sequence \cite{sobol} and an orthonormal basis is constructed starting from this direction. The algorithm explores all $n$ directions and then generates a new orthonormal basis from a new unit-norm Sobol direction. This should ensure that the resulting directions allow the algorithm to analyze the variable space in a sufficiently uniform and dense way.

For problems involving general upper-level constraints $H(x, y) \leq 0$, we adopt an exact penalty approach where the penalty parameters are managed through a vector $\rho \in \mathbb{R}^m$, following the strategy proposed in \cite{fasano2014linesearch}. The perturbed penalty function is:
\[
R_{\rho}(x, \zeta) = F(x,\tilde y(x,\zeta)) +  \max\{0, \frac{1}{\rho_1}H_1(x, \tilde y(x, \zeta)), \dots, \frac{1}{\rho_m}H_m(x, \tilde y(x, \zeta))\}.
\]
The initial penalty parameters are set according to the constraint violation at the starting point:
\[
(\rho_0)_i = 
\begin{cases}
10^{-3} & \text{if } \max\{0, H_i(x_0, \tilde y (x_0,\zeta_0))\} < 1, \\
10^{-1} & \text{otherwise},
\end{cases}
\quad i = 1, \ldots, m.
\]
During optimization, if the weighted violation  $(\rho_k)_i H_i(x_k, \tilde y(x_k,\zeta_k)) > \max\{\alpha_k, \tilde\alpha_k\}$, then $(\rho_{k+1})_i = 10^{-2} (\rho_k)_i$; otherwise $(\rho_{k+1})_i = (\rho_k)_i$. 

At each upper-level iteration $k$, the lower-level problem 
\[
\min_{y} \ f(x_k, y) \quad \text{subject to} \quad g(x_k, y) \leq 0
\]
is solved using Uno (Unifying Nonlinear Optimization) solver \cite{VanaretLeyffer2024}, where $g(x,y) \leq 0$ encompasses both general inequality constraints and simple bound constraints on $y$. Uno solver is called with a tolerance parameter equal to the current error parameter $\zeta_k$, which is used to control the KKT residual for the lower-level solution. The initial point for solving the lower-level problem at iteration $k$ is set to $y_{k-1}$, the lower-level solution obtained at the previous upper-level iteration.

To take an initial step toward a computational implementation of the proposed methodological framework we compare the described \textbf{DFN-LLA} with several of its possible variants. Each of these variants corresponds to an optimization strategy that differs from that of DFN-LLA. Specifically, we focus on the following three algorithms.
\par\smallskip
\textbf{MS-DFN-LLA} (Multi-Stepsize DFN-LLA), inspired by CS-DFN~\cite{fasano2014linesearch}, exploits the information obtained by sampling the objective function along orthonormal directions. Unlike DFN-LLA, the orthonormal basis is not regenerated immediately after all directions have been explored once. Instead, the algorithm continues to sample the objective function along the same directions using distinct step sizes for each search direction. Directions yielding significant progress retain larger step sizes, whereas directions associated with limited improvement are assigned smaller ones, thereby capturing direction-dependent information about the local geometry of the objective function.
The basis is regenerated only when all step sizes fall below a prescribed threshold $\eta > 0$. This strategy is motivated, for instance, by \cite[Theorem 3.2]{brilli2024complexity}, where the maximum step size can be interpreted as an upper bound on a stationarity measure at the current iterate. Hence, this condition may indicate that sufficient reduction has been achieved along the current directions. When a new basis is generated, the tentative step sizes are initialized to the average of the previous ones and the threshold $\eta$ is reduced by a factor of $0.5$, enabling progressively finer exploration of the search space.
\par\smallskip
\textbf{MS-DFN-NL-LLA} (No-Linesearch variant) removes the extrapolation phase of MS-DFN-LLA. In successful iterations, a step is accepted as soon as it produces sufficient decrease in the objective function, and the new tentative step size is simply divided by $\delta$. The numerical experiments involving MS-DFN-NL-LLA assess whether, once a descent direction has been identified, extrapolating along that direction is more effective than immediately exploring a new one.

\par\smallskip
\textbf{MS-DFN-LLF} (Fixed accuracy variant) keeps the lower-level tolerance fixed at $\zeta_k = \bar{\zeta}$ for the entire optimization procedure.
The numerical experiments carried out with MS-DFN-LLA and MS-DFN-LLF should reveal whether, and under which circumstances, a dynamic control of the accuracy in solving the lower-level problem is advantageous compared to consistently enforcing a high accuracy.

\par\smallskip
The stopping criterion for each variant is satisfied when at least one of the following conditions is met: the number of upper-level function evaluations reaches $1500$, the CPU time exceeds $9000$ seconds, or all step sizes are smaller than $\alpha_{\min}$.

\subsection{Effect of multiple step sizes}\label{subsec:multiple_stepsizes}

To assess the benefit of maintaining multiple step sizes compared to using a single step size for all directions, we consider only problems with $n_x \geq 2$ upper-level variables, since for univariate problems the two configurations are equivalent. Results are reported for a lower-level target precision of $\bar{\zeta} = 10^{-6}$; similar behavior was observed for other precision levels.

\begin{figure}[!htbp]
    \centering
    \setlength{\tabcolsep}{1pt}
    \begin{tabular}{@{}cccc@{}}
        \multicolumn{2}{c}{\small\textit{MS-DFN-LLA vs DFN-LLA}} &
        \multicolumn{2}{c}{\small\textit{MS-DFN-LLA vs MS-DFN-NL-LLA}} \\[2pt]
        \includegraphics[width=0.225\textwidth]{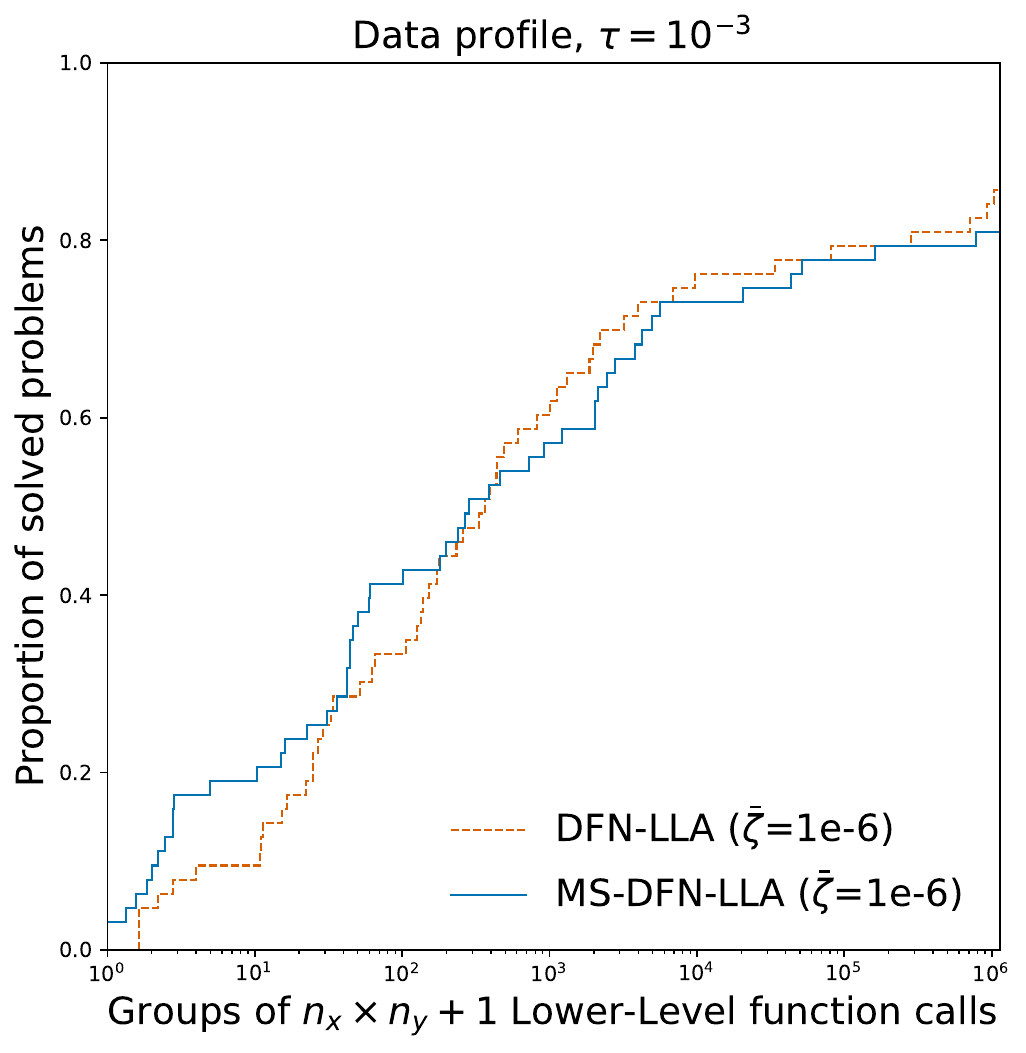} &
        \includegraphics[width=0.225\textwidth]{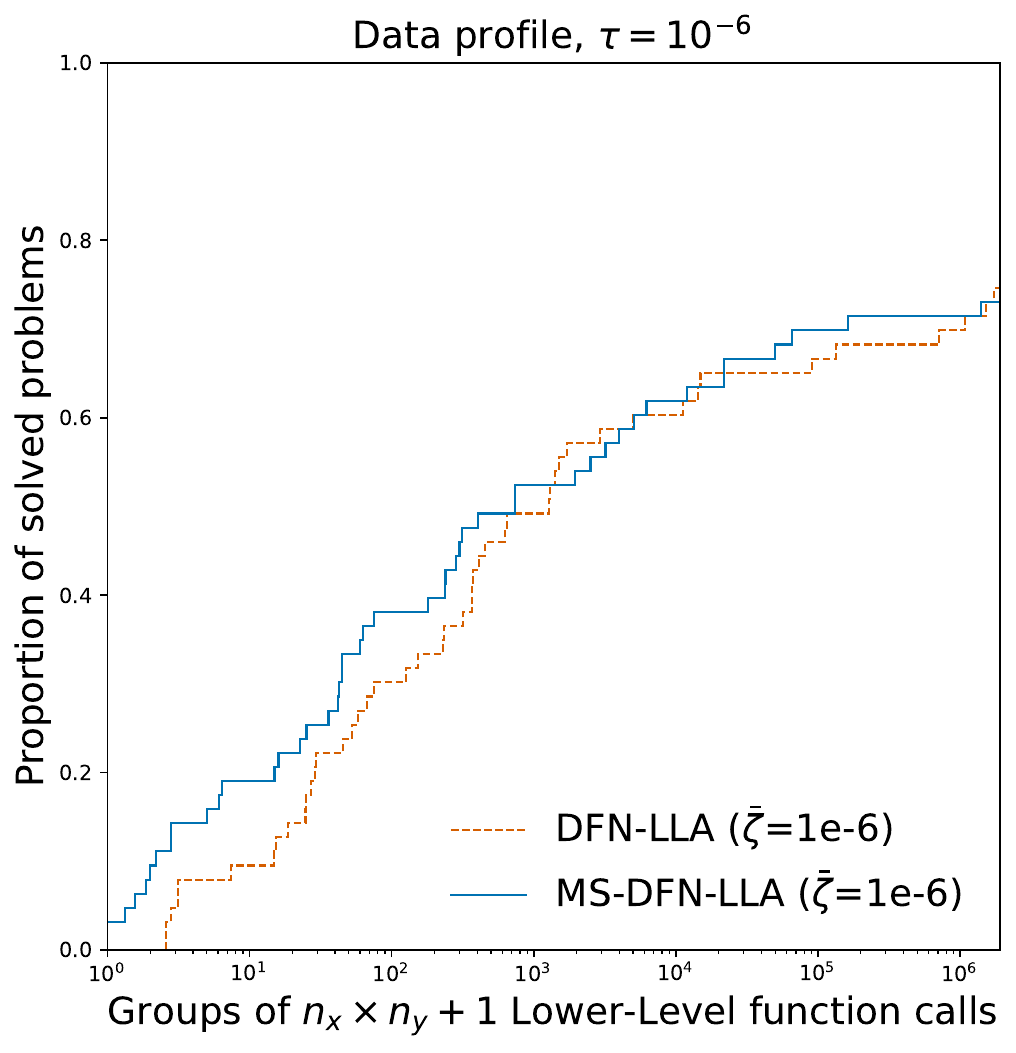} &
        \includegraphics[width=0.225\textwidth]{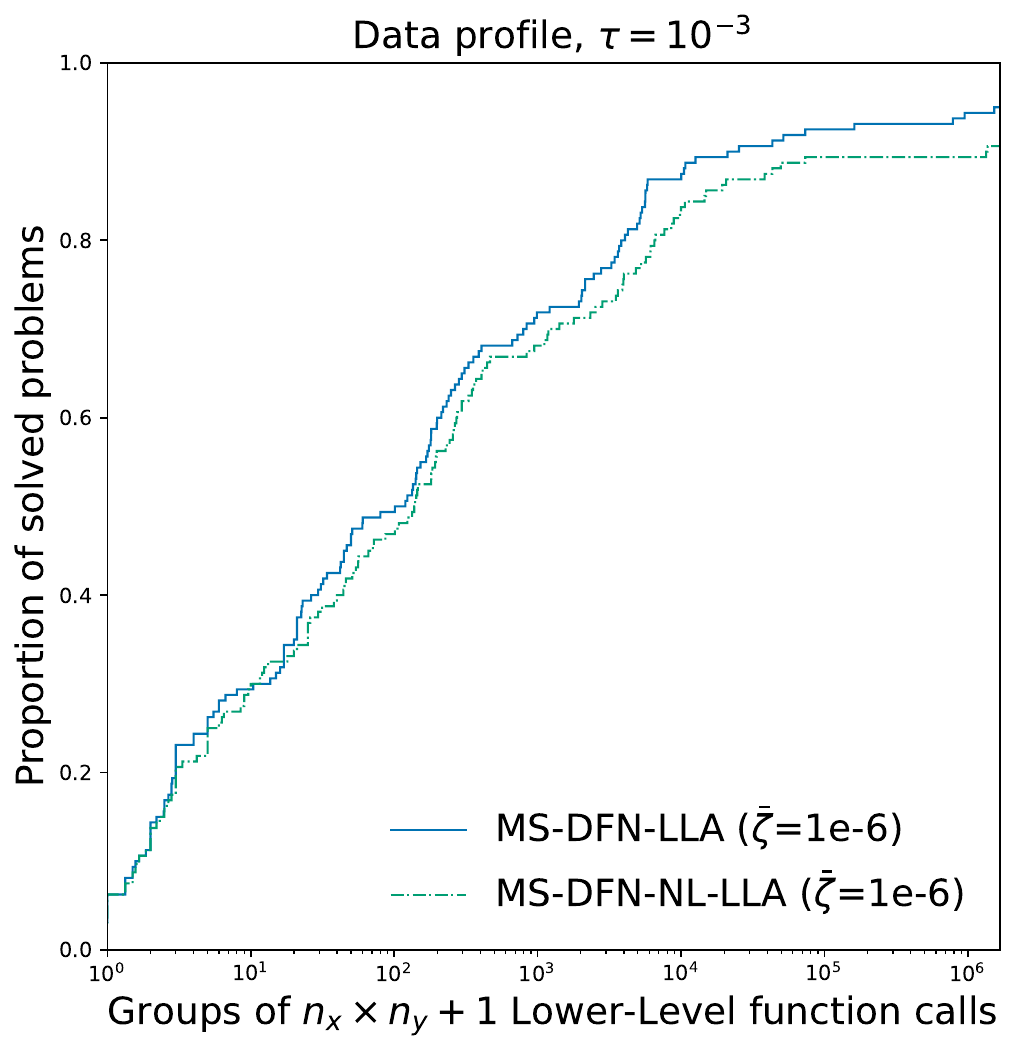} &
        \includegraphics[width=0.225\textwidth]{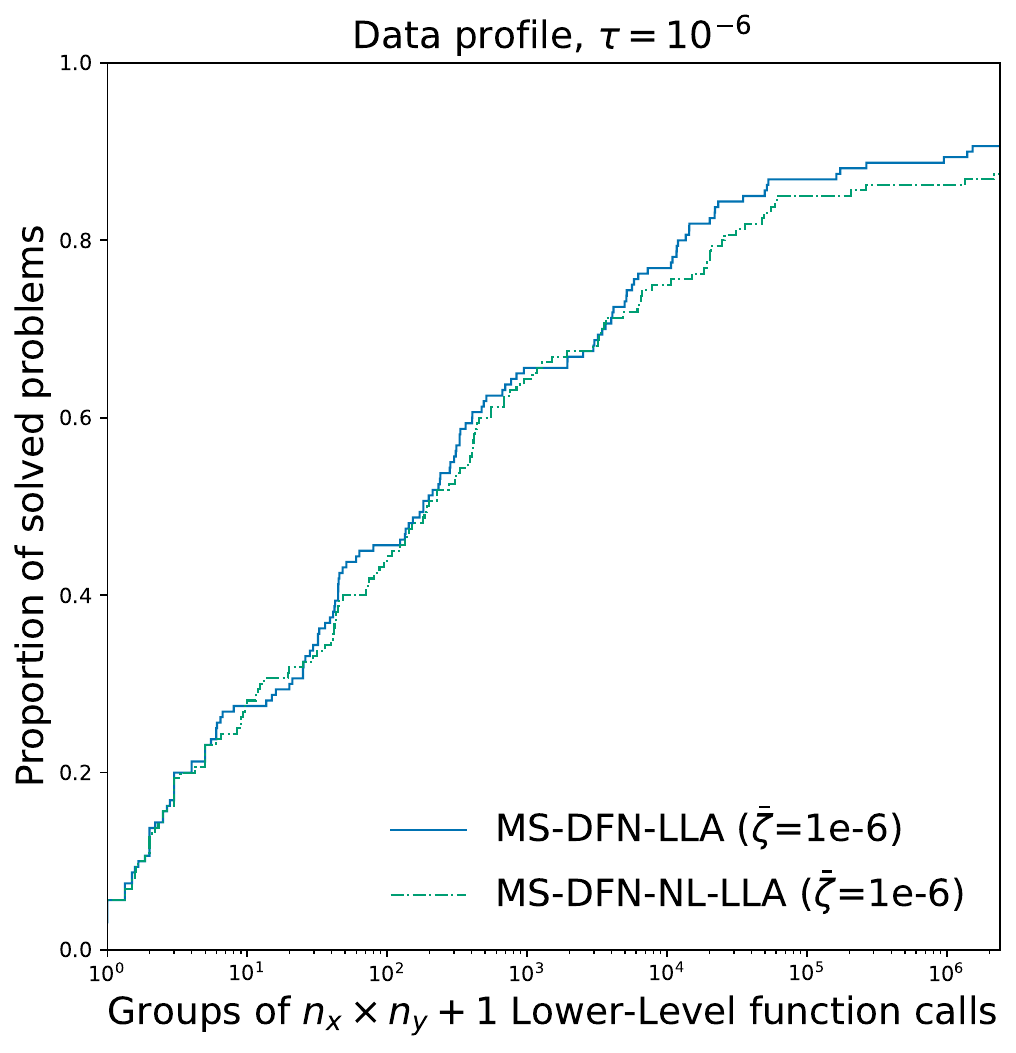} \\[-2pt]
        \includegraphics[width=0.225\textwidth]{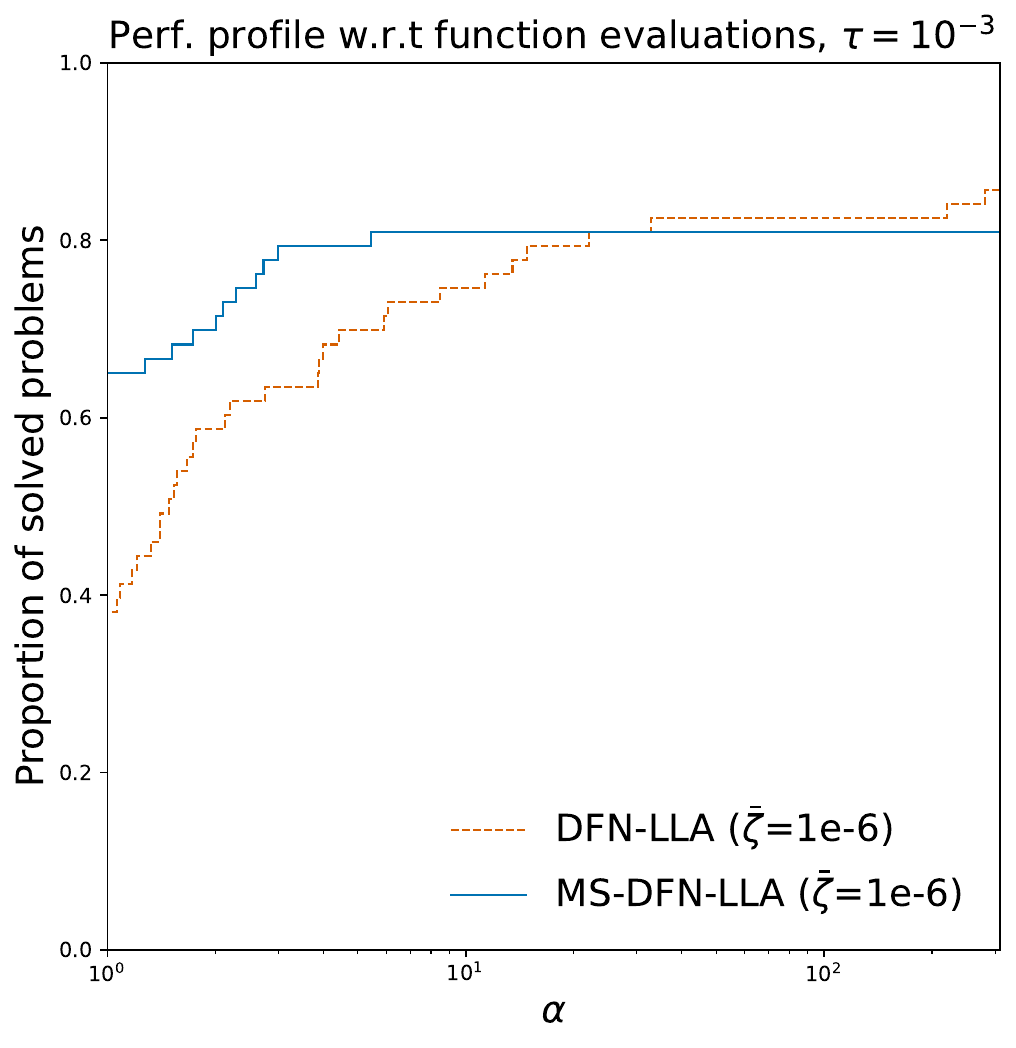} &
        \includegraphics[width=0.225\textwidth]{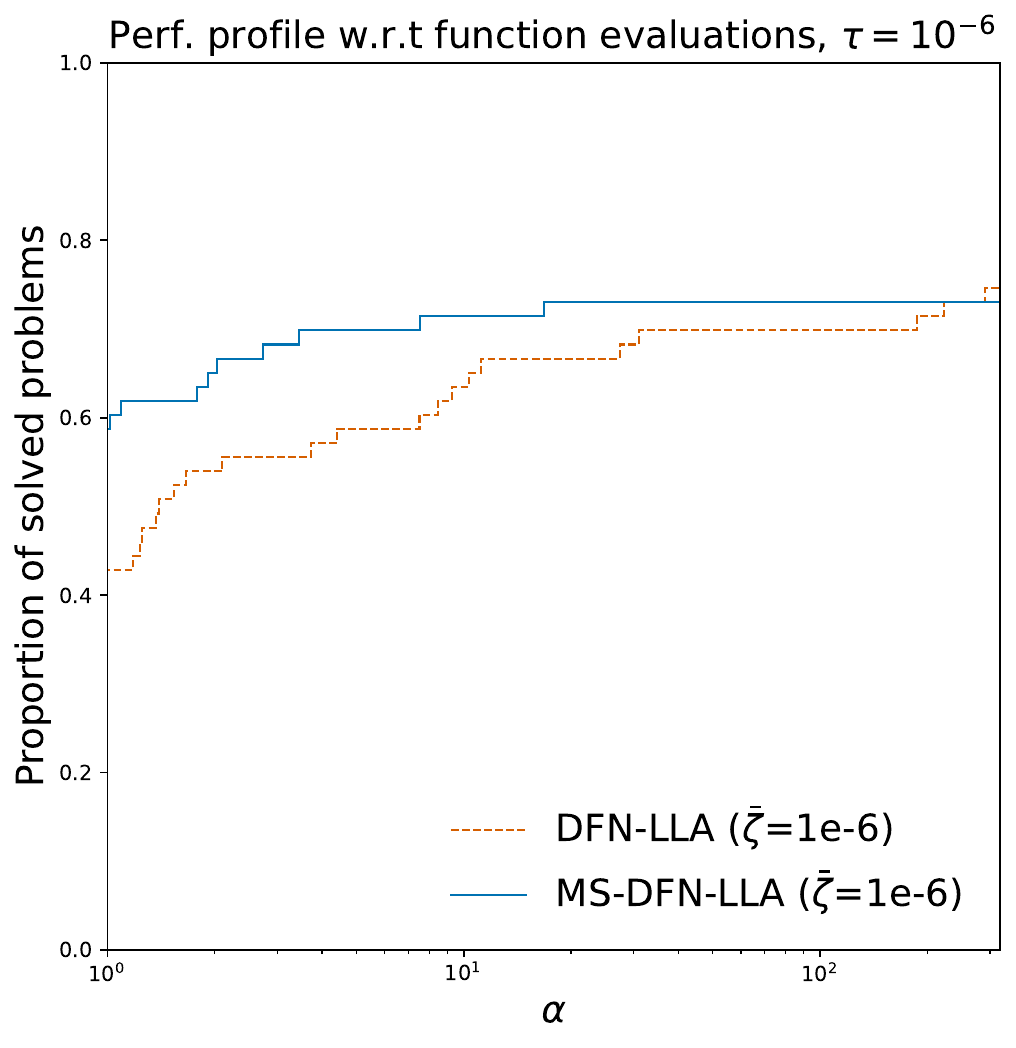} &
        \includegraphics[width=0.225\textwidth]{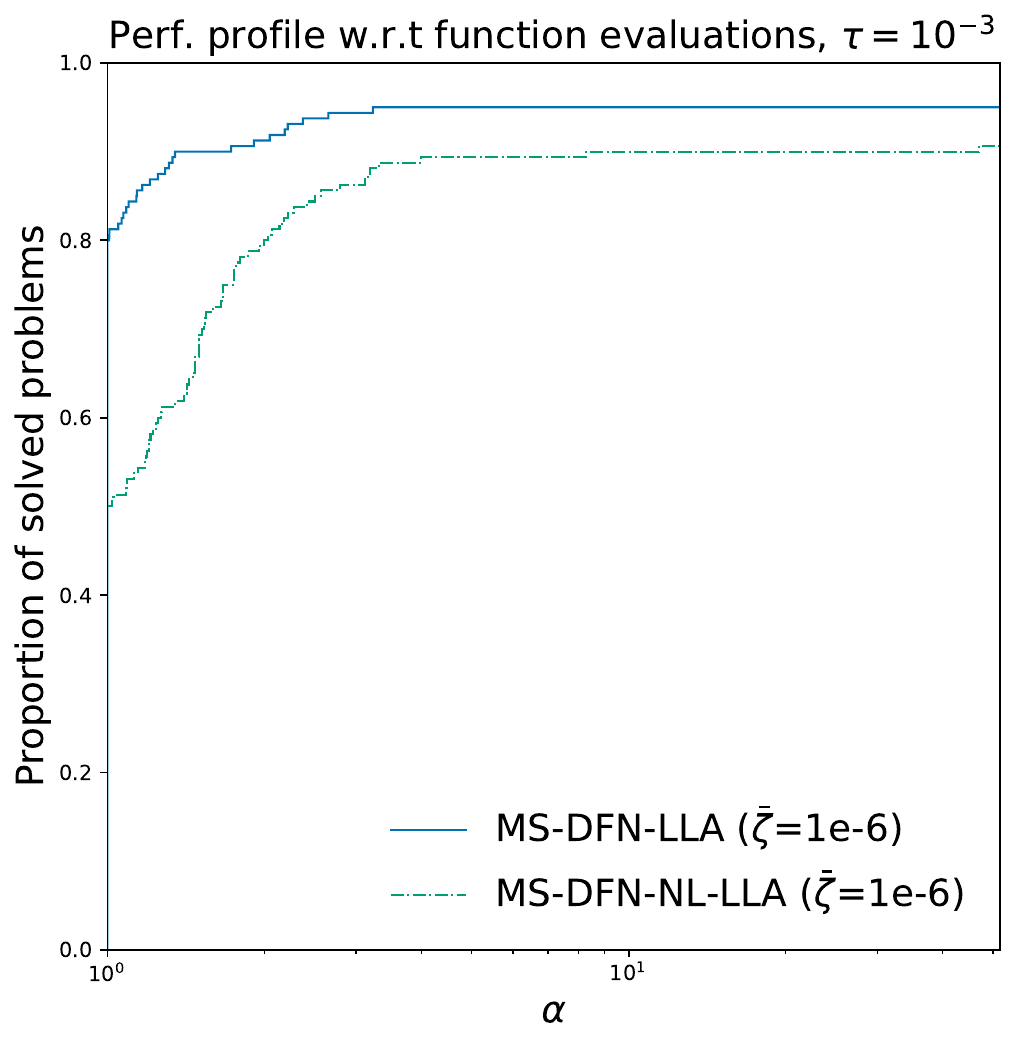} &
        \includegraphics[width=0.225\textwidth]{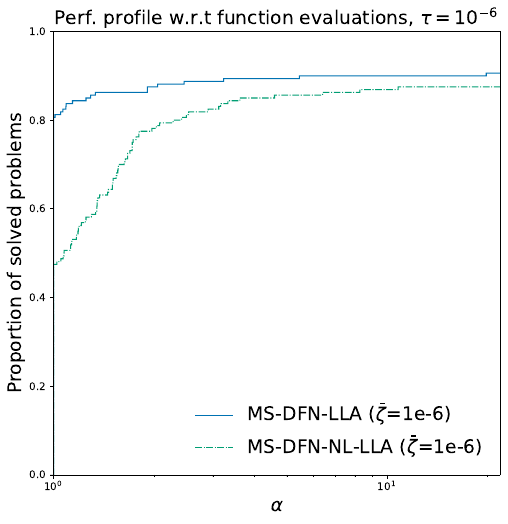}
    \end{tabular}
    \caption{Data profiles (top row) and performance profiles (bottom row) for $\bar\zeta = 10^{-6}$. Left half: MS-DFN-LLA (multiple step sizes) vs.\ DFN-LLA (single step size). Right half: MS-DFN-LLA (with extrapolation) vs.\ MS-DFN-NL-LLA (without). Odd columns: $\tau = 10^{-3}$; even columns: $\tau = 10^{-6}$.}
    \label{fig:results_piualfa_linesearch}
\end{figure}

As shown in Figure~\ref{fig:results_piualfa_linesearch} (left half), maintaining a memory of the step sizes for each direction and regenerating the orthonormal basis only after sufficient exploration improves performance. The multiple step size strategy better exploits direction-dependent information about the objective, yielding more efficient exploration. Consequently, all subsequent comparisons use the multiple step size variant (MS-DFN-*).

\subsection{Effect of Projected Extrapolation}\label{subsec:linesearch}

The comparison between MS-DFN-LLA and its variant without linesearch extrapolation (MS-DFN-NL-LLA) reveals the contribution of the extrapolation procedure. As before, results are reported for a lower-level target precision of $\bar{\zeta} = 10^{-6}$, as similar trends were observed across all precision levels.

Figure~\ref{fig:results_piualfa_linesearch} (right half) shows that the extrapolation phase is beneficial: by taking larger steps when descent is detected, it keeps step sizes larger for more iterations, delaying the reduction of $\zeta$ and allowing more progress with coarser (cheaper) lower-level solutions.

\subsection{Comparison of fixed vs.\ adaptive accuracy}

This subsection addresses the central question of the computational study: quantifying the benefit of dynamic accuracy adaptation compared to always solving the lower-level problem to the target precision. To this end, we compare the two strategies described in Section~\ref{subsec:algo_impl}, namely MS-DFN-LLF and MS-DFN-LLA, across three KKT levels: $10^{-3}$, $10^{-6}$, $10^{-9}$. Figure~\ref{fig:results_adaptive} collects all data and performance profiles for $\tau \in \{10^{-3}, 10^{-6}\}$.

Already at the coarsest tolerance ($\bar\zeta = 10^{-3}$, top row), the adaptive strategy shows an advantage that grows at intermediate ($\bar\zeta = 10^{-6}$, middle row) and high precision ($\bar\zeta = 10^{-9}$, bottom row), where it avoids solving the lower-level to extreme accuracy during early iterations. 

\begin{figure}[!htbp]
    \centering
    \setlength{\tabcolsep}{1pt}
    \begin{tabular}{@{}cccc@{}}
        \includegraphics[width=0.225\textwidth]{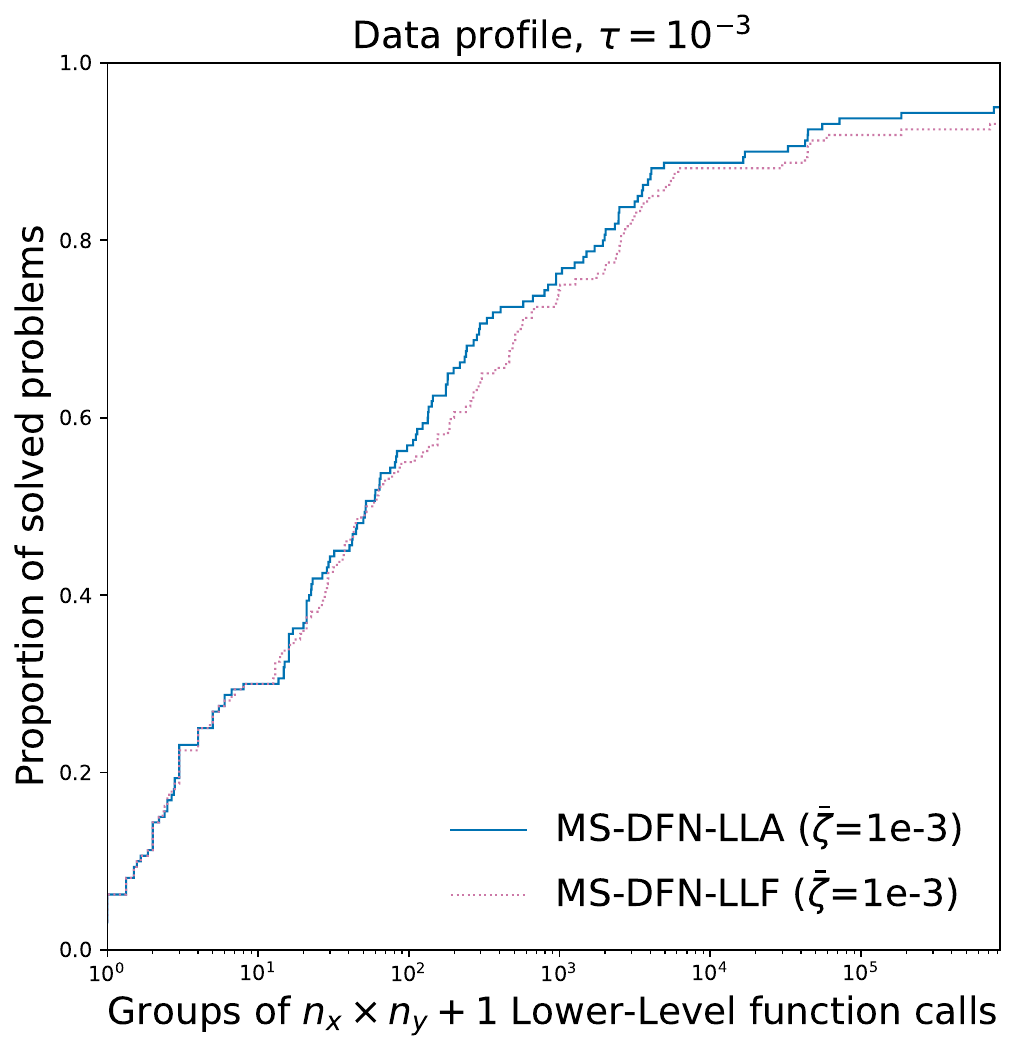} &
        \includegraphics[width=0.225\textwidth]{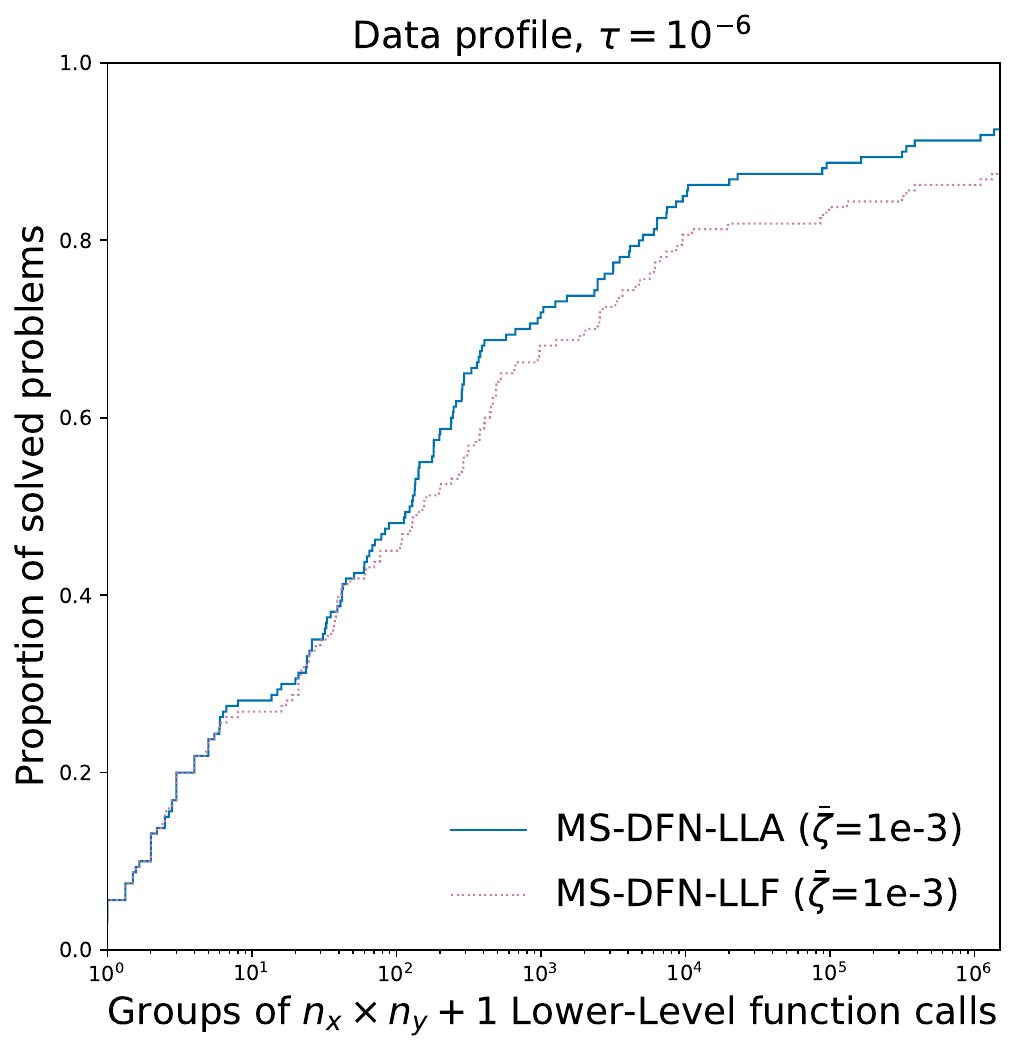} &
        \includegraphics[width=0.225\textwidth]{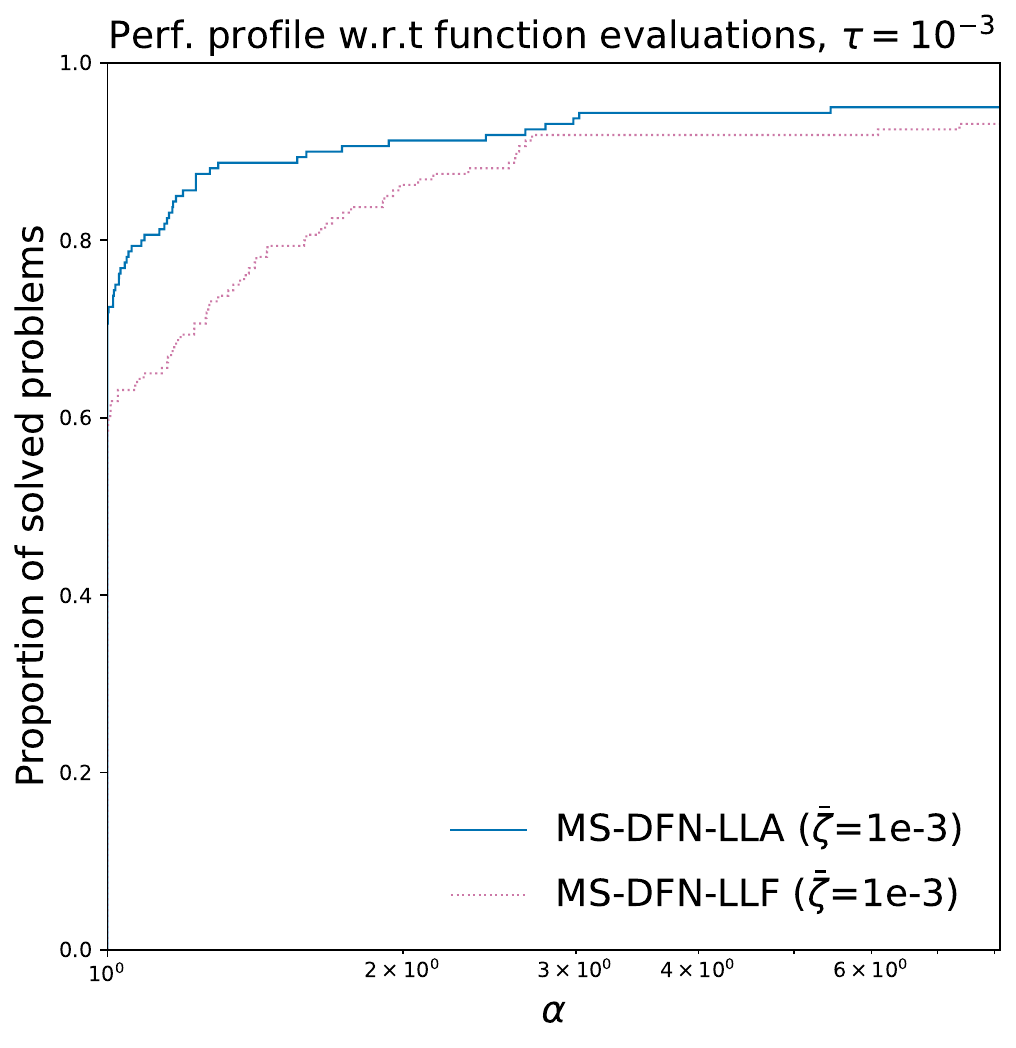} &
        \includegraphics[width=0.225\textwidth]{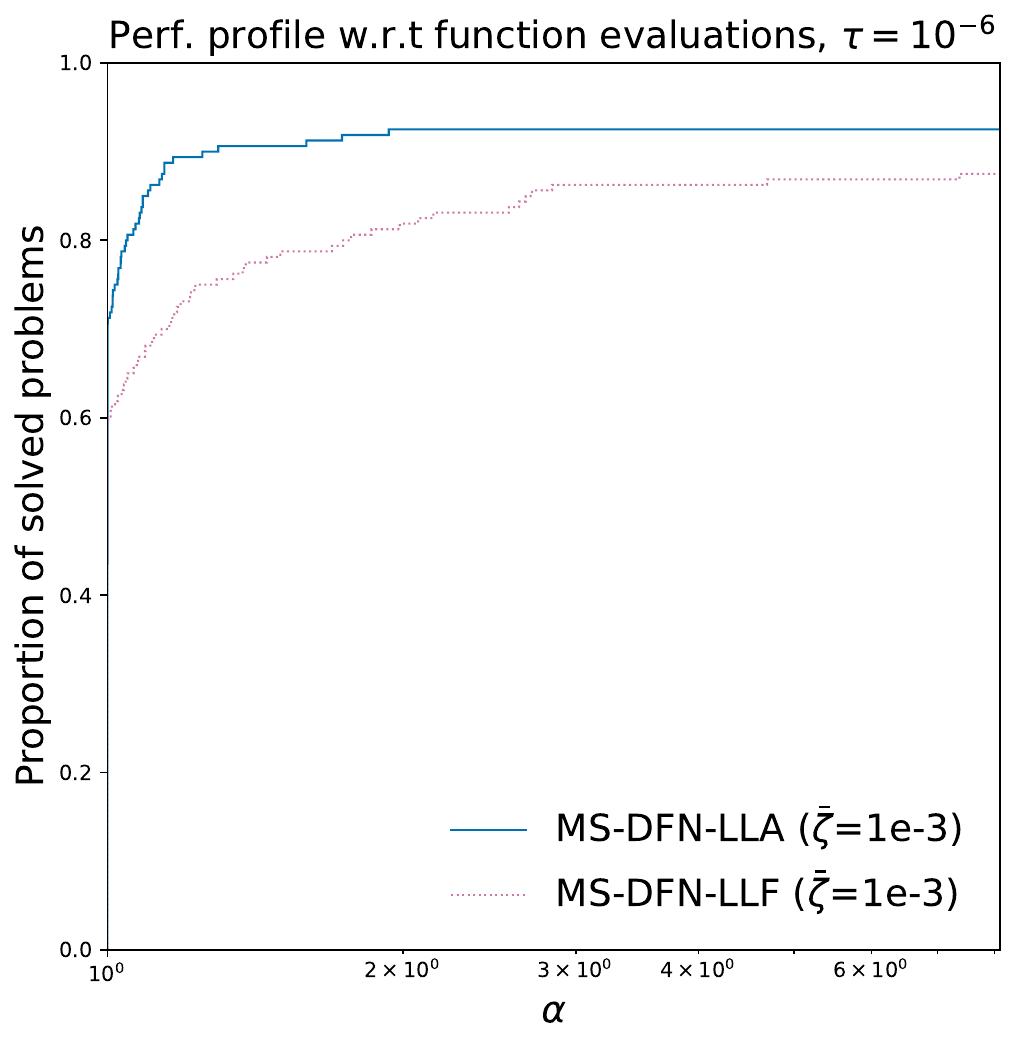} \\[-2pt]
        \includegraphics[width=0.225\textwidth]{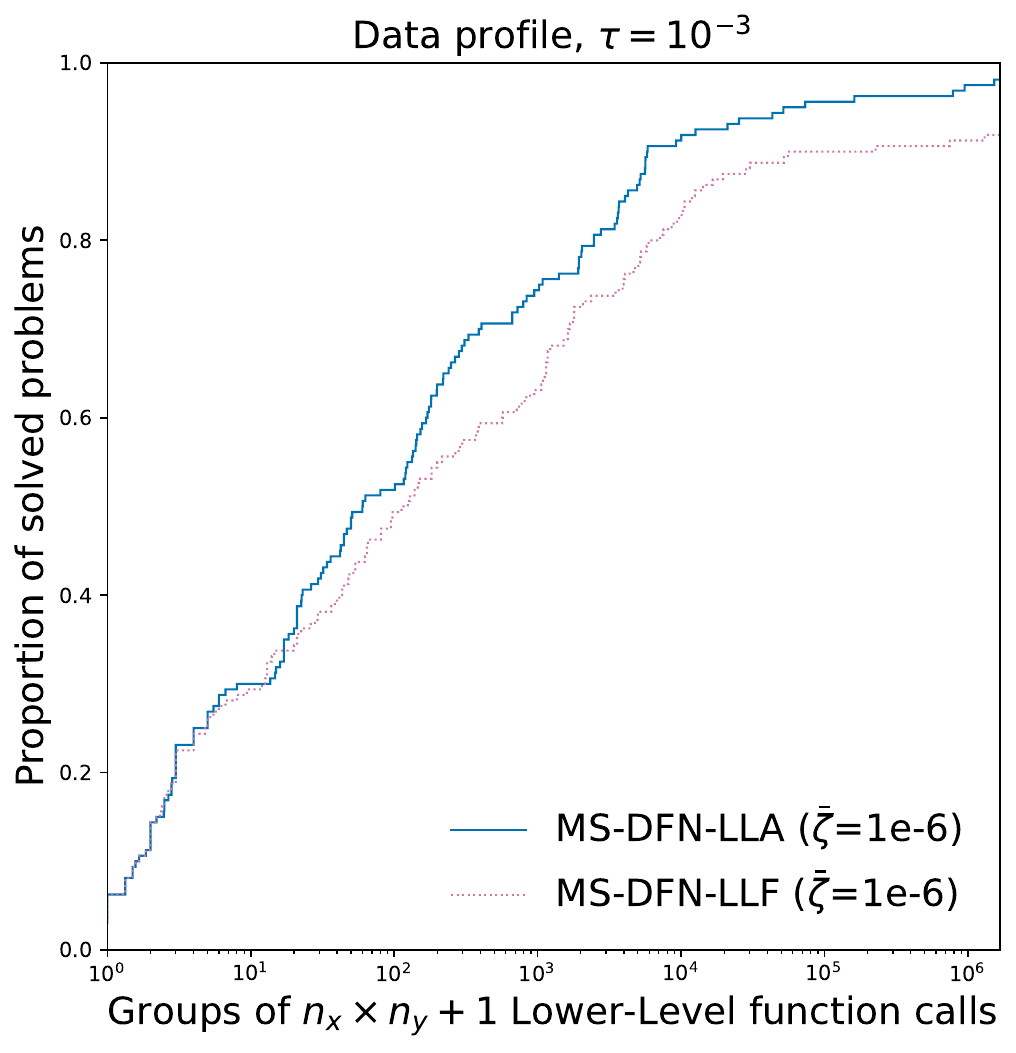} &
        \includegraphics[width=0.225\textwidth]{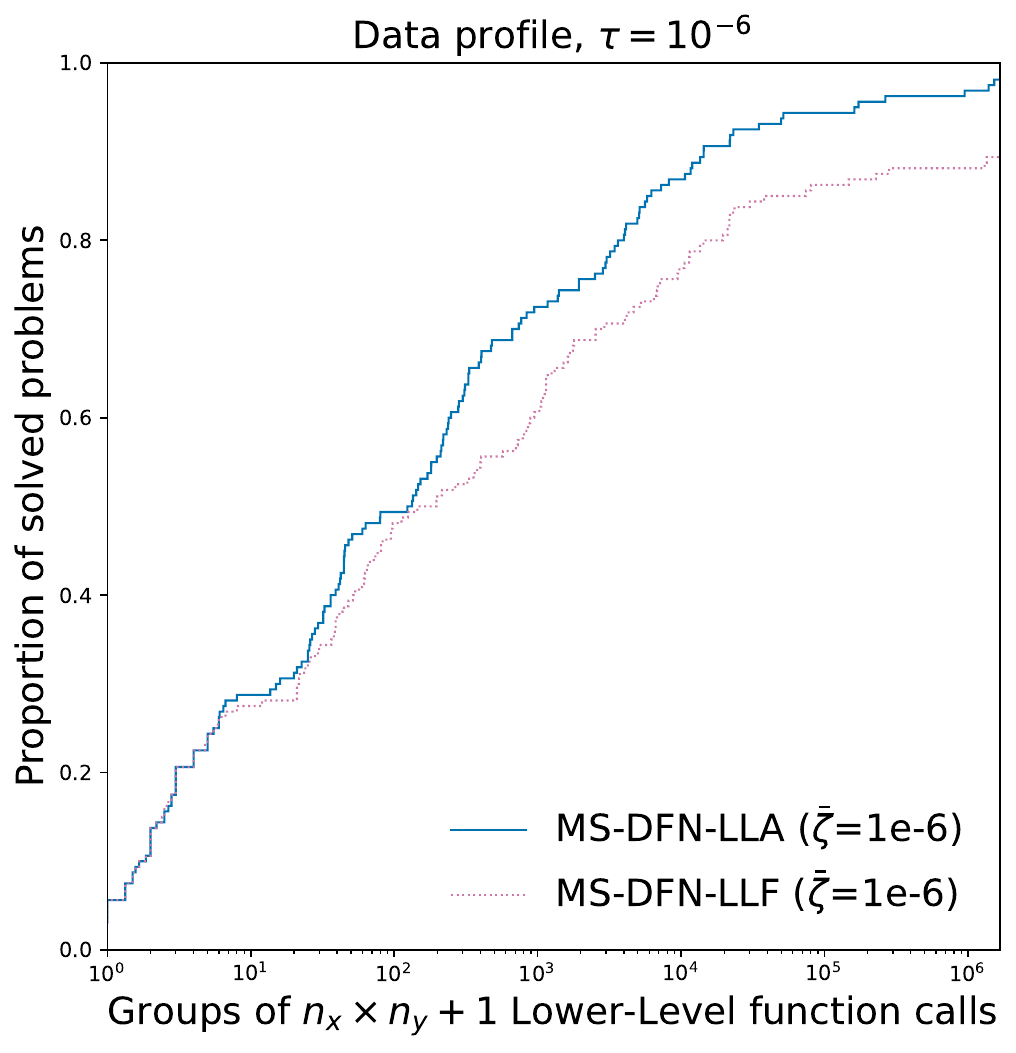} &
        \includegraphics[width=0.225\textwidth]{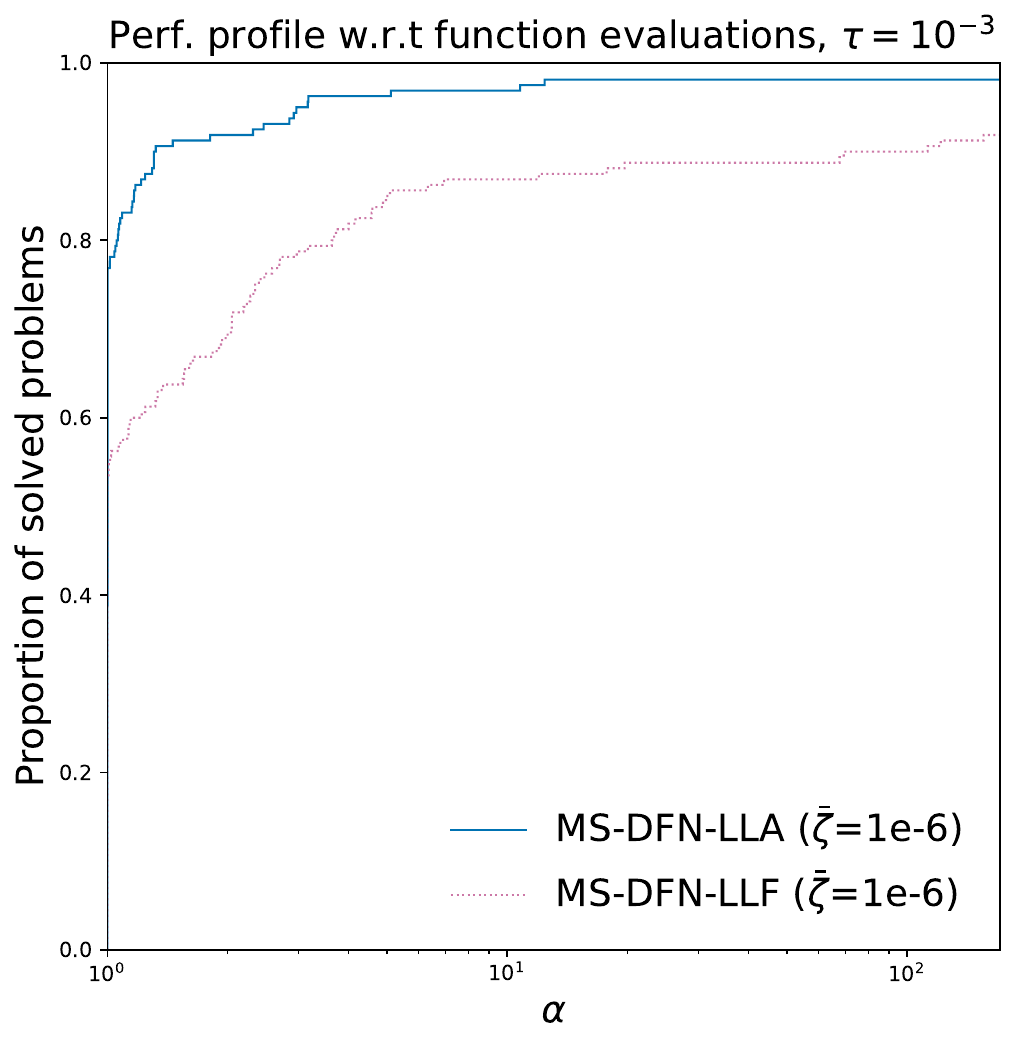} &
        \includegraphics[width=0.225\textwidth]{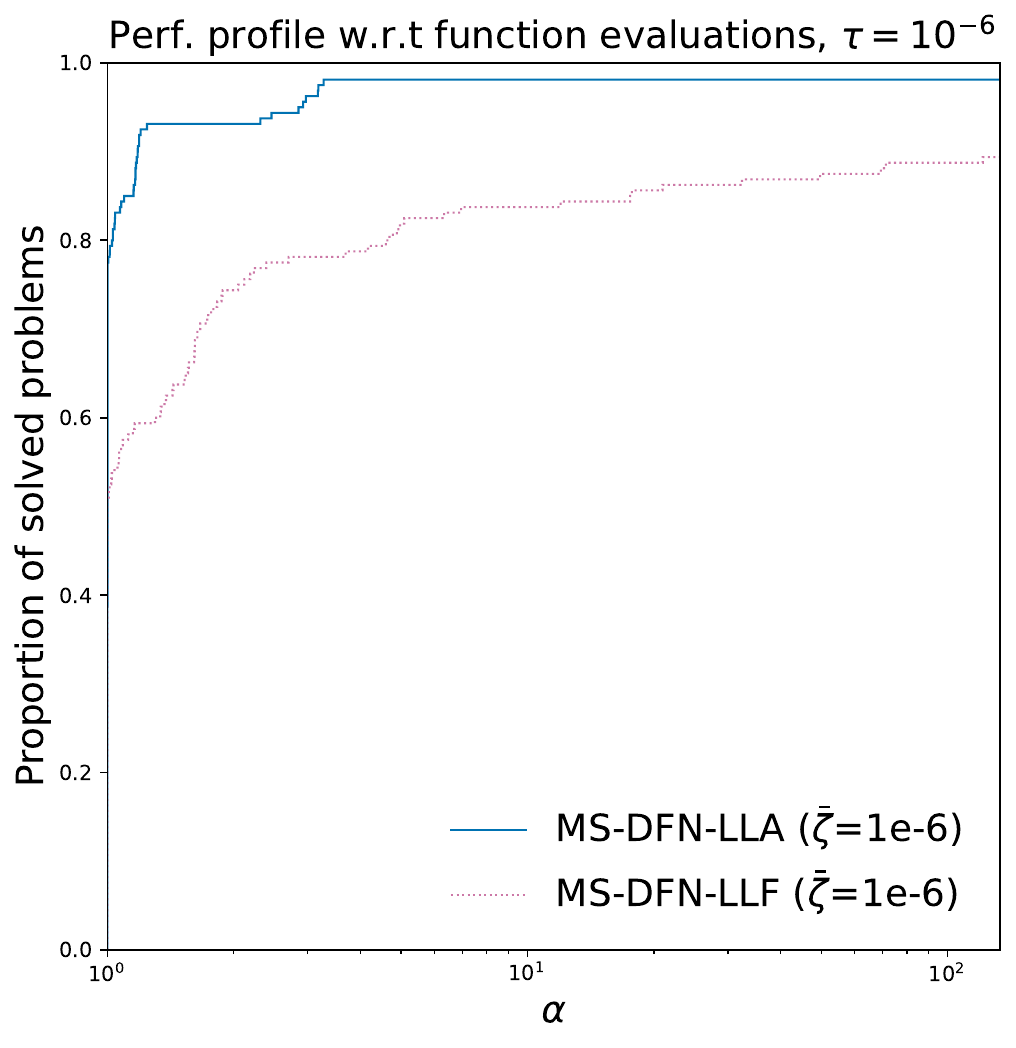} \\[-2pt]
        \includegraphics[width=0.225\textwidth]{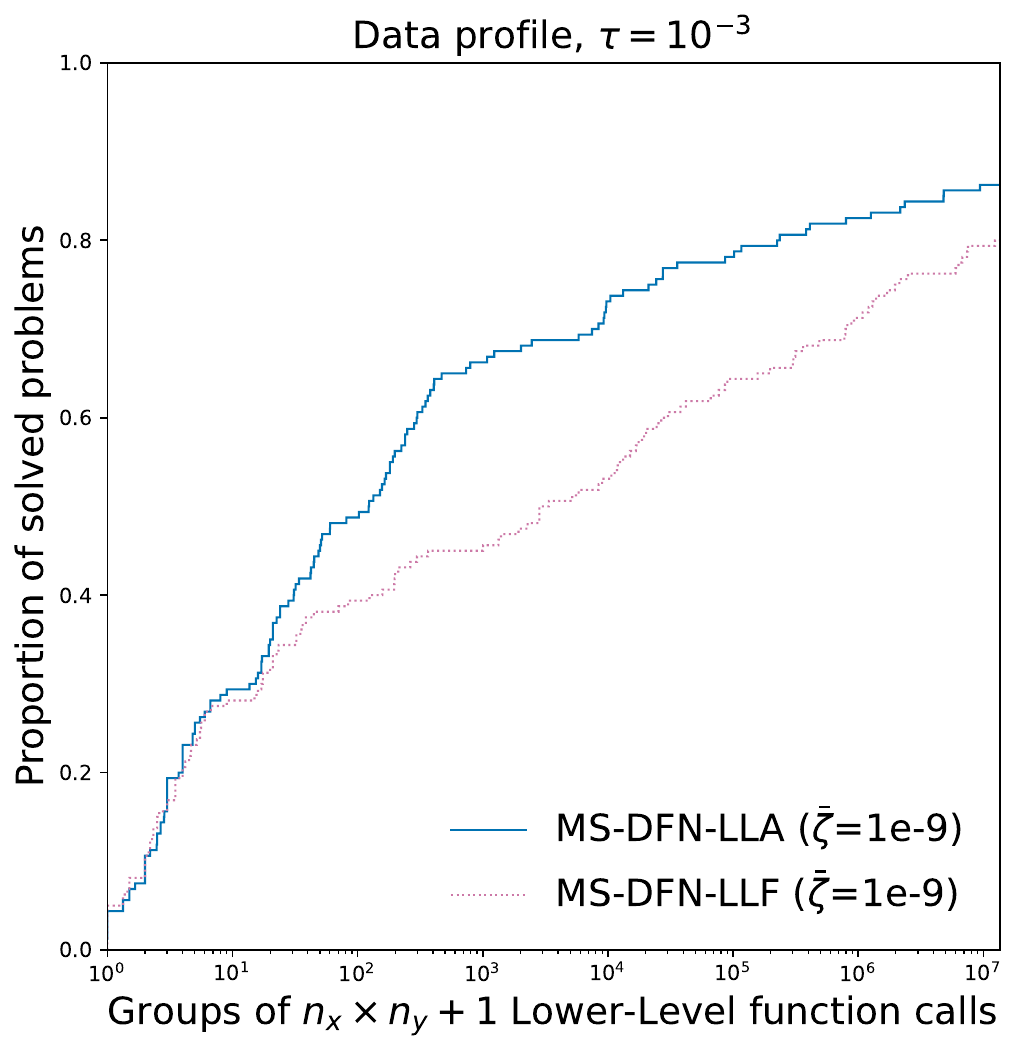} &
        \includegraphics[width=0.225\textwidth]{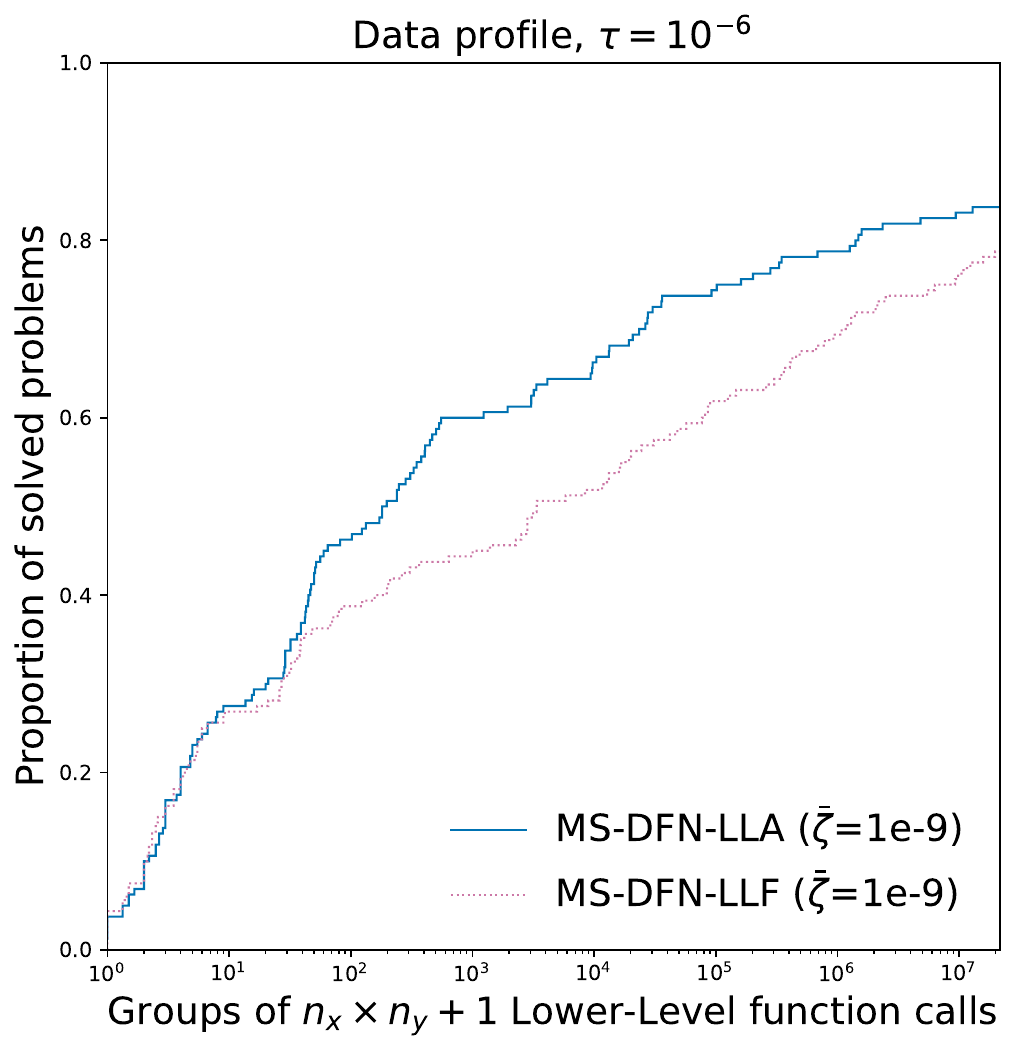} &
        \includegraphics[width=0.225\textwidth]{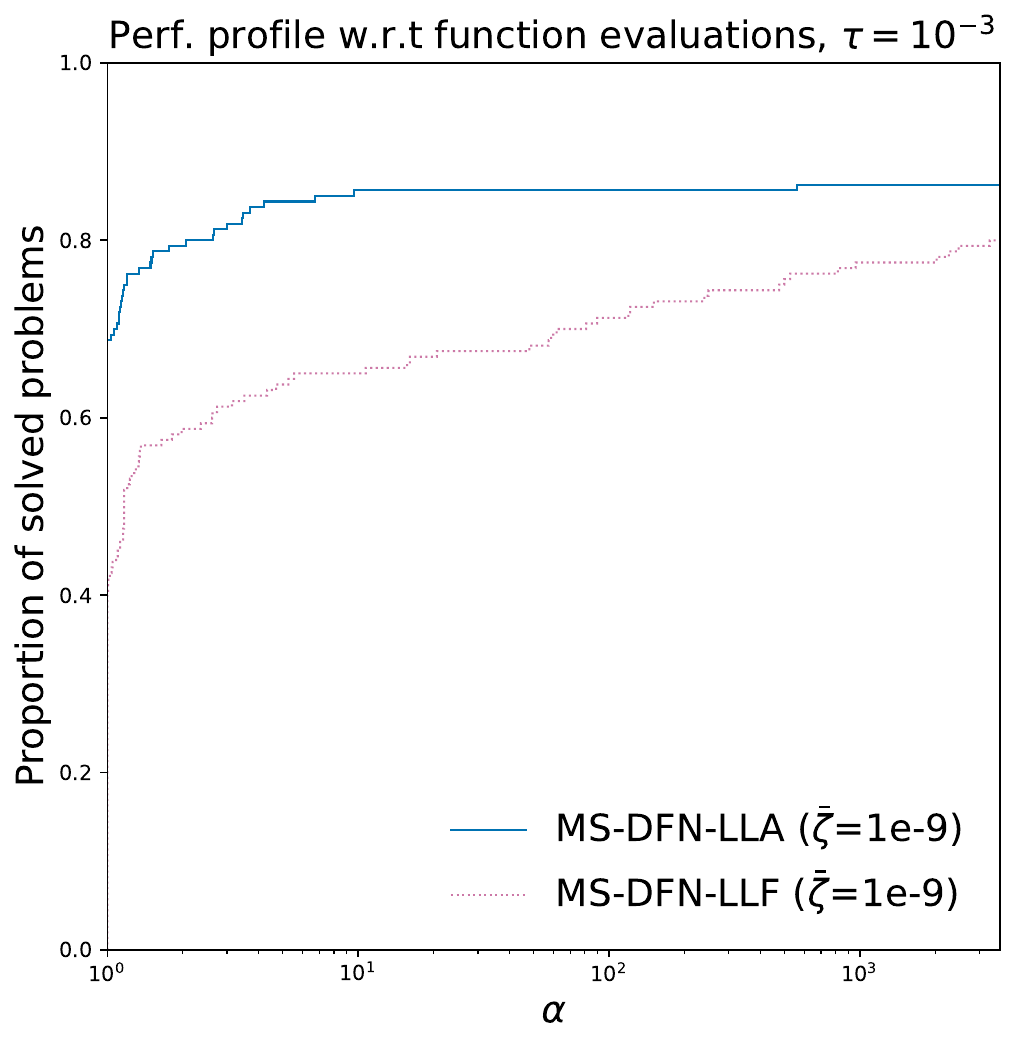} &
        \includegraphics[width=0.225\textwidth]{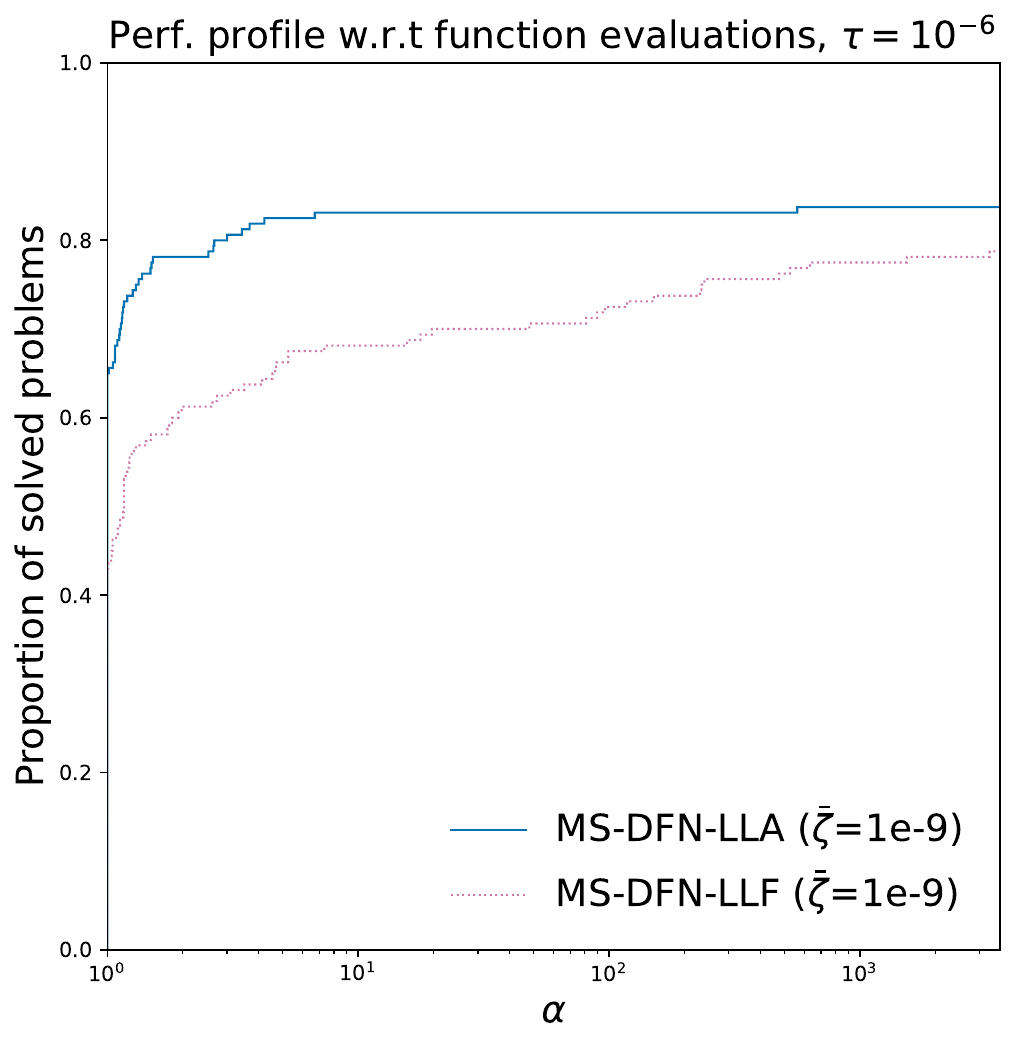}
    \end{tabular}
    \caption{Data profiles (columns 1--2) and performance profiles (columns 3--4) comparing MS-DFN-LLF (fixed accuracy) with MS-DFN-LLA (adaptive accuracy). Rows correspond to $\bar\zeta = 10^{-3}$ (top), $10^{-6}$ (middle), $10^{-9}$ (bottom). Odd columns: $\tau = 10^{-3}$; even columns: $\tau = 10^{-6}$.}
    \label{fig:results_adaptive}
\end{figure}

To further illustrate the practical impact of adaptive accuracy, the \textit{Optimal Control} problem from the BOLIB collection~\cite{Zhou2020} is considered, configured with $n_y = 72$ lower-level variables so that each lower-level solve is computationally expensive. Table~\ref{tab:optcontrol} reports the performance of MS-DFN-LLA and MS-DFN-LLF for target tolerances.

\begin{table}[!htbp]
\centering
\small
\setlength{\tabcolsep}{4pt}
\begin{tabular}{l@{\;\;}cc@{\;\;}cc@{\;\;}cc}
\toprule
& \multicolumn{2}{c}{$\bar{\zeta} = 10^{-3}$}
& \multicolumn{2}{c}{$\bar{\zeta} = 10^{-6}$}
& \multicolumn{2}{c}{$\bar{\zeta} = 10^{-9}$} \\
\cmidrule(lr){2-3} \cmidrule(lr){4-5} \cmidrule(lr){6-7}
& LLA & LLF
& LLA & LLF
& LLA & LLF \\
\midrule
$n_f$ at best $F$
& 618   & 1\,019
& 3\,065 & 6\,289
& 1\,929 & -- \\

CPU time (s)
& 598   & 1\,033
& 3\,232 & 5\,854
& 2\,045 & -- \\

Best $F$
& 0.52852426 & 0.52799624
& 0.52948840 & 0.52948872
& 0.52949427 & -- \\


KKT residual
& $4.20 \times 10^{-4}$ & $9.08 \times 10^{-4}$
& $9.99 \times 10^{-7}$ & $9.44 \times 10^{-7}$
& $6.87 \times 10^{-8}$ & -- \\
\bottomrule
\end{tabular}
\caption{Performance of MS-DFN-LLA and MS-DFN-LLF on the Optimal Control problem ($n_y = 72$) for three target tolerances. $n_f$: number of lower-level function evaluations at the best upper-level objective found. A dash (--) indicates that no valid solution was returned.}
\label{tab:optcontrol}
\end{table}

For $\bar{\zeta} = 10^{-3}$, the adaptive strategy reaches its best objective in $618$ lower-level evaluations versus $1\,019$ for the fixed approach, reducing CPU time from $1\,033$\,s to $598$\,s; the adaptive method also achieves a smaller KKT residual.
For $\bar{\zeta} = 10^{-6}$, the gap widens: convergence requires $3\,065$ versus $6\,289$ evaluations, with nearly identical objective values (matching to six digits).
For $\bar{\zeta} = 10^{-9}$, the fixed strategy fails to produce a solution, while the adaptive approach converges to a point with KKT residual of order $10^{-8}$.

The experimental results demonstrate that the benefit of dynamic error adaptation is already evident at coarse precision levels and increases as the required precision becomes more stringent.

\section{Conclusions}\label{sec:conclusions}

We proposed a derivative-free algorithmic framework for bilevel optimization in which upper-level objective and constraint functions are black boxes. It relies on a single-level reformulation via the optimal lower-level reaction, combined with a linesearch-based direct-search method. A key feature is the adaptive accuracy strategy: rather than solving the lower-level to a fixed tolerance, the algorithm couples the lower-level tolerance~$\zeta_k$ with the step-size parameters, starting with a relaxed accuracy and tightening it as convergence progresses.
We proved that when $\zeta_k \to 0$ the algorithm's limit points are Clarke-Jahn stationary, and when $\zeta_k \geq \bar\zeta > 0$ convergence to approximate stationary points holds in a Goldstein sense. These results were extended to nonlinear upper-level constraints via exact penalty, yielding convergence to Clarke-KKT points under suitable constraint qualifications.
The computational study on 160~BOLIB problems confirmed that the adaptive strategy consistently outperforms the fixed-accuracy variant across all tolerance levels, with the advantage growing as precision requirements increase. At $\bar\zeta = 10^{-9}$ the fixed strategy often fails, while the adaptive one remains viable, showing that dynamic accuracy management may be essential at high precision.
Future work includes analysis under smoothness assumptions on the upper-level objective and extension to MADS-type schemes.

\section*{Acknowledgments}

E. Cesaroni acknowledges financial support from the European Union – Next Generation EU, Mission 4 Component 1 CUP B53C23001740006.

\bibliographystyle{siamplain}
\bibliography{bilevel}

\end{document}